%(18/05/2025) Correction because of base change and finite generation
%(07/04/2025) The cokernel of N->\tau^* N rather tau(\fj)-power torsion.
\documentclass{scrarticle}

\usepackage[colorlinks=true,linkcolor=teal,citecolor=teal,hypertexnames=false]{hyperref}
\usepackage[margin=2.5cm]{geometry}
\usepackage{amsmath,amssymb,amsthm}
\usepackage{enumitem}
\usepackage{comment}
\usepackage{microtype}
\usepackage[english]{babel}
\usepackage[utf8]{inputenc}
\usepackage{mathrsfs}
\usepackage{bbm}

\usepackage{tikz}
\usetikzlibrary{matrix,arrows}
\usepackage{tikz-cd}
\usetikzlibrary{cd}
\usepackage{comment}

%Change to straight letters in math mode
\mathcode`A="7041 \mathcode`B="7042 \mathcode`C="7043 \mathcode`D="7044
\mathcode`E="7045 \mathcode`F="7046 \mathcode`G="7047 \mathcode`H="7048
\mathcode`I="7049 \mathcode`J="704A \mathcode`K="704B \mathcode`L="704C
\mathcode`M="704D \mathcode`N="704E \mathcode`O="704F \mathcode`P="7050
\mathcode`Q="7051 \mathcode`R="7052 \mathcode`S="7053 \mathcode`T="7054
\mathcode`U="7055 \mathcode`V="7056 \mathcode`W="7057 \mathcode`X="7058
\mathcode`Y="7059 \mathcode`Z="705A

%For fun caligraphic letters
\usepackage{calrsfs}
\DeclareMathAlphabet{\pazocal}{OMS}{zplm}{m}{n}

\newcommand\bF{\mathbb F}
\newcommand\bZ{\mathbb Z}
\newcommand\bN{\mathbb N}

\newcommand\C{\mathbb C}
\newcommand\ZZ{\mathbb Z}
\newcommand\FF{\mathbb F}
\newcommand\Fq{\FF_q}
\newcommand\Spec{\mathrm{Spec}~}
\newcommand\coeff{\mathrm{coeff}}

\newcommand{\CI}{\C_{\infty}}

\newcommand\cO{\mathcal O}

\newcommand\Frac{\operatorname{Frac}}

\newcommand\End{\operatorname{End}}
\newcommand\id{\operatorname{id}}
\newcommand\Frob{\operatorname{Frob}}
\newcommand\Hom{\mathrm{Hom}}
\newcommand\co{\mathrm{co}}

\newcommand\Mat{\operatorname{Mat}}
\newcommand{\Restau}{\operatorname{Res}_{\tau}}
\newcommand{\tRestau}{\widetilde{\operatorname{Res}}_{\tau}}
\newcommand\Ga{\mathbb{G}_a}

%% Andreas shortcuts:
  % notation for the t-motive
  % notation for the dual t-motive
\newcommand{\carl}{\mathsf{C}}

\newcommand{\KI}{K_\infty}

 % completion of \mot w.r.t v_\sigma
  % standard \cO-lattice generated by fixed basis of \mot

\newcommand{\ka}{\Omega_{A/\mathbb{F}}}

\newcommand{\ps}[1]{[\![#1]\!]}  % brackets for power series 
\newcommand{\ls}[1]{(\!(#1)\!)}% brackets for laurent series
 % brackets for convergent series

\renewcommand{\sp}[1]{[#1]} % brackets for skew polynomial ring
\newcommand{\sps}[1]{[\![#1]\!]} % brackets for skew power series ring
% brackets for skew laurent series
\newcommand{\perf}{\mathrm{perf}}
\newcommand{\dk}{\check{\kappa}} % "dual kappa" for basis of \dumot.
\newcommand{\de}{\check{e}} % "dual e" for K[t]-basis of \dumot.

\newtheorem{Theorem}{Theorem}[section]
\newtheorem{Lemma}[Theorem]{Lemma}

\newtheorem{Conjecture}[Theorem]{Conjecture}
\newtheorem{Proposition}[Theorem]{Proposition}
\newtheorem{Corollary}[Theorem]{Corollary}
\theoremstyle{definition}
\newtheorem{Definition}[Theorem]{Definition}

\newtheorem*{Definition*}{Definition}
\newtheorem{Remark}[Theorem]{Remark}
\newtheorem*{rem*}{Remark}

\setlength{\floatsep}{1cm}

% for writing a primed sum, this provides the command \psum
\makeatletter
\newcommand{\psum}{%
  \DOTSB\mathop{%
    \smash{\sideset{}{'}\sum}%
    \vphantom{\sum}%
  }\slimits@
}
\makeatother

\title{Pairing Anderson motives via formal residues in the Frobenius endomorphism}
\date{}
\author{Quentin Gazda\footnote{Sorbonne Universit\'e and Universit\'e Paris Cit\'e, CNRS, IMJ-PRG, 75005 Paris, \emph{Email:} \texttt{quentin@gazda.fr}}, Andreas Maurischat\footnote{RWTH Aachen University, 52072 Aachen, \emph{Email:} \texttt{maurischat@combi.rwth-aachen.de} \\ The second author had received support from the SFB-TRR 195 “Symbolic Tools in Mathematics and their Application” of the German Research Foundation (DFG).}}

\begin{document}

\maketitle

\setcounter{tocdepth}{1}

\begin{abstract}
{\small
Anderson modules form a generalization of Drinfeld modules and are commonly understood as the counterpart of abelian varieties but with function field coefficients. In an attempt to study their ``motivic theory'', two objects of semilinear algebra are attached to an Anderson module: its \emph{motive} and its \emph{dual motive}. While the former is better suited to follow the analogy with Grothendieck motives, the latter has proven much useful in the study of transcendence questions in positive characteristic.\\
Despite sharing similar definitions, the relationship between motives and dual motives has remained nebulous. Over perfect fields, it was only proved recently by the second author that the finite generation of the motive is equivalent to the finite generation of the dual motive, answering a long-standing open question in function field arithmetic (the ``abelian equals $A$-finite'' theorem). \\
This work constructs a perfect pairing among the motive and the dual motive of an Anderson module, with values in a module of differentials, thus answering a question raised by Hartl and Juschka. Our construction involves taking the residue of certain formal power series in the Frobenius endomorphism. Although it may seem peculiar, this pairing is natural and compatible with certain base change. It also comes with several new consequences in function field arithmetic; for example, we generalize the ``abelian equals $A$-finite'' theorem to all perfect algebras. 
}
\end{abstract}

\tableofcontents
\vspace{1cm}

\section{Introduction}
\subsection{Context}
Let $\mathbb{F}$ be a finite field with $q$ elements. Let $C$ be a geometrically irreducible smooth projective curve over $\mathbb{F}$ and $\infty$ a closed point on $C$. We consider the $\mathbb{F}$-algebra $A$ of functions on $C$ that are regular away from $\infty$.\\
Unlabeled tensor products will always be over $\bF$, and unlabeled $\Hom$-sets will always be homomorphisms of $\bF$-vector spaces or $\bF$-algebras.

Generalizing the pioneering work of Drinfeld, Anderson introduced certain $A$-module schemes which serve as analogues of abelian varieties in function fields arithmetic, but with $A$ as the coefficient ring instead of $\mathbb{Z}$. To an Anderson $A$-module $E$ over an $A$-algebra base $R$, Anderson attaches two objects from semilinear algebra: primarily, its \emph{$A$-motive} $M(E)$ which corresponds to the $A\otimes R$-module of homomorphisms from $E$ to $\mathbb{G}_a$ as $\mathbb{F}$-vector spaces schemes; it acquires a left action of the Frobenius $\tau$ of $\mathbb{G}_a$. In unpublished work reproduced in \cite{abp}, Anderson also attaches the \emph{dual $A$-motive} $N(E)$ which rather consists in homomorphisms from $\mathbb{G}_a$ to $E$. Similarly, $\underline{N}(E)$ acquires a right action of $\tau$. We refer to Section~\ref{sec:objects-from-FF-arithmetic} for details. \\

Despite their similar definitions, the relation between $M(E)$ and $N(E)$ as modules over the ring $A\otimes R$ is quite subtle. When $R$ is a perfect field, it was only proved recently by the second author that the finite generation of the former amounts to that of the latter~\cite{maurischat}. Under the additional assumption 
that $R=\mathbb{C}_{\infty}$\footnote{We remind the reader unfamiliar with function field arithmetic notations that $\mathbb{C}_{\infty}$ denotes the completion of an algebraic closure of $\Frac(A)$ at the place $\infty$.}
 is a complete algebraically closed $A$-algebra, Hartl and Juschka \cite{hartl-juschka} further showed the 
existence of a perfect pairing among $\tau^* M(E)$ and $N(E)$ with values in the module of differentials of $A\otimes_{\mathbb{F}} \CI$ 
over $\CI$, thereby establishing the isomorphism class of $M(E)$ in terms of that of $N(E)$. In \emph{loc.\,cit.}, 
the authors asked whether it is possible to give an explicit definition of this pairing (\emph{cf}. \cite[Question 2.5.15]{hartl-juschka}). \\

One aim of this text is to answer Hartl and Juschka's question in giving a canonical construction of the pairing they introduced. 
We provide such an answer for perfect $A$-algebras $R$, and not only $R=\CI$.

\subsection{Main construction and results}
Let $R$ be an $A$-algebra with structure morphism $\iota$. To fully appreciate our main contribution, we recall the definition of Anderson modules as generalized by Hartl in \cite{hartl}.
\begin{Definition}\label{def:anderson-modules}
An \emph{Anderson $A$-module of dimension $d$ over $R$} is a smooth affine $A$-module scheme $E$ over $R$ having the following properties:
\begin{enumerate}
\item there is a faithfully flat ring homomorphism $R\to S$ for which the base change $E\times_R S$ is isomorphic to the $d$th power of the additive $\mathbb{F}$-vector space scheme over~$S$;
\item\label{item:Lie-compatible} For any $a\in A$, $\mathrm{Lie}_E (a)-\iota(a)$ seen as an endomorphism of the $R$-module $\mathrm{Lie}_E(R)$ is nilpotent. 
\end{enumerate}
\end{Definition}

Drinfeld modules are instances of Anderson $A$-modules of dimension one.\\

Given an Anderson $A$-module $E$ over $R$, we may consider the following two groups of $\mathbb{F}$-vector space scheme homomorphisms over $R$:
\[
M(E):=\Hom_{\mathbb{F}}(E,\mathbb{G}_a) \quad \text{and} \quad N(E):=\Hom_{\mathbb{F}}(\mathbb{G}_a,E).
\] 
Both are naturally $A\otimes R$-modules where $A$ acts on $E$ and $R$ acts on $\mathbb{G}_a$. The former is usually referred to \emph{the motive of $E$} and the latter to \emph{the dual motive of $E$}. In order to avoid this confusing terminology, we rather use the naming \emph{comotive of $E$} for $N(E)$. We shall say that $E$ is \emph{abelian} if $M(E)$ is finite projective\footnote{If we use the terminology ``finite'' for modules, we always mean ``finitely generated''.} over $A\otimes R$; respectively, we say that $E$ is \emph{$A$-finite} (or, for consistency, \emph{coabelian}) if $N(E)$ is finite projective.\\

We denote by $\ka$ the module of K\"ahler differentials of $A$ over $\mathbb{F}$. Our main result is the following theorem.
\begin{Theorem}\label{thm:main}
Let $R$ be a perfect $A$-algebra and let $E$ be an abelian (respectively coabelian) Anderson $A$-module over $R$. There is a natural $A\otimes R$-linear perfect pairing
\begin{equation}\label{eq:intro-pairing}
\tau^*M(E)\otimes_{A\otimes R} N(E)\longrightarrow \ka\otimes_{\mathbb{F}} R
\end{equation}
which is compatible with base change. In the case where $R=\mathbb{C}_{\infty}$, the induced isomorphism $N(E)\to \Hom(\tau^*M(E),\ka\otimes_{\mathbb{F}} R)$  is the inverse of Hartl-Juschka's $\Xi$-map (\emph{cf}. \cite[Theorem~2.5.13]{hartl-juschka}).
\end{Theorem}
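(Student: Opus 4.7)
The plan is to construct the pairing explicitly from the composition of an element of $M(E)$ with an element of $N(E)$, extract a formal residue in the Frobenius variable $\tau$, and then verify perfectness by a local computation. Given $m\in M(E)$ and $n\in N(E)$, the composite $m\circ n$ lies in $\End_{\mathbb F}(\Ga)_R = R\{\tau\}$, the skew polynomial ring. Since $R$ is perfect, the Frobenius $\tau$ acts invertibly on $R$, so $R\{\tau\}$ embeds into the skew Laurent-series ring $R\sls{\tau^{-1}}$. Every nonzero $\phi_a\in\End(E)$ has invertible leading term in $R\{\tau\}$ (as a matrix of twisted polynomials in the dimension $d$ case), and hence is invertible in the appropriate completion. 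Combining this with the $A$-structure of the motive allows one to form expressions of the shape $m\circ\phi_a^{-1}\circ n$ and to extract their coefficient of $\tau^{-1}$; assembling these as $a$ varies should produce, via the canonical residue pairing $\ka\otimes A\to\mathbb F$, $(da,b)\mapsto \operatorname{Res}_{\infty}(b\,da)$, an element of $\ka\otimes R$.

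The next step is to check that the so-defined map $M(E)\otimes_{\mathbb F} N(E)\to\ka\otimes R$ factors through $\tau^*M(E)\otimes_{A\otimes R} N(E)$. The factorization through the Frobenius pullback $\tau^*$ is forced by the fact that $\tau^{-1}$ does not commute past scalars: moving $\tau^{-1}$ past $r\in R$ produces $r^{1/q}$, which is precisely the Frobenius-semilinearity that collapses under $\tau^*$. The descent to the tensor product over $A\otimes R$ would follow from the defining identities $a\cdot m = m\circ\phi_a$ in $M(E)$ and $a\cdot n = \phi_a\circ n$ in $N(E)$: shifting $\phi_a$ across the inverse $\phi_b^{-1}$ inside the Laurent series produces an error term whose $\tau^{-1}$-residue matches the universal derivation $A\to\ka$, $a\mapsto da$.

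Once the pairing is constructed, naturality and compatibility with base change along perfect $A$-algebra extensions $R\to R'$ are formal, since the residue map and all constituent operations are preserved by tensor product. For perfectness, I would reduce to the case where $M(E)$ and $N(E)$ are free over $A\otimes R$ using faithfully flat descent along $R\to S$ in Definition~\ref{def:anderson-modules}, and invoke the paper's generalization of the abelian$=$coabelian theorem to ensure both motives are finite projective of equal rank simultaneously. The pairing matrix in a basis can then be computed in terms of the matrix of $\phi_t$ (or a system of generators of $A$); its determinant should come out to a unit times the class of a uniformizer-differential at $\infty$, establishing perfectness. The comparison with the Hartl--Juschka map $\Xi$ over $R=\CI$ then reduces to evaluating both pairings on the Carlitz motive $\carl$ (and tensor powers thereof), where explicit descriptions exist on both sides.

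The main obstacle I expect is verifying that the formal residue procedure is well-defined as a map valued in $\ka\otimes R$. The expression $m\circ\phi_a^{-1}\circ n$ lives in a non-commutative completed ring and the $\tau^{-1}$-residue, while $R$-linear, is only $A$-linear after the reorganisation alluded to above; moreover, its $M(E)$-dependence is only $\sigma$-semilinear, which is precisely what forces $\tau^*M(E)$ rather than $M(E)$ to appear on the left. Controlling these skew phenomena simultaneously, and showing they cohere into a well-defined $A\otimes R$-bilinear pairing valued in the commutative module $\ka\otimes R$, is the crux of the argument.
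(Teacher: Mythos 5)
Your overall idea---compose $m$ with $n$ through negative powers of $\Phi$, land in a skew Laurent series ring, and extract a residue in $\tau^{-1}$---is the same as the paper's construction of $\Restau$. But the proposal has several genuine gaps, one of which is fatal as written.

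\textbf{Circular reasoning in the perfectness argument.} You propose to ``invoke the paper's generalization of the abelian$=$coabelian theorem to ensure both motives are finite projective of equal rank simultaneously.'' That generalization (Corollary~\ref{cor:abelian=coabelian}) is a \emph{consequence} of the perfectness of the pairing: one starts with only one of $M(E)$, $N(E)$ known to be finite projective, and the perfect pairing is what forces the other to be so. You cannot presuppose the conclusion you are trying to prove. The paper instead proves perfectness by constructing, after a faithfully flat base change splitting $E$, an explicit section $\Xi_\kappa$ (Equation~\eqref{eq:expression-of-xi(eta)}, modeled on Hartl--Juschka) and verifying that it is a two-sided inverse to $\tRestau(-\otimes\star)$ (Proposition~\ref{prop:properties-of-Xi}); this is what simultaneously proves perfectness \emph{and} identifies the map with $\Xi^{-1}$.

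\textbf{The determinant approach does not obviously close the perfectness claim.} For a general Anderson module the rank $r$ of $M(E)$ exceeds the dimension $d$, so the $d\times d$ matrix $\Phi(t)$ does not directly give the $r\times r$ Gram matrix of the pairing; the explicit computation in Proposition~\ref{prop:computation-drinfeld}, even in the Drinfeld ($d=1$) case, produces infinite sums whose ``determinant'' is not visibly a unit. And verifying the $\Xi$-compatibility only on the Carlitz module and its tensor powers does not determine the natural transformation in general.

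\textbf{The residue pairing is misstated.} You write the duality as a pairing $\ka\otimes A\to\mathbb F$, $(da,b)\mapsto\operatorname{Res}_\infty(b\,da)$. For $a,b\in A$ the form $b\,da$ is regular away from $\infty$, hence $\operatorname{Res}_\infty(b\,da)=0$ by the residue theorem; that pairing is identically zero. What is needed is Poonen's residue duality between $\ka$ and $\KI/A$, identifying $\ka\otimes R$ with $\Hom^{\cont}_{\mathbb F}(\KI/A,R)$. Getting a continuous functional on all of $\KI/A$ (not just on elements of the form $\phi_a^{-1}$) requires first extending the $A$-action on $M\ls{\sigma}$ to a \emph{continuous} $\KI$-action (Corollary~\ref{cor:K-automorphisms}, Proposition~\ref{prop:continuous-Binfty-action}), which in turn rests on the topological comparison of Section~4 (the homeomorphism between $M\ls{\sigma}$ and $\mathcal{B}_\infty(M)$). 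This analytic input---that the $\sigma$-adic and $\infty$-adic topologies agree---is the technical heart of well-definedness and is absent from your proposal.
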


In answering Hartl and Juschka's question, we give an explicit construction of the map \eqref{eq:intro-pairing}. 

The rather surprising feature of the map \eqref{eq:intro-pairing} is that it can be interpreted as taking the ``residue'' of certain formal Laurent series in the Frobenius operator of $\mathbb{G}_a$. We refer to Section \ref{sec:residue-in-tau} for the detailed construction. \\

There are several practical applications of Theorem \ref{thm:main}; we compile some of them in Section~\ref{sec:applications}. For instance, we are able to generalize the second author's main theorem in \cite{maurischat} to a large class of $A$-algebras (see Corollary \ref{cor:abelian=coabelian}), offering at once an alternative proof.
\begin{Corollary}[Abelian equals coabelian]
Assume that $R$ is perfect. Then $M(E)$ is finite projective (of constant rank) over $A\otimes R$ if, and only if $N(E)$ is.
\end{Corollary}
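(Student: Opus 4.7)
The plan is to derive this corollary directly from Theorem~\ref{thm:main}: once the perfect pairing $\tau^*M(E)\otimes_{A\otimes R} N(E)\to \ka\otimes R$ is in hand, finite projectivity on one side must propagate to the other by duality. The key preliminary remark is that, since $R$ is perfect, the ring endomorphism $\tau\colon A\otimes R \to A\otimes R$ sending $a\otimes r$ to $a\otimes r^q$ is an automorphism. Consequently, extension of scalars along $\tau$ both preserves and reflects finite projectivity (and ranks) of $A\otimes R$-modules, so $M(E)$ is finite projective of constant rank if and only if $\tau^*M(E)$ is.

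Assuming first that $E$ is abelian, I would then invoke Theorem~\ref{thm:main} to obtain the isomorphism
\[
N(E)\xrightarrow{\sim}\Hom_{A\otimes R}\bigl(\tau^*M(E),\,\ka\otimes R\bigr).
\]
Since $A$ is the affine ring of a smooth projective curve over $\mathbb{F}$, the module $\ka=\Omega_{A/\mathbb{F}}$ is finite projective of rank one over $A$, whence $\ka\otimes R$ is finite projective of rank one over $A\otimes R$. The right-hand side of the display is therefore a finite projective $A\otimes R$-module of the same rank as $M(E)$, delivering the desired property for $N(E)$. The converse implication is strictly symmetric: assuming $E$ coabelian, perfectness of the pairing realizes $\tau^*M(E)$ as $\Hom_{A\otimes R}(N(E),\ka\otimes R)$, which is finite projective by the identical argument; descending along the automorphism $\tau^{-1}$ then yields that $M(E)$ itself is finite projective, of the same rank.

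The entire difficulty is thus absorbed into Theorem~\ref{thm:main}; after that, the corollary reduces to standard duality for finite projective modules, and to the observation that tensoring with an invertible $A\otimes R$-module preserves rank. The perfectness hypothesis plays exactly the role of making $\tau$ invertible, which is what allows us to move back and forth between a module and its Frobenius twist without cost. If one wanted to drop perfectness, the main obstruction would be precisely this passage, as $\tau^*M(E)$ finite projective no longer implies $M(E)$ finite projective in general.
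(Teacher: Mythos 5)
Your proof is correct and follows essentially the same route as the paper. In fact the paper does not give a separate proof of this corollary: the ``abelian implies coabelian'' direction is established inside the proof of Theorem~\ref{thm:res-perfect}, where the perfect pairing is used to identify $N$ with $(\tau^*M)^\vee\otimes_{A\otimes R}(\ka\otimes R)$, and the converse is handled by the symmetric argument (swapping left/right actions) that the paper signals at the start of Section~4; your observation that $\tau$ is an automorphism of $A\otimes R$ when $R$ is perfect, so that $\tau^*$ preserves and reflects finite projectivity, is precisely what makes the back-and-forth legitimate, and is the same point the authors rely on.
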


In Section \ref{sec:noncommut}, we study the non-commutative rings $R[\tau]$, $R[\tau,\tau^{-1}]$, $R\ps{\sigma}$ and $R\ls{\sigma}$, where $R$ is a perfect ring of characteristic $p$. The latter two play a key role in our considerations. This allows us to define topological modules $M\ls{\sigma}$ and $N\ls{\sigma}$, where $M$ and $N$ are the motive and the comotive of $E$; the necessary background on Anderson's \emph{zoo of objects} is recalled in Section \ref{sec:objects-from-FF-arithmetic}. In particular, we are able to show that the $\sigma$-adic topology on these modules coincide with the $\infty$-adic topology. Section \ref{sec:residue-in-tau} is devoted to the construction of the pairing \emph{residue-in-$\tau$} which hinges on a formal \emph{residue in $\tau$} map $R\ls{\sigma}\to R$ extracting the coefficient of $\tau^{-1}$. Using the bridge between these topologies, we are able to show its perfectness. 
Then, in Section \ref{sec:applications} we present some applications of our main result, such as ``abelian=$A$-finite'' statements, the equivalence between tensor of motives and dual motives up to isogeny, Barsotti--Weil formulas, and we also generalize the construction of twisted Anderson modules \emph{à la mode de Caen}. Last but not least, we compute some instances of the residue-in-$\tau$ pairing for Drinfeld modules, tensor powers of the Carlitz module and the so-called ``Maurischat example''. \\

\textbf{Acknowledgments.} The first author is much indept to Andres Fernandez\nobreakdash-Herrero and Laurent Moret-Bailly for enlightening discussions on forms of powers of $\mathbb{G}_a^d$ and the ill-behaviour of the comotive in presence of imperfect rings. Some results arising from those discussions were incorporated in subsection \ref{subsec:forms-of-Gad}.

\section{Noncommutative rings of functions of $\tau$}\label{sec:noncommut}
Let $\FF$ be the finite field with $q$ elements, $q$ a power of a prime $p$. 
Let $R$ be a commutative $\FF$-algebra. We let $R[\tau]$ be the skew polynomial ring over $R$ in a formal variable $\tau$ subject to the relation $\tau a=a^q \tau$ for all $a\in R$. This means, the elements of $R[\tau]$ are given by finite sums $a_0+a_1\tau+\ldots+a_r\tau^r$ for some $r\geq 0$ and coefficients $a_i\in R$, addition is coefficient wise, and multiplication is bilinear satisfying $\tau a=a^q \tau$, i.e.~
\[ \left( \sum_{i} a_i\tau^i\right)\cdot \left( \sum_{j} b_j\tau^j\right) = \sum_{k} \left( \sum_{i+j=k} a_ib_j^{q^i} \right) \tau^{k}.\]

The ring $R[\tau]$ plays a prominent role in the theory of $A$-motives because it identifies with the ring of endomorphisms of $\mathbb{G}_{a,R}$ as an $\mathbb{F}$-vector space scheme; seen as a functor,
\[
\mathbb{G}_{a,R}~:~\{R\text{-algebras}\}\longrightarrow \{\mathbb{F}\text{-vector~spaces}\},\quad S \longmapsto (S,+).
\]
It is well-known that the map $R[\tau]\to \End_{\mathbb{F}-\text{vs}/R}(\mathbb{G}_{a,R})$, $\tau\mapsto \Frob_{\mathbb{G}_a}$
is a ring isomorphism, where $\Frob_{\mathbb{G}_a}$ is the $q$-Frobenius on $\mathbb{G}_a$ (\emph{cf}. \cite[Lemma 3.2]{hartl}). 

\begin{Remark}\label{rem:tensor-left-right}
Due to its non-commutativity, one has to be quite careful when manipulating the ring $R[\tau]$. For instance, although $S\otimes_R R[\tau]\cong S[\tau]$, where $R$ acts on the left of $R[\tau]$, the ring $S[\tau]$ is generally not isomorphic to $R[\tau]\otimes_R S$ if now $R$ acts on the right of $R[\tau]$.
\end{Remark}

\subsection{Colimit perfection}
The ring $R[\tau]$ is much more well-behaved\footnote{If $R=k$ is an imperfect field then $k[\tau]$ is not noetherian as a right-module over itself. To see this, let $a\in k\setminus k^p$ and consider the increasing sequence $(I_s)_{s>0}$ of right-ideals whose $s$th term is $I_s=a\tau k[\tau]+\tau a\tau k[\tau]+\ldots+\tau^{s-1}a\tau k[\tau]$. Then it is a simple exercise to check that the inclusion $I_s\subset I_{s+1}$ is strict for all $s$. \label{foot:not-noetherian}} when $R$ is perfect. Hence a recurrent idea will be to pass to the \emph{perfection of $R$}. Recall that $R$ is called \emph{perfect} if the $p$-th power map on $R$ is bijective, or equivalently, if the $q$-th power map is bijective. There exists a functor $R\mapsto R^{\mathrm{perf}}$ defined as the left-adjoint of the inclusion $\{\text{perfect}~\mathbb{F}-\text{algebras}\}\subset \{\mathbb{F}-\text{algebras}\}$. This means in formula that for any perfect $S$, 
\begin{equation}\label{eq:right-Hom-formula}
\Hom(R,S)=\Hom(R^{\mathrm{perf}},S).\footnote{As already mentioned in the introduction, all unlabelled $\Hom$'s are meant to be over $\FF$.}
\end{equation} By definition, $R^{\mathrm{perf}}$ is a perfect $\mathbb{F}$-algebra and there is a canonical map $R\to R^{\mathrm{perf}}$ (as coming from the identity of $R^{\mathrm{perf}}$). We begin with a well-known alternative description of $R^{\mathrm{perf}}$.
\begin{Lemma}
In the category of $\mathbb{F}$-algebras, we have a canonical isomorphism
\begin{equation}\label{eq:Rperf-as-colim}
R^{\mathrm{perf}}\cong \mathrm{colim}\left(R\xrightarrow{x\mapsto x^q} R\xrightarrow{x\mapsto x^q} R \xrightarrow{x\mapsto x^q}\cdots \right).
\end{equation}
\end{Lemma}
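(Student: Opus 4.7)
Let me denote by $R_\infty$ the colimit on the right of \eqref{eq:Rperf-as-colim}. The plan is to endow $R_\infty$ with a perfect $\mathbb{F}$-algebra structure and then check that it satisfies the same universal property \eqref{eq:right-Hom-formula} as $R^{\mathrm{perf}}$, which forces a canonical isomorphism.

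First I would observe that the transition map $x\mapsto x^q$ is an $\mathbb{F}$-algebra endomorphism of $R$ (since $\alpha^q=\alpha$ for $\alpha\in\mathbb{F}$), so $R_\infty$ inherits a well-defined $\mathbb{F}$-algebra structure as a filtered colimit of $\mathbb{F}$-algebras. Writing $[x]_n$ for the class in $R_\infty$ of an element $x$ in the $n$-th copy of $R$, the defining relation in the colimit is $[x^q]_{n+1}=[x]_n$. I would then argue that the Frobenius $\Phi:R_\infty\to R_\infty$, $[x]_n\mapsto [x^q]_n$, is bijective: surjectivity is immediate from $\Phi([x]_{n+1})=[x^q]_{n+1}=[x]_n$, while injectivity follows because $[x^q]_n=0$ means some further iterate $x^{q^{k+1}}$ vanishes in $R$, whence $[x]_n=[x^{q^{k+1}}]_{n+k+1}=0$. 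Thus $R_\infty$ is perfect.

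Next I would verify the universal property. Given a perfect $\mathbb{F}$-algebra $S$ and a morphism $f:R\to S$, the uniqueness of $q^n$-th roots in $S$ lets me define $f_n:R\to S$ by $f_n(x):=f(x)^{q^{-n}}$, and the identity $f_{n+1}(x^q)=f(x^q)^{q^{-n-1}}=f(x)^{q^{-n}}=f_n(x)$ shows these maps form a cocone. By the universal property of the colimit, they glue to a unique $\mathbb{F}$-algebra morphism $\tilde{f}:R_\infty\to S$ restricting to $f$ on the $0$-th copy. Conversely, any such extension must satisfy $\tilde{f}([x]_n)^{q^n}=\tilde{f}([x^{q^n}]_n)=\tilde{f}([x]_0)=f(x)$, hence $\tilde{f}([x]_n)=f(x)^{q^{-n}}$, which gives uniqueness. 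Together with the canonical map $R\to R_\infty$, $x\mapsto [x]_0$, this shows $R_\infty$ corepresents the functor $S\mapsto \Hom(R,S)$ on perfect $\mathbb{F}$-algebras, exactly as $R^{\mathrm{perf}}$ does, so the two are canonically isomorphic.

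I do not anticipate a genuine obstacle here: the content is a standard unwinding of the universal property, and the only substantive point is the verification that Frobenius on the filtered colimit is invertible, which reduces to the explicit description of elements as classes $[x]_n$ together with the relation $[x^q]_{n+1}=[x]_n$.
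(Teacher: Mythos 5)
Your proof is correct and takes essentially the same route as the paper: both arguments show that the colimit is perfect and then verify the defining Hom-formula for $R^{\mathrm{perf}}$. The paper compresses the second step by invoking the adjunction $\Hom(\mathrm{colim}\,R,S)\cong\lim\Hom(R,S)$ and observing that precomposition with $x\mapsto x^q$ is bijective on $\Hom(R,S)$ when $S$ is perfect, whereas you unpack this by explicitly constructing the cocone $f_n(x)=f(x)^{q^{-n}}$ and checking gluing and uniqueness; the content is the same.
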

\begin{proof}
Let $R'$ denote the right-hand side of \eqref{eq:Rperf-as-colim}. First note that $R'$ is perfect by construction so it suffices to show that $R'$ satisfies the right Hom-formula \eqref{eq:right-Hom-formula}. We have
\[
\Hom(R',S) =\Hom(\mathrm{colim}_{x\mapsto x^q} R, S) = \mathrm{lim}_{x\mapsto x^q} \Hom(R,S)
\]
Now, if $S$ is perfect, the map $\Hom(R,S)\to \Hom(R,S)$ obtained by precomposing with the $q$th power map on $R$ is bijective and the limit above reduces to $\Hom(R,S)$.
\end{proof}

\begin{Lemma}\label{lem:reduced}
The following are equivalent:
\begin{enumerate}[label=$(\roman*)$]
\item\label{item:reduced} $R$ is reduced;
\item\label{item:injective-Frob} The $q$-th Frobenius map $\mathrm{F}:x\mapsto x^q$ on $R$ is injective;
\item\label{item:RtoRperf-injects} The map $R\to R^{\mathrm{perf}}$ is injective.
\end{enumerate}
In addition, the kernel of $R\to R^{\mathrm{perf}}$ is the nil-radical of $R$.
\end{Lemma}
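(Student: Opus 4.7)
My plan is to first pin down the additional statement about $\ker(R\to R^{\perf})$, from which the three equivalences will follow at once.

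The previous lemma identifies $R^{\perf}$ with the filtered colimit $\mathrm{colim}(R\xrightarrow{F}R\xrightarrow{F}R\to\cdots)$, where $F$ denotes the $q$-Frobenius. By the standard criterion for vanishing in a filtered colimit along a single endomorphism, an element $x\in R$ has zero image in $R^{\perf}$ if and only if $F^n(x)=x^{q^n}=0$ for some $n\geq 0$. Since $q$ is a power of $p$, this is in turn equivalent to $x$ being nilpotent. Thus $\ker(R\to R^{\perf})$ is exactly the nil-radical of $R$. This establishes the final sentence of the lemma and simultaneously proves \ref{item:reduced} $\Leftrightarrow$ \ref{item:RtoRperf-injects}, since $R$ is reduced precisely when its nil-radical is trivial.

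It remains to connect \ref{item:injective-Frob} with the other two conditions. For \ref{item:reduced} $\Rightarrow$ \ref{item:injective-Frob} one uses that $F$ is additive in characteristic $p$, so its injectivity amounts to $\{x\in R\mid x^q=0\}=\{0\}$, which clearly holds when $R$ is reduced. Conversely, if $F$ is injective then so is every iterate $F^n$, so no nonzero $x$ can satisfy $x^{q^n}=0$ for any $n$; in particular $R$ has no nonzero nilpotents, giving \ref{item:injective-Frob} $\Rightarrow$ \ref{item:reduced}.

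The argument is entirely formal and presents no real obstacle; the only point requiring care is that $q$ is a power of $p$, which is what both makes $F$ additive and makes vanishing of some $x^{q^n}$ equivalent to nilpotence of $x$.
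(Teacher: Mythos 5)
Your proof is correct, and it reorganizes the argument in a way that is somewhat slicker than the paper's. The paper first proves $(ii)\Leftrightarrow(iii)$ by appealing to the criterion that a map into a directed colimit $\mathrm{colim}_i\,U_i$ is injective iff all composites $V\to U_i\to U_j$ are, and then handles the kernel statement separately: the inclusion $R^{\mathrm{nil}}\subseteq\ker(R\to R^{\perf})$ is immediate, and for the reverse it passes to $S=R/R^{\mathrm{nil}}$ (reduced), uses $(i)\Rightarrow(iii)$ to get $S\hookrightarrow S^{\perf}$, and transports the conclusion back to $R$. You instead go straight for the kernel characterization via the elementwise vanishing criterion in a filtered colimit along a single endomorphism ($x\mapsto 0$ in the colimit iff $F^n(x)=x^{q^n}=0$ for some $n$), which is then equivalent to nilpotence, and you get both the final sentence and $(i)\Leftrightarrow(iii)$ in one stroke before tacking on the easy direct equivalence with $(ii)$. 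This buys a more unified argument at the cost of invoking a slightly sharper colimit fact (vanishing of elements rather than injectivity of the structure map); both facts are standard, so this is a stylistic rather than substantive difference. One small inaccuracy in your closing remark: the equivalence ``$x^{q^n}=0$ for some $n$ $\Leftrightarrow$ $x$ nilpotent'' holds for any integer $q\geq 2$ and does not actually use that $q$ is a power of $p$; only the additivity of $F$ does.
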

\begin{proof}
That \ref{item:reduced} implies \ref{item:injective-Frob} is clear. For \ref{item:injective-Frob} implies \ref{item:reduced}, let $n$ be an arbitrary positive integer and let $h$ be such that $q^h \geq n$. Since the composition of injective maps is again injective, $x\mapsto x^{q^h}$ is injective, hence $x^n=0$ implies $x^{q^h}=0$ implies $x=0$.\\
For point \ref{item:RtoRperf-injects}, we recall that a map from $V$ to a direct colimit $U:=\mathrm{colim}_{i\geq 0} U_i$ corresponds to a map $V\to U_i$ for some $i\geq 0$. In addition, the map $V\to U$ is injective if, and only if, all compositions $V\to U_i\to U_j$ for $j\geq i$ are injective as well. In particular, the equivalence between \ref{item:injective-Frob} and \ref{item:RtoRperf-injects} is clear.\\
We prove the last statement. If $x\in R^{\mathrm{nil}}$ then $x$ becomes zero in $R^{\mathrm{perf}}$ as $x^{q^h}=0$ for some $h\geq 0$, and by definition of the colimit. Conversely, let $x\in R$ which becomes zero in $R^{\mathrm{perf}}$. In particular $x$ is zero in $S^{\mathrm{perf}}$ with $S:=R/R^{\mathrm{nil}}$. Since $S$ is reduced, $S\to S^{\mathrm{perf}}$ is injective and $x$ is zero in $S$. Hence $x\in R^{\mathrm{nil}}$.
\end{proof}

The following is proven in \cite[Lemma 3.4$(xii)$]{bhatt-scholze}.
\begin{Lemma}\label{lem:perf-respects-faithfulllyflat}
Let $R\to S$ be a faithfully flat ring homomorphism. Then $R^{\mathrm{perf}}\to S^{\mathrm{perf}}$ is also faithfully flat.
\end{Lemma}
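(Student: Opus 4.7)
The plan is to reduce to the case where $R$ is perfect, then apply the colimit description \eqref{eq:Rperf-as-colim}. Base change along the faithfully flat map $R \to S$ yields a faithfully flat map $R^{\mathrm{perf}} \to R^{\mathrm{perf}} \otimes_R S$. The left-adjoint property of perfection \eqref{eq:right-Hom-formula} further yields $(R^{\mathrm{perf}} \otimes_R S)^{\mathrm{perf}} = S^{\mathrm{perf}}$: for any perfect $\mathbb{F}$-algebra $T$, a ring map $R^{\mathrm{perf}} \otimes_R S \to T$ amounts to a pair of compatible maps from $R^{\mathrm{perf}}$ and $S$ to $T$, but the first is determined by its restriction to $R$ (since $T$ is perfect), so such data is equivalent to a ring map $S \to T$. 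It therefore suffices to prove the statement when $R$ is perfect; rename $A:=R$ and $B:=S$ for clarity.

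For the perfect-base case, I would write $B^{\mathrm{perf}} = \mathrm{colim}_n B_n$ with $B_n = B$ and transitions $B_n \to B_{n+1}$ given by the $q$-th power map. View each $B_n$ as an $A$-algebra via the composite $A \xrightarrow{x \mapsto x^{q^n}} A \to B$; since $A$ is perfect, the first arrow is an automorphism, so the composite $A \to B_n$ is itself faithfully flat, and the transitions $B_n \to B_{n+1}$ are $A$-linear by a direct check. Flatness of $B^{\mathrm{perf}}$ over $A$ then follows since filtered colimits of flat modules are flat.

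For faithful flatness, I would take a nonzero $A$-module $M$ with a chosen $m \in M \setminus \{0\}$ and track $m \otimes 1_B \in M \otimes_A B_0$ through the colimit. Since $A \to B_0$ is faithfully flat, the canonical map $M \hookrightarrow M \otimes_A B_0$ is injective, so $m \otimes 1_B \neq 0$; the transitions send it to $m \otimes 1_B^{q^n} = m \otimes 1_B \in M \otimes_A B_n$, which remains nonzero for all $n$ by the same argument applied to $A \to B_n$. Hence the class $[m \otimes 1_B]$ is nonzero in $M \otimes_A B^{\mathrm{perf}} = \mathrm{colim}_n (M \otimes_A B_n)$, establishing faithfulness. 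The only subtle point I anticipate is the identification $(R^{\mathrm{perf}} \otimes_R S)^{\mathrm{perf}} = S^{\mathrm{perf}}$ used in the reduction; the rest is a clean colimit computation, crucially relying on the fact that Frobenius is an automorphism at the perfect level so that the Frobenius-twisted structure maps preserve faithful flatness.
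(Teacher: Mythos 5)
Your proof is correct. The paper does not give its own argument for this lemma but simply cites Bhatt--Scholze, so there is no in-text proof to compare against; your two-step argument is a valid self-contained account. The reduction to a perfect base $A := R^{\mathrm{perf}}$ via the adjunction identity $(R^{\mathrm{perf}} \otimes_R S)^{\mathrm{perf}} \cong S^{\mathrm{perf}}$ is sound, and the colimit computation is handled correctly: the Frobenius-twisted $A$-algebra structure $A \xrightarrow{x\mapsto x^{q^n}} A \to B$ on each copy $B_n=B$ is exactly what makes the transition maps $A$-linear, each $A\to B_n$ remains faithfully flat because Frobenius on $A$ is an automorphism, flatness passes to the filtered colimit, and faithfulness follows by tracking $m\otimes 1$ whose image $m\otimes 1\in M\otimes_A B_n$ stays nonzero at every finite stage. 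The one point you leave slightly implicit in the reduction is that the composite $R^{\mathrm{perf}} \to R^{\mathrm{perf}}\otimes_R S \to (R^{\mathrm{perf}}\otimes_R S)^{\mathrm{perf}} \cong S^{\mathrm{perf}}$ agrees with the canonical map $R^{\mathrm{perf}}\to S^{\mathrm{perf}}$; this is what the lemma is actually about, and while it does follow from naturality of the unit of the perfection adjunction applied to the square formed by $R\to S$ and the two maps into $R^{\mathrm{perf}}\otimes_R S$, it deserves a sentence.
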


\begin{Definition}
We let $R[\tau,\tau^{-1}]$ be the quotient of the free algebra over $R$ generated by two formal variables $\tau$, $\sigma$, subject to the relations $\tau a=a^q\tau$, $\sigma a^q=a\sigma$ and $\tau \sigma=\sigma \tau=1$, for all $a\in R$.\footnote{The experienced reader will notice that this ring is isomorphic to the left Ore localization of $R[\tau]$ at the left Ore set $S=\{1,\tau,\tau^2,\ldots \}$. Hence, our notation using $\tau^{-1}$.}
\end{Definition}

We record some immediate consequences of these relations.
\begin{Lemma}\label{lem:rewriting}
The following holds in $R[\tau,\tau^{-1}]$.
\begin{enumerate}
\item\label{item:rewriting} For all $a\in R$, $i,j\geq 0$, we have $\sigma^j a\tau^i =\sigma^{j+1}a^q \tau^{i+1}$.
\item Any element can be written in the form $\sigma^n\cdot (a_0+a_1\tau+\ldots+a_m\tau^m)$ for suitable positive integers $n,m\in \bN$ and coefficients $a_i\in R$.
\item \label{item:nilpotent-vanishes} A nilpotent element in $R$ becomes zero in $R[\tau,\tau^{-1}]$.
\end{enumerate}
\end{Lemma}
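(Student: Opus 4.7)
The three items fit together: part (1) is the core computation, part (2) is a normal-form statement that will hinge on (1) for its final step, and part (3) will come out as a one-line consequence of (1).

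For part (1), my plan is to deduce the stated identity directly from the defining relations by first verifying the elementary equality $\sigma a^q \tau = a$, which follows at once by combining $\sigma a^q = a\sigma$ with $\sigma\tau = 1$. Multiplying on the left by $\sigma^j$ and on the right by $\tau^i$ then gives
\[
\sigma^{j+1} a^q \tau^{i+1} \,=\, \sigma^j (\sigma a^q \tau)\tau^i \,=\, \sigma^j a \tau^i,
\]
which is exactly the asserted formula.

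For part (2), I will first show by induction on word length that every word in the generators $R \cup \{\sigma,\tau\}$ equals a single monomial of the form $\sigma^j a \tau^i$ with $j,i \geq 0$ and $a \in R$. The inductive step is a small case analysis: one multiplies such a monomial on either side by a scalar, by $\sigma$, or by $\tau$, and each of the resulting six cases is resolved by one of the relations $a\sigma = \sigma a^q$, $\tau a = a^q\tau$, $\sigma\tau = \tau\sigma = 1$, iterated as needed to yield $a\sigma^j = \sigma^j a^{q^j}$ and $\tau^i a = a^{q^i}\tau^i$. By bilinearity, every element of $R[\tau,\tau^{-1}]$ then takes the form $\sum_k \sigma^{j_k} a_k \tau^{i_k}$. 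To produce the claimed common $\sigma^n$ on the left, I will apply part (1) iteratively to raise each $j_k$ to the common value $n := \max_k j_k$ (at the cost of raising scalars to higher $q$-th powers and shifting $\tau$-exponents), and then factor the prefix $\sigma^n$ out of the sum.

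For part (3), I will iterate part (1): a quick induction yields $a = \sigma^k a^{q^k} \tau^k$ for every $k \geq 0$. If $a \in R$ is nilpotent, then $a^{q^k} = 0$ for $k$ sufficiently large, and this identity forces $a = 0$ in $R[\tau,\tau^{-1}]$. The main obstacle will be the inductive bookkeeping in (2): one must carefully verify that each of the six elementary operations — multiplying $\sigma^j a \tau^i$ on the left or the right by a scalar, by $\sigma$, or by $\tau$ — returns a monomial of the same shape, with special attention to the cases in which $\sigma\tau = 1$ collapses the skeleton. This step is routine but unavoidable; once it and (1) are in hand, the remaining manipulations are immediate.
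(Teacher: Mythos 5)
Your proposal is correct and matches the paper's proof in all three parts: the identity in (1) via the defining relations (the paper inserts $\sigma\tau=1$ where you verify $\sigma a^q\tau=a$, an equivalent one-step computation), the reduction of monomials to the form $\sigma^j a\tau^i$ followed by raising to a common $\sigma$-power via (1) for part (2), and the iteration of (1) to kill a nilpotent in part (3). The only difference is that you spell out the word-length induction for (2) more explicitly than the paper, which simply observes the monomial normal form.
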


\begin{proof}
By the given relations, for all $a\in R$, $i,j\geq 0$, we have
\begin{equation}
\sigma^j a\tau^i = \sigma^j (\sigma\tau)a\tau^i = \sigma^{j+1}(\tau a)\tau^i=\sigma^{j+1}a^q \tau^{i+1}.
\end{equation}
This shows the first part.\\
Further, we observe that any monomial can be rewritten as $\sigma^j a\tau^i$ for some $i,j\in \bN$, $a\in R$. Rewriting any element $f\in R[\tau,\tau^{-1}]$ as a sum of terms $\sigma^j a\tau^i$, taking $n$ to be the maximum of all $j$ that occur in the sum, and further using point \ref{item:rewriting} to increase the $\sigma$-powers up to $n$, we obtain the desired representation.\\
Finally, using \ref{item:rewriting} again, we see that any nilpotent element $a\in R$ becomes $0$ in $R[\tau,\tau^{-1}]$.
\end{proof}

As a consequence, we obtain the following surprising proposition.
\begin{Proposition}\label{prop:inverse-tau-perfectize}
The canonical map $R[\tau,\tau^{-1}]\to R^\perf[\tau,\tau^{-1}]$ is an isomorphism.
\end{Proposition}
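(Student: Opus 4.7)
The plan is to construct an explicit inverse $\psi \colon R^\perf[\tau,\tau^{-1}] \to R[\tau,\tau^{-1}]$ to the canonical map. The guiding insight is that inside $R[\tau,\tau^{-1}]$ the conjugate $\sigma^n a \tau^n$ already behaves as if it were a $q^n$-th root of $a$: indeed, Lemma~\ref{lem:rewriting}\ref{item:rewriting} gives $\sigma^n a \tau^n = \sigma^{n+1} a^q \tau^{n+1}$, which mirrors exactly the identification $a^{1/q^n} = (a^q)^{1/q^{n+1}}$ defining $R^\perf$ as the colimit \eqref{eq:Rperf-as-colim}. Thus, once $\tau$ has been inverted, perfectizing the coefficient ring should cost nothing.

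To make this precise, I first define, for every $n \geq 0$, a map $\phi_n \colon R \to R[\tau,\tau^{-1}]$ by $a \mapsto \sigma^n a \tau^n$. Each $\phi_n$ is a ring homomorphism: additivity is obvious, unitality follows from $\sigma^n \tau^n = 1$ (induction from $\sigma\tau = 1$), and multiplicativity from $\tau^n \sigma^n = 1$ (induction from $\tau\sigma = 1$) which collapses $\sigma^n a \tau^n \cdot \sigma^n b \tau^n$ to $\sigma^n ab \tau^n$. The compatibility $\phi_n = \phi_{n+1} \circ F$ with the Frobenius $F$ is again Lemma~\ref{lem:rewriting}\ref{item:rewriting}. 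By the colimit description \eqref{eq:Rperf-as-colim} of $R^\perf$, the family $(\phi_n)_{n \geq 0}$ assembles into a unique ring homomorphism $\psi_0 \colon R^\perf \to R[\tau,\tau^{-1}]$ sending $a^{1/q^n}$ to $\sigma^n a \tau^n$.

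Next I extend $\psi_0$ to $\psi \colon R^\perf[\tau,\tau^{-1}] \to R[\tau,\tau^{-1}]$ by sending the formal variables $\tau,\sigma$ of the source to the elements of the same name. By the presentation by generators and relations, this extension exists provided that $\tau \psi_0(\alpha) = \psi_0(\alpha^q) \tau$ and $\sigma \psi_0(\alpha^q) = \psi_0(\alpha)\sigma$ hold for every $\alpha \in R^\perf$. Writing $\alpha = a^{1/q^n}$, both reduce after brief manipulation using $\tau\sigma = 1$ to a single application of Lemma~\ref{lem:rewriting}\ref{item:rewriting}. Finally, that $\psi \circ \phi$ and $\phi \circ \psi$ are identities is checked on generators: for $\psi \phi$ this is tautological, whereas for $\phi \psi$ one must verify $\phi(\sigma^n a \tau^n) = a^{1/q^n}$ inside $R^\perf[\tau,\tau^{-1}]$, which follows from the identity $\sigma^n a = a^{1/q^n} \sigma^n$ (itself the $n$-fold iteration of $\sigma a^q = a\sigma$ applied to $a^{1/q^n} \in R^\perf$) together with $\sigma^n \tau^n = 1$.

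The construction involves no serious obstacle; the only mild subtlety is the \emph{a priori} dependence of $\psi_0$ on the choice of representative $a$ of $a^{1/q^n}$, which is ruled out precisely by the compatibility encoded in Lemma~\ref{lem:rewriting}\ref{item:rewriting}.
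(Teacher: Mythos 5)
Your proposal is correct, and the central construction coincides with the paper's: the family of ring maps $a \mapsto \sigma^n a \tau^n$ assembles, via the colimit description \eqref{eq:Rperf-as-colim}, into $\psi_0\colon R^{\mathrm{perf}} \to R[\tau,\tau^{-1}]$, which is then extended to $R^{\mathrm{perf}}[\tau,\tau^{-1}]$ after checking compatibility with the defining relations. The one structural difference is how the argument is closed. The paper first proves injectivity of the canonical map as a separate step, using Lemma~\ref{lem:rewriting}\eqref{item:nilpotent-vanishes} (nilpotents of $R$ die in $R[\tau,\tau^{-1}]$) together with Lemma~\ref{lem:reduced} to see that the canonical map factors through $(R/R^{\mathrm{nil}})[\tau,\tau^{-1}] \hookrightarrow R^{\mathrm{perf}}[\tau,\tau^{-1}]$; only then is the one-sided inverse constructed. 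You instead verify both compositions on generators, which makes the nilradical detour logically unnecessary and yields a slightly more self-contained proof. What you lose is the standalone observation, worth having on its own, that $R[\tau,\tau^{-1}]$ only depends on $R/R^{\mathrm{nil}}$. Both are valid and both verifications you indicate go through exactly as you sketch.
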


\begin{proof}
By Lemma \ref{lem:rewriting}\eqref{item:nilpotent-vanishes}, the nil-radical $R^{\mathrm{nil}}$ of $R$ is contained in the kernel of $R\to R[\tau,\tau^{-1}]$. In particular $R[\tau,\tau^{-1}]\cong (R/R^{\mathrm{nil}})[\tau,\tau^{-1}]$ and, since $R/R^{\mathrm{nil}} \to R^\perf$ is injective, the induced map $R[\tau,\tau^{-1}]\to R^\perf[\tau,\tau^{-1}]$ is injective as well.\\
The family of homomorphisms $g_i:R\to R[\tau,\tau^{-1}], a\mapsto \sigma^i a\tau^i$ satisfies the condition
\[ g_{i+1}(a^q)=\sigma^{i+1} a^q\tau^{i+1}\stackrel{\eqref{item:rewriting}}{=} \sigma^i a\tau^i = g_i(a) \]
for all $a\in R$, $i\geq 0$.
Hence, this family represents an element of
\[
\lim_{x\mapsto x^q} \Hom(R,R[\tau,\tau^{-1}]) \cong \Hom(R^{\perf},R[\tau,\tau^{-1}]),
\]
\emph{i.e.}~a homomorphism $g:R^\perf\to R[\tau,\tau^{-1}]$. It is easy to check that the composition $R^\perf\to R[\tau,\tau^{-1}]\to R^\perf[\tau,\tau^{-1}]$ is the canonical embedding of $R^\perf$. Therefore, the extension of $g$ to a homomorphism $R^\perf[\tau,\tau^{-1}]\to R[\tau,\tau^{-1}]$ is a left inverse to the map $R[\tau,\tau^{-1}]\to R^\perf[\tau,\tau^{-1}]$.
\end{proof}

\subsection{Formal power series in $\sigma$}

In virtue of Proposition \ref{prop:inverse-tau-perfectize}, we are free to assume that $R$ is perfect for what follows. As the $q$-power map on $R$ is bijective in that case, we use the convention $\tau^{-i}=\sigma^i$ for all $i\geq 0$, so that the relation
$\tau^i a= a^{q^i}\tau^i$ holds for all $i\in \bZ$.

We begin with the following classical lemma.
\begin{Lemma}\label{lem:coefficient}
Any element of $R[\tau,\tau^{-1}]$ can be uniquely written in the form $\sigma^s a_{-s}+\ldots+\sigma a_{-1}+a_0+\tau a_1+\ldots+\tau^r a_r$ for some $r,s\geq 0$ and coefficients $a_i\in R$.
\end{Lemma}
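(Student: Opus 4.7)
The plan has two parts: existence of the normal form via the second part of Lemma \ref{lem:rewriting}, and uniqueness via a faithful semilinear representation of $R[\tau,\tau^{-1}]$ on a $\mathbb{Z}$-graded free $R$-module.

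For existence, I would start from Lemma \ref{lem:rewriting}, which writes any element as $\sigma^n(a_0+a_1\tau+\ldots+a_m\tau^m)$. Distributing yields a sum of monomials $\sigma^n a_i\tau^i$. Perfection of $R$ upgrades the relations $\tau a=a^q\tau$ and $\sigma a^q=a\sigma$ to the uniform commutation rule $b\tau^i=\tau^i b^{q^{-i}}$ for every $i\in\mathbb{Z}$. Combined with $\sigma^n\tau^i=\sigma^{n-i}$ (when $i\leq n$) or $\tau^{i-n}$ (otherwise), each monomial reduces to $\sigma^{n-i}a_i^{q^{-i}}$ or $\tau^{i-n}a_i^{q^{-i}}$. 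Distinct indices $i$ contribute distinct powers of $\sigma$ or $\tau$, so no subsequent collection is necessary.

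For uniqueness, I would construct a faithful representation. Let $M:=\bigoplus_{i\in\mathbb{Z}}R\cdot e_i$ be the free $R$-module on $\{e_i\}_{i\in\mathbb{Z}}$, and define $\mathbb{F}$-linear operators
\[\tau\cdot(re_i):=r^qe_{i+1},\qquad \sigma\cdot(re_i):=r^{q^{-1}}e_{i-1},\]
well-defined by perfection. A direct check verifies all defining relations of $R[\tau,\tau^{-1}]$, so the construction extends to a ring homomorphism $R[\tau,\tau^{-1}]\to\End_{\mathbb{F}}(M)$. A normal-form element $\sum_{i=-s}^{r}\tau^i a_i$ (setting $\tau^{-j}:=\sigma^j$) acts on $e_0$ to produce $\sum_i a_i^{q^i}e_i\in M$; if the element is zero in $R[\tau,\tau^{-1}]$, each $a_i^{q^i}=0$, and since a perfect ring is reduced (Lemma \ref{lem:reduced}, Frobenius being \emph{a fortiori} injective), every $a_i$ vanishes.

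The sole delicate step is calibrating the semi-linear action so that every defining relation of $R[\tau,\tau^{-1}]$ is respected simultaneously; beyond this verification, both halves of the argument are formal.
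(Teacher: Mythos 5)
Your proof is correct, and it gets to the same destination as the paper by a closely related but not identical route. The paper equips $\mathcal{R}:=\bigoplus_{i\in\mathbb{Z}}\rho^iR$ with the twisted multiplication \eqref{eq:multiplication-rule} and exhibits mutually inverse ring maps $R[\tau,\tau^{-1}]\leftrightarrow\mathcal{R}$, so that existence and uniqueness of the normal form drop out together from the isomorphism. You instead split the two: existence is lifted from Lemma \ref{lem:rewriting}\eqref{item:rewriting} by commuting coefficients past $\tau^i$ (legitimate, since perfection gives the uniform rule $a\tau^i=\tau^ia^{q^{-i}}$ for all $i\in\mathbb{Z}$), and uniqueness is obtained by constructing a faithful $\mathbb{F}$-semilinear representation on $M=\bigoplus_{i\in\mathbb{Z}}Re_i$ and reading off coefficients from the image of $e_0$. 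Your $M$ is essentially the left regular module of the paper's $\mathcal{R}$, so conceptually the two arguments are cousins; the practical difference is that verifying the four defining relations on $\End_{\mathbb{F}}(M)$ is a lighter check than verifying associativity and bilinearity of the multiplication rule on $\mathcal{R}$, at the small cost of needing to observe at the end that the vanishing of $a_i^{q^i}$ forces $a_i=0$ (which is exactly where perfectness, via Lemma \ref{lem:reduced}, is used; note that for negative $i$ one only needs bijectivity of Frobenius, not reducedness). Both uses of perfectness are already in scope, so the argument closes cleanly.
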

\begin{proof}
Consider the group $\mathcal{R}:=\bigoplus_{i\in \mathbb{Z}} \rho^i R$ where $(\rho^i)_{i\in \mathbb{Z}}$ are formal coordinates; we give a (associative, non-commutative) ring structure to $\mathcal{R}$ by setting:
\begin{equation}\label{eq:multiplication-rule}
\left(\psum_{i\in \mathbb{Z}} \rho^i a_i\right)\cdot \left(\psum_{j\in \mathbb{Z}}{\rho^j b_j}\right):=\psum_{k\in \mathbb{Z}}{\rho^k\left(\psum_{i+j=k}{a_i^{q^{-j}}b_j}\right)}
\end{equation}
where the $'$ indicates that the sum is finite.
Note that this is well-defined as $R$ is perfect. There is a map from the group freely generated by finite products of two formal variables $\tau$, $\sigma$ and elements of $R$ to $\mathcal{R}$ by mapping $\tau$ to $\rho^{1}$ and $\sigma$ to $\rho^{-1}$, elements of $R$ to $\rho^0R$ and formal products of those to the product of their image in $\mathcal{R}$. It factors through $R[\tau,\tau^{-1}]\to \mathcal{R}$ as the relations $\tau a=a^q\tau$, $\sigma a^q=a\sigma$ and $\tau \sigma=\sigma \tau=1$, for all $a\in R$, hold in $\mathcal{R}$.
To prove the lemma it suffices to show that this map is injective; we in fact show that this is an isomorphism. Indeed, there is also a map $\mathcal{R}\to R[\tau,\tau^{-1}]$ obtained by sending $\rho^i$ to $\tau^{i}$ if $i\geq 0$ and to $\sigma^{-i}$ if $i< 0$ and it is easily shown that those maps are mutual inverse to each other.
\end{proof}

\begin{Definition}\label{def:coeffn}
Let $i\in \mathbb{Z}$. We denote by $\coeff_i:R[\tau,\tau^{-1}]\to R$ the right-$R$-linear map that extracts the coefficient of $\tau^i$ with respect to the presentation given in Lemma \ref{lem:coefficient} (with the convention that $\tau^{i}=\sigma^{-i}$ for $i<0$).\\
For $p\in R[\tau,\tau^{-1}]$ non zero, we denote by $\deg_\tau(p)$ the maximal integer $i$ for which $\coeff_i(p)\neq 0$. We convient that $\deg_\tau(0)=-\infty$.
\end{Definition}

\begin{Remark}\label{rem:convention-coeff}
We warn the reader that for $i\ne 0$, the map $\coeff_i$ really uses that we write the coefficients on the right, and not on the left. 
The map $\coeff_0$, however, does not depend on that convention, and it is even bi-$R$-linear.

With our convention, the maps $\coeff_i$ can also be expressed via the formula
\[
\coeff_i(p)=\coeff_0(\tau^{-i} p)
\]
for all $p\in R[\tau,\tau^{-1}]$, and any $i\in \bZ$.
\end{Remark}

The following properties of the degree in $\tau$ are easily shown.
\begin{Proposition}
Let $f,g\in R[\tau,\tau^{-1}]$.
\begin{enumerate}
\item $\deg_\tau(\tau)=1$ and $\deg_{\tau}(\sigma)=-1$;
\item We have $\deg_\tau(fg)\leq \deg_{\tau}(f)+\deg_\tau(g)$ with equality if $R$ is a domain;
\item We have $\deg_{\tau}(f+g)\leq \max\{\deg_{\tau}(f),\deg_{\tau}(g)\}$.
\end{enumerate}
\end{Proposition}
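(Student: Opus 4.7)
The plan is to dispatch each item by direct calculation using the normal form of Lemma~\ref{lem:coefficient} together with the multiplication formula~\eqref{eq:multiplication-rule} derived in its proof. Recall that $R$ is assumed perfect throughout this subsection, so $x \mapsto x^{q^{-n}}$ is a well-defined ring automorphism of $R$ for every $n \in \bZ$.

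Items (1) and (3) amount to unwinding definitions. For (1), I would observe that $\tau = \tau^{1}\cdot 1$ and $\sigma = \tau^{-1}\cdot 1$ are already in the normal form of Lemma~\ref{lem:coefficient}, so the highest index with nonzero coefficient is $1$ and $-1$ respectively, giving $\deg_\tau(\tau)=1$ and $\deg_\tau(\sigma)=-1$. For (3), I would put $f = \sum_{i} \tau^i a_i$ and $g = \sum_{i}\tau^i b_i$ in their common normal form; then $f + g = \sum_i \tau^i (a_i + b_i)$, and whenever $i > \max\{\deg_\tau f, \deg_\tau g\}$ both $a_i$ and $b_i$ vanish, hence so does their sum.

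The substantive case is (2). By formula~\eqref{eq:multiplication-rule}, the coefficient of $\tau^k$ in $fg$ equals $\sum_{i+j=k} a_i^{q^{-j}} b_j$. Setting $m := \deg_\tau f$ and $n := \deg_\tau g$, for $k > m + n$ every pair $(i,j)$ with $i + j = k$ must satisfy $i > m$ or $j > n$, in which case $a_i = 0$ or $b_j = 0$ and the summand vanishes; this yields $\deg_\tau(fg) \leq m + n$. For $k = m + n$, every such pair $(i,j) \neq (m,n)$ still has $i > m$ or $j > n$, so the sum reduces to the single term $a_m^{q^{-n}} b_n$. When $R$ is a domain, $a_m^{q^{-n}} \neq 0$ because the $q^{-n}$-th power map is a ring automorphism of the perfect ring $R$, and combined with $b_n \neq 0$ this forces $a_m^{q^{-n}} b_n \neq 0$, establishing equality. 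There is no real obstacle in this proof beyond carefully tracking the $q$-power twist of the right-coefficients when reading off the leading term.
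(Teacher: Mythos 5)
The paper omits a proof of this proposition, dismissing it as ``easily shown,'' so there is no argument to compare yours against; but your proof is correct and is exactly the verification the authors had in mind. Reading coefficients off via the normal form of Lemma~\ref{lem:coefficient}, and for item (2) using the multiplication rule \eqref{eq:multiplication-rule} with the $q^{-j}$-twist, is the natural route. The only thing you glide past is the degenerate case $f=0$ or $g=0$, where $\deg_\tau$ equals $-\infty$ by convention; with the usual arithmetic on $-\infty$ both items (2) and (3) hold trivially there, so this is cosmetic rather than a gap. In item (2) your reduction to the single surviving term $a_m^{q^{-n}}b_n$ at $k=m+n$ and the appeal to the bijectivity of $x\mapsto x^{q^{-n}}$ on the perfect ring $R$ (so that $a_m^{q^{-n}}\neq 0$) is precisely the point where perfectness enters; it would also suffice to invoke mere injectivity of Frobenius, which holds since a perfect ring is reduced.
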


From this proposition, we see that the application $|\cdot|_{\tau}:R[\tau,\tau^{-1}]\to \mathbb{Q}_+$, $f\mapsto q^{\deg_{\tau}(f)}$ defines an ultrametric (submultiplicative) norm on $R[\tau,\tau^{-1}]$, hence a topology on $R[\tau,\tau^{-1}]$ for which the addition and multiplication are continuous operations. We also notice that the maps $\coeff_i:R[\tau,\tau^{-1}]\to R$ are continuous with respect to this topology on the source and the discrete topology on $R$.

\begin{Definition}
We define $R\ls{\sigma}$ as the noncommutative ring corresponding to the completion of $R[\tau,\tau^{-1}]$ with respect to the topology induced by $|\cdot|_{\tau}$. We still denote by $\deg_\tau$ and $\coeff_i:R\ls{\sigma}\to R$ their continuous extension.\\
We denote by $R\ps{\sigma}$ the subring of $R\ls{\sigma}$ consisting of elements of norm $\leq 1$.
\end{Definition}

\begin{Remark}\label{rem:sigma-expansion}
Following the same argument as the one given in the proof of Lemma \ref{lem:coefficient}, one shows that any element $f$ of $R\ls{\sigma}$ admit a unique expansion as
\[
f=\sum_{i\geq r} \sigma^i a_{-i}
\]
for some $r\in \bZ$ and coefficients $a_{-i}\in R$, $a_{-r}\ne 0$. Then, in fact $a_{-i}=\coeff_{-i}(f)$ for all $i$, and $-r=\deg_\tau(f)$.

Further, $f\in R\ps{\sigma}$, if and only if $r\geq 0$. 
\end{Remark}

\section{Anderson modules, motives and comotives}\label{sec:objects-from-FF-arithmetic}
This is the usual mandatory section where we recall notations and definitions of Anderson's \emph{$A$-modules}, \emph{$A$-motives} and \emph{dual $A$-motives}. We allow ourselves to generalize slightly the usual setting to permit general $A$-algebras--instead of perfect $A$-fields solely--in the definition of dual $A$-motives; concurrently we will attempt to popularize the naming \emph{$A$-comotives} instead of the usual terminology \emph{dual $A$-motives} which causes confusion with \emph{duals of $A$-motives}; the prefix ``co'' is here to reminisce about ``cocharacters''.

\subsection{Preliminaries on forms of $\mathbb{G}_a^d$}\label{subsec:forms-of-Gad}
In this minimal subsection, we recall some known facts and warnings on forms of $\mathbb{G}_a^d$. To start with, let $R$ be a ring of characteristic $p>0$ and let $G$ be a smooth affine $\mathbb{F}$-vector space scheme over $R$. We consider the following two sets of $\mathbb{F}$-vector space schemes homomorphisms over $R$:
\[
M(G):=\Hom_{\mathbb{F}/R}(G,\mathbb{G}_a), \quad \text{and}\quad N(G):=\Hom_{\mathbb{F}/R}(\mathbb{G}_a,G).
\]
The former is a left-$R[\tau]$-module and the latter is a right-$R[\tau]$-module through the identification of $R[\tau]$ with $\End_{\mathbb{F}}(\mathbb{G}_a)$. Because left-$R[\tau]$-modules are better behaved than right ones (\emph{e.g.}~footnote~\ref{foot:not-noetherian}), $M$ is generally a better invariant than $N$. 
\begin{Remark}
To show how $N$ could be ill-behaved, we recall Rosenlicht's example. Let $R=k$ be an imperfect field, choose $a\in k\setminus k^q$, and set $k':=k(a^{1/q})$. Let $G$ be the $k$-subgroup of $\mathbb{G}_a\times \mathbb{G}_a$ consisting of couples $(x,y)$ subject to the relation $x^q=y+ay^q$. Then $G$ is not isomorphic to $\mathbb{G}_{a}$ but its base change along the inclusion $k\to k'$ is, via the map
\[
\mathbb{G}_{a,k'}\longrightarrow G_{k'}, \quad t\longmapsto (t+a^{1/q} t^q,t^{q}).
\]
As a consequence $N(G)=(0)$--as any non zero quotient of $\mathbb{G}_{a,k}$ is isomorphic to $\mathbb{G}_a$--but $N(G_{k'})\cong k'[\tau]$. In particular $N$ does not commute with flat base change in general, although $M$ does (see Remark~3.3 in \cite{hartl}).
\end{Remark}

We say that $G$ is \emph{a form of $\mathbb{G}_a^d$} is there exists a faithfully flat ring homomorphism $f:R\to S$ such that the base change $G_S$ of $G$ along $f$ is isomorphic to $\mathbb{G}_{a,S}^d$. In the previous remark, Rosentlich's example is a form of $\mathbb{G}_a$. From now on, we assume that $G$ is a form of $\mathbb{G}_a^d$.\\

In this situation, there is an obvious relation between $M$ and $N$.
\begin{Proposition}
The map $N(G)\to \Hom_{\mathrm{left}-R[\tau]}(M(G),R[\tau])$, $n\mapsto (m\mapsto m\circ n)$ is an isomorphism of right-$R[\tau]$-modules.
\end{Proposition}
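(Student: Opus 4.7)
Before the author's proof, here is my own plan.

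The map $\Phi: n\mapsto (m\mapsto m\circ n)$ is well-defined and bilinear in the correct senses: for any $m\in M(G)$ and $n\in N(G)$, the composite $m\circ n$ lies in $\End_{\mathbb{F}/R}(\mathbb{G}_{a,R})=R[\tau]$. Associativity of composition (together with the identification $R[\tau]=\End_{\mathbb{F}/R}(\mathbb{G}_{a,R})$) immediately yields that $\Phi(n)$ is left-$R[\tau]$-linear in $m$ and that $\Phi$ is right-$R[\tau]$-linear in $n$. So I would dispatch well-definedness and linearity in one line.

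I would then settle the split case $G=\mathbb{G}_a^d$ by direct computation. Let $\iota_1,\ldots,\iota_d\colon \mathbb{G}_a\hookrightarrow \mathbb{G}_a^d$ and $\pi_1,\ldots,\pi_d\colon \mathbb{G}_a^d\twoheadrightarrow \mathbb{G}_a$ denote the canonical inclusions and projections, satisfying $\pi_j\circ\iota_i=\delta_{ij}$ in $R[\tau]$. Then $N(\mathbb{G}_a^d)=\bigoplus_i \iota_i\cdot R[\tau]$ is free of rank $d$ as a right $R[\tau]$-module, while $M(\mathbb{G}_a^d)=\bigoplus_j R[\tau]\cdot \pi_j$ is free of rank $d$ as a left $R[\tau]$-module. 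Under these presentations, $\Phi$ becomes the canonical duality isomorphism $R[\tau]^d\cong \Hom_{\mathrm{left}\text{-}R[\tau]}(R[\tau]^d, R[\tau])$, sending $(n_1,\ldots,n_d)$ to the left-linear form $\sum_j p_j \pi_j \mapsto \sum_j p_j n_j$.

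The general case I would establish by faithfully flat descent. Pick $R\to S$ faithfully flat with $G_S\cong \mathbb{G}_{a,S}^d$, and consider the two functors on $R$-algebras $\mathcal{N}\colon T\mapsto N(G_T)$ and $\mathcal{H}\colon T\mapsto \Hom_{\mathrm{left}\text{-}T[\tau]}(M(G_T), T[\tau])$. I would argue that both are fpqc sheaves on $\Spec R$: for $\mathcal{N}$ this is classical descent for morphisms of affine schemes; for $\mathcal{H}$ one combines the compatibility of $M$ with flat base change (\emph{cf.} \cite[Remark 3.3]{hartl}), the tensor-Hom adjunction $\Hom_{T[\tau]}(M(G_S)\otimes_{S[\tau]}T[\tau], T[\tau]) = \Hom_{S[\tau]}(M(G_S), T[\tau])$, and the sheaf property of $T\mapsto T[\tau]$. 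The map $\Phi$ is natural in $T$, hence defines a morphism $\mathcal{N}\to\mathcal{H}$ of sheaves which is an isomorphism on the trivializing cover $\Spec S$ (and on $\Spec (S\otimes_R S)$, where $G$ still splits) by the previous paragraph. Therefore $\Phi$ is an isomorphism of sheaves, and in particular an isomorphism on global sections over $R$.

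The main subtle point is establishing the sheaf property of $\mathcal{H}$. The non-commutativity of $R[\tau]$ means that $\Hom_{\mathrm{left}\text{-}R[\tau]}(M(G), R[\tau])$ does not generally commute with base change; in fact Rosenlicht's example already exhibits a form $G$ for which both $N(G)$ and this Hom module vanish, while their $S$-base changes are both free of rank one. The descent argument sidesteps this by reading off the two sides as equalizers of $T$-valued sections, and it is precisely this sheaf-theoretic reformulation — rather than a naive ``tensor with $S$'' argument — that makes the reduction to the split case rigorous.
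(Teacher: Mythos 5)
Your proof follows essentially the same strategy as the paper: reduce by faithfully flat descent to the split case $G\cong\mathbb{G}_a^d$, after checking that the assignment $T\mapsto\Hom_{\mathrm{left}\text{-}T[\tau]}(M(G_T),T[\tau])$ is a sheaf for the flat topology via the tensor-Hom adjunction and the compatibility of $M$ with flat base change. Your treatment is a bit more explicit in spelling out the split-case duality and the well-definedness, but the mathematical content coincides with the paper's argument.
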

\begin{proof}
The assignments $S\mapsto N(G_S):=\Hom_{\mathbb{F}/S}(\mathbb{G}_{a,S},G_S)$ and $S\mapsto M(G_S)$ are well-known to be sheaves for the flat topology. We claim that
\begin{equation}\label{eq:hom-sheaf}
S\longmapsto \Hom_{\mathrm{left}-S[\tau]}(M(G_S),S[\tau])
\end{equation}
is one as well. To see this, fix a faithfully flat ring homomorphism $R\to S$ and consider the left-exact sequence of $R$-modules $0\to R\to S\to S\otimes_R S$. Right-tensoring this sequence along the flat map $R\to R[\tau]$ and applying the left-exact functor $\Hom_{\mathrm{left}-R[\tau]}(M(G),-)$, we get an exact sequence
\[
0\longrightarrow \Hom_{\mathrm{left}-R[\tau]}(M(G),R[\tau])\longrightarrow \Hom_{\mathrm{left}-R[\tau]}(M(G),S[\tau])\longrightarrow  \Hom_{\mathrm{left}-R[\tau]}(M(G),S\otimes_R S[\tau]).
\]
By tensor-hom adjunction and the fact that $M$ commutes with flat base change, this sequence coincides with the descent sequence for \eqref{eq:hom-sheaf} along $R\to S$. Therefore \eqref{eq:hom-sheaf} is a sheaf for the flat topology.

From this claim we may argue by descent and thus it suffices to prove the statement under the assumption that $G$ is isomorphic to $\mathbb{G}_a^d$, which is then clear.
\end{proof}

\begin{Remark}
Note however that the map $M(G)\to \Hom_{\mathrm{right}-R[\tau]}(N(G),R[\tau])$ is generally not an isomorphism, \emph{e.g.} for Rosentlich's example. This is because the latter, as opposed to \eqref{eq:hom-sheaf}, does not satisfy descent.
\end{Remark}

\begin{Proposition}
$M(G)$ is of finite presentation and flat as a left-$R[\tau]$-module.
\end{Proposition}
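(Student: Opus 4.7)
The plan is to argue by faithfully flat descent from the trivial form $G = \mathbb{G}_{a,R}^d$. First, if $G \cong \mathbb{G}_{a,R}^d$, the identification $R[\tau] \cong \End_{\mathbb{F}/R}(\mathbb{G}_{a,R})$ recalled above, combined with the distributivity of $\Hom$ over products, yields $M(G) \cong R[\tau]^d$ as a left $R[\tau]$-module. This is finite free, hence both finitely presented and flat.

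For a general form $G$ of $\mathbb{G}_a^d$, I would fix a faithfully flat ring homomorphism $R \to S$ such that $G_S \cong \mathbb{G}_{a,S}^d$. Applying the previous step over $S$ gives $M(G_S) \cong S[\tau]^d$, and by the base-change compatibility of $M$ (Remark 3.3 in \cite{hartl}), I can identify $M(G_S) \cong S[\tau]\otimes_{R[\tau]} M(G)$ as a left $S[\tau]$-module. A preliminary observation I will need is that $R[\tau]\to S[\tau]$ is itself faithfully flat; this follows from the natural $(S,R[\tau])$-bimodule isomorphism $S[\tau]\cong S\otimes_R R[\tau]$ (with $S$ acting on the left factor and $R[\tau]$ by right multiplication on the right factor), which transports the faithful flatness of $R\to S$ to that of $R[\tau]\to S[\tau]$.

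I would then conclude by invoking the noncommutative analogue of faithfully flat descent: the properties \emph{flat} and \emph{finitely presented} for a left $R[\tau]$-module $N$ are detected by the associated left $S[\tau]$-module $S[\tau]\otimes_{R[\tau]} N$ (flatness because $S[\tau]\otimes_{R[\tau]} -$ is a faithfully exact functor; finite presentation by descending a finite presentation of $M(G_S)$ back to $R[\tau]$, as in the commutative case). Applied to $N=M(G)$, this yields the desired conclusion, since $S[\tau]\otimes_{R[\tau]} M(G) \cong S[\tau]^d$ is obviously finitely presented and flat. The main technical obstacle is careful bookkeeping of the non-commutative bimodule structures; in particular, establishing $S[\tau]\cong S\otimes_R R[\tau]$ requires tracking the Frobenius twist $\tau s = s^q \tau$, and similarly for the identification $S[\tau]\otimes_{R[\tau]} M(G) \cong S\otimes_R M(G)$. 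Once these are set up, the descent step itself is standard.
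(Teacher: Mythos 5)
Your proposal follows the paper's proof essentially verbatim: pass to a faithfully flat $R\to S$ trivializing $G$, identify $S[\tau]\otimes_{R[\tau]} M(G)\cong S\otimes_R M(G)\cong M(G_S)\cong S[\tau]^d$, and invoke noncommutative faithfully flat descent along $R[\tau]\to S[\tau]$, which the paper likewise handles by citing the commutative Stacks lemma 058Q and asserting it carries over. Your extra bookkeeping of the $(S,R[\tau])$-bimodule structure and the right-faithful-flatness of $R[\tau]\to S[\tau]$ is a welcome elaboration; the one slightly loose clause is the claim that flatness descends ``because $S[\tau]\otimes_{R[\tau]}-$ is faithfully exact'' — flatness of $M(G)$ concerns exactness of the \emph{other} tensor functor $-\otimes_{R[\tau]}M(G)$, and the commutative proof's key identity $(N\otimes_R M)\otimes_R S\cong(N\otimes_R S)\otimes_S(M\otimes_R S)$ needs a Frobenius-aware reformulation here — but this is precisely the same level of terseness as the paper's own invocation of 058Q, so the two proofs are genuinely the same.
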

\begin{proof}
Let $R\to S$ be a faithfully flat ring homomorphism on which $G$ splits. Since $S\otimes_R R[\tau]\cong S[\tau]$, we have as right-$S[\tau]$-modules:
\[
S[\tau]\otimes_{R[\tau]} M(G) \cong S\otimes_R M(G)\cong M(G_S)\cong S[\tau]^d. 
\]
The classical descent arguments carry to the non-commutative situation \href{https://stacks.math.columbia.edu/tag/058Q}{[Stack Project: 058Q]}. 
\end{proof}

\begin{Corollary}\label{cor:N-commutes-with-flat-perfect-base-change}
The formation of $N$ commutes with flat base change among perfect rings. If $R$ is perfect, then $N(G)$ is finitely generated as a right-$R[\tau]$-module.
\end{Corollary}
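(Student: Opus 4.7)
The plan is to reduce both assertions to the isomorphism $N(G)\cong\Hom_{R[\tau]}(M(G),R[\tau])$ from the preceding proposition, combined with the fact that $M(G)$ is of finite presentation and flat as a left $R[\tau]$-module and that the formation of $M$ commutes with flat base change (Remark~3.3 of~\cite{hartl}).

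For the first assertion, let $R\to S$ be a flat homomorphism of perfect rings. I will chain
\[
N(G_S)\cong\Hom_{S[\tau]}\bigl(M(G_S),S[\tau]\bigr)\cong\Hom_{S[\tau]}\bigl(S[\tau]\otimes_{R[\tau]}M(G),S[\tau]\bigr)\cong\Hom_{R[\tau]}\bigl(M(G),S[\tau]\bigr),
\]
using the $M$-base-change formula together with extension-of-scalars adjunction. Finite presentation of $M(G)$ then yields, by a five-lemma argument applied to a presentation $R[\tau]^m\to R[\tau]^n\to M(G)\to 0$, the natural isomorphism
\[
\Hom_{R[\tau]}(M(G),R[\tau])\otimes_{R[\tau]}S[\tau]\cong\Hom_{R[\tau]}(M(G),S[\tau]),
\]
whence $N(G)\otimes_{R[\tau]}S[\tau]\cong N(G_S)$, which is the base-change statement.

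For the second assertion, I will pick any faithfully flat $R\to R'$ trivializing $G$ and replace $R'$ by its perfection: by Lemma~\ref{lem:perf-respects-faithfulllyflat} the resulting map is still faithfully flat over $R$, and it continues to trivialize $G$. Then $N(G_{R'})\cong R'[\tau]^d$ is finitely generated over $R'[\tau]$, and by the first assertion $N(G)\otimes_{R[\tau]}R'[\tau]\cong R'[\tau]^d$. A faithfully flat descent argument along $R[\tau]\to R'[\tau]$ for the finite generation property of right modules then gives the conclusion.

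The main delicacy I anticipate is keeping track of the left/right and bimodule structures needed to justify the non-commutative tensor-Hom compatibility used above, together with the faithful flatness of $R[\tau]\to R'[\tau]$ on the relevant side. Both rest crucially on perfectness of $R$ and $R'$, since the ring $R[\tau]$ exhibits the pathologies noted in footnote~\ref{foot:not-noetherian} over imperfect rings; once $R,R'$ are perfect, standard manipulations (compare with Proposition~\ref{prop:inverse-tau-perfectize}) allow these descent and tensor-Hom identifications to go through in much the same way as in the commutative setting.
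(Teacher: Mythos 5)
Your proposal is correct and follows essentially the same route as the paper: both chain $N(G_S)\cong\Hom_{S[\tau]}(M(G_S),S[\tau])\cong\Hom_{R[\tau]}(M(G),S[\tau])$, then use finite presentation of $M(G)$ together with flatness of $R[\tau]\to S[\tau]$ (which holds precisely because $S[\tau]\cong R[\tau]\otimes_R S$ when $R,S$ are perfect) to commute $\Hom$ with base change, and finally descend finite generation along a perfected faithfully flat trivialization. The only cosmetic difference is that you unpack the commutation of $\Hom$ and flat base change into an explicit five-lemma argument on a presentation $R[\tau]^m\to R[\tau]^n\to M(G)\to 0$, whereas the paper cites Bourbaki for this; the underlying argument is identical, and your closing remark correctly identifies the crucial role of perfectness in making $R[\tau]\to S[\tau]$ flat on the needed side.
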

\begin{proof}
We have $N(G_S)\cong \Hom_{S[\tau]}(M(G_S),S[\tau])\cong \Hom_{R[\tau]}(M(G),S[\tau])$. By \cite[X\S1 Proposition~8.(b)]{bourbaki}, the latter is isomorphic to $\Hom_{R[\tau]}(M(G),R[\tau])\otimes_{R[\tau]} S[\tau]$ if $R[\tau]\to S[\tau]$ is right-flat. This happens if $R$ and $S$ are perfect, as then $S[\tau]\cong R[\tau]\otimes_R S$. This proves the first assertion.

For the second assertion, we may use again that $R[\tau]\to S[\tau]$ is right-faithfully flat and that $N(G)\otimes_{R[\tau]}S[\tau]\cong S[\tau]^d$. 
\end{proof}

\subsection{Motives and comotives}
Let $R$ be an $\mathbb{F}$-algebra and let $\iota:A\to R$ be an $\mathbb{F}$-algebra homomorphism; we will voluntarily forget $\iota$ from notations. We consider the ring $A\otimes R$ where, as mentioned in the introduction, unlabelled tensor products are taken over $\mathbb{F}$. We let $\mathfrak{j}\subset A\otimes R$ be the ideal defined as the kernel of the multiplication map $A\otimes R\to R,~a\otimes r\mapsto \iota(a)r$. We let $\tau$ be the unique $A$-algebra endomorphism of $A\otimes R$ acting on $R$ as the $q$th power.
\begin{Definition}
An \emph{effective $A$-motive over $R$} is a pair $(M,\tau_M)$ where $M$ is a finite projective $A\otimes R$-module of constant rank and $\tau_M:\tau^*M\to M$ is an $A\otimes R$-linear map
% a $\tau$-semilinear endomorphism of $M$ 
whose cokernel is $\mathfrak{j}$-power torsion.

An \emph{effective $A$-comotive over $R$} is a pair $(N,\tau_N)$ where $N$ is a finite projective $A\otimes R$\nobreakdash-module of constant rank and $\tau_N$ is an $A\otimes R$-linear map $N\to \tau^*N$ whose cokernel is $\tau(\mathfrak{j})$-power torsion.
\end{Definition}

A large supply of $A$-motives and (respectively $A$-comotives) are obtained from \emph{abelian (respectively $A$-finite) Anderson $A$-modules} whose definition was recalled as Definition \ref{def:anderson-modules} above. Let $E$ be an Anderson $A$-module over $R$. We consider the following two groups of $\mathbb{F}$-vector space schemes homomorphisms over $R$, following subsection \ref{subsec:forms-of-Gad}:
\[
M(E):=\Hom_{\mathbb{F}/R}(E,\mathbb{G}_a) \quad \text{and} \quad N(E):=\Hom_{\mathbb{F}/R}(\mathbb{G}_a,E).
\] 
Both are naturally $A\otimes R$-modules where $A$ acts on $E$ and $R$ acts on $\mathbb{G}_a$. They also admit an action of the $q$-Frobenius $\mathrm{Frob}_{q}$ acting on $\mathbb{G}_a$ that we denote by $\tau$. While $M(E)$ defines a left $R[\tau]$-module, $N(E)$ defines a right $R[\tau]$-module. Accordingly, we obtain $A\otimes R$-linear maps
\[
\tau_M:\tau^*M(E)\longrightarrow M(E) \quad \text{and} \quad \tau_N:N(E)\longrightarrow \tau^*N(E).
\]
In addition, Condition \ref{item:Lie-compatible} of Definition \ref{def:anderson-modules} ensures that their cokernel are respectively $\mathfrak{j}$-power torsion and $\tau(\mathfrak{j})$-power torsion. However $(M(E),\tau_M)$ (respectively $(N(E),\tau_N)$) is not yet an $A$-motive (respectively an $A$-comotive) as the finite projective condition may not be fulfilled. 
\begin{Definition}
We say that $E$ is \emph{abelian} if $M(E)$ is finite projective of constant rank over $A\otimes R$. We say that $E$ is \emph{coabelian} (or \emph{$A$-finite}) if $N(E)$ is finite projective of constant rank over $A\otimes R$.
\end{Definition}

One goal of this work is to show that these two notions are equivalent whenever $R$ is perfect.\\

We end this section with well-known result of Anderson.
\begin{Lemma}\label{lem:finite-over-R[tau]}
Let $E$ be an Anderson $A$-motive over $R$. Then, there is an exact sequence of $A$-modules
\[
0\longrightarrow N(E)\xrightarrow{\id-\tau} N(E)\longrightarrow E(R)\longrightarrow 0.
\]
\end{Lemma}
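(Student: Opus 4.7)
The plan is to realise the claimed sequence as the image, under the functor $\Hom_{R[\tau]}(M(E),-)$ on left $R[\tau]$-modules, of a canonical short exact sequence intrinsic to $R[\tau]$. Throughout, I would use the identification $N(E) \cong \Hom_{R[\tau]}(M(E), R[\tau])$ from Subsection~\ref{subsec:forms-of-Gad}; under it, the endomorphism $\mathrm{id}-\tau$ of $N(E)$ corresponds to post-composition with right multiplication by $1-\tau$ on $R[\tau]$ (the latter being a morphism of left $R[\tau]$-modules since left and right multiplication in $R[\tau]$ commute).

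First, I would set up a natural isomorphism
\[
E(R) \;\cong\; \Hom_{R[\tau]}\!\bigl(M(E),\, \mathbb{G}_a(R)\bigr),
\]
where $\mathbb{G}_a(R)=R$ is viewed as a left $R[\tau]$-module with $\tau$ acting by the $q$-power Frobenius. The map sends $e \mapsto (m \mapsto m_R(e))$; it is $\tau$-equivariant because $(\tau m)(e) = m(e)^q$. It is an isomorphism in the split case $E = \mathbb{G}_a^d$ by inspection on the free $R[\tau]$-generators of $M(E)$, and the general case follows by fppf descent: both sides are fppf sheaves in the base---for the right-hand side, by the same sheaf argument as in the proof of the Proposition in Subsection~\ref{subsec:forms-of-Gad}, combined with the adjunction $\Hom_{S[\tau]}(M(E_S), S) \cong \Hom_{R[\tau]}(M(E), S)$.

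Next, a direct check establishes the short exact sequence of left $R[\tau]$-modules
\[
0 \longrightarrow R[\tau] \xrightarrow{\;\cdot(1-\tau)\;} R[\tau] \xrightarrow{\;\mathrm{ev}_1\;} \mathbb{G}_a(R) \longrightarrow 0,
\]
with $\mathrm{ev}_1\!\bigl(\sum_j a_j\tau^j\bigr) := \sum_j a_j$. Left $R[\tau]$-linearity of $\mathrm{ev}_1$ reduces to the identity $\mathrm{ev}_1(\tau p) = \mathrm{ev}_1(p)^q$; injectivity of right multiplication by $1-\tau$ follows from a coefficient-by-coefficient comparison; and middle exactness holds by explicit telescoping: given $p = \sum_{j\leq n} a_j\tau^j$ with $\sum_j a_j = 0$, the element $q = \sum_{j<n}\bigl(\sum_{k\leq j} a_k\bigr)\tau^j$ satisfies $q\cdot(1-\tau) = p$.

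Applying $\Hom_{R[\tau]}(M(E), -)$ to this SES and invoking the two identifications above yields precisely the sequence of the lemma, provided the functor is exact and not merely left exact. By the Proposition in Subsection~\ref{subsec:forms-of-Gad}, $M(E)$ is finitely presented and flat over $R[\tau]$; by Lazard's theorem (whose standard proof goes through for non-commutative rings: write $M(E)$ as a filtered colimit of finitely generated free modules, then factor the identity through one of them, making $M(E)$ a retract), this implies $M(E)$ is projective, so $\Hom_{R[\tau]}(M(E), -)$ is exact. The main subtle point is thus the invocation of Lazard's theorem over the non-commutative ring $R[\tau]$; the rest of the argument is purely formal bookkeeping once the identification of $E(R)$ with left $R[\tau]$-linear maps into $\mathbb{G}_a(R)$ is in place.
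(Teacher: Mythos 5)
Your proof is correct, but it follows a genuinely different route from the paper's. The paper argues concretely: it exhibits the map $N(E)\to E(R)$ as $n\mapsto n(1_R)$, checks surjectivity and $(\id-\tau)$-vanishing by hand, and then settles exactness by faithfully flat descent to the split case $E=\mathbb{G}_a^d$, where the sequence is already in Hartl--Juschka (Proposition~2.5.8). You instead package everything as the image under $\Hom_{R[\tau]}(M(E),-)$ of the canonical short exact sequence $0\to R[\tau]\xrightarrow{\cdot(1-\tau)} R[\tau]\xrightarrow{\mathrm{ev}_1}\mathbb{G}_a(R)\to 0$ of left $R[\tau]$-modules, using the identification $N(E)\cong\Hom_{R[\tau]}(M(E),R[\tau])$ from Subsection~\ref{subsec:forms-of-Gad} and a descent-established identification $E(R)\cong\Hom_{R[\tau]}(M(E),\mathbb{G}_a(R))$, and then invoking projectivity of $M(E)$ to carry exactness through the $\Hom$ functor. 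The key extra ingredient you need is that $M(E)$ is projective (not merely flat and finitely presented) over $R[\tau]$; your appeal to Lazard's theorem over a possibly non-commutative ring is legitimate (the equivalence ``finitely presented plus flat implies projective'' holds over any ring), but it is worth flagging since the paper's own Proposition only asserts flatness and finite presentation. Your approach is more structural and self-contained in that it does not outsource the split case to Hartl--Juschka, and it cleanly explains where the cokernel $E(R)$ ``comes from'' as a derived-style consequence of the exact sequence $0\to R[\tau]\to R[\tau]\to R\to 0$; what it costs is the extra step of upgrading flatness to projectivity. Both proofs ultimately lean on descent (yours to identify $E(R)$ in $\Hom$-theoretic terms, the paper's to transport exactness), so neither avoids that ingredient.
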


\begin{proof}
For the map $N(E)\to E(R)$, we take $n\mapsto n(1_R)$; it is clearly surjective (a preimage of $e\in E(R)$ being $(1\mapsto e)\in N(E)$), and its precomposition with $\id-\tau$ on $N(E)$ is zero as $1^q=1$. To prove that the sequence is exact, we use faithfully flat descent to reduce to the case where $E=\mathbb{G}_a^d$ which is already treated in \cite[Proposition~2.5.8]{hartl-juschka}.
\end{proof}

\section{Topologies}

Throughout the section, we let $R$ be a perfect $A$-algebra and we consider an abelian Anderson $A$-module $E$ over $R$. We let $M=M(E)$ be the motive of $E$. 
Recall that we earlier defined a norm on $R[\tau,\tau^{-1}]$ with respect to which the completion is $R\ls{\sigma}$, where $\sigma=\tau^{-1}$.

On the other hand, we let $K=\Frac(A)$, and let $\KI$ be the completion of $K$ with respect to the $\infty$-adic topology on $K$. Its ring of integers will be denoted by $\mathcal{O}_{\infty}$, and the maximal ideal in $\mathcal{O}_\infty$ by $\mathfrak{m}_\infty$.

As in \cite[\S 3.1]{gazda}, we define the completions
\[
\mathcal{A}_{\infty}(R):=\varprojlim_n (\mathcal{O}_{\infty}/\mathfrak{m}_{\infty}^n\otimes R), \quad \mathcal{B}_{\infty}(R):=\KI\otimes_{\mathcal{O}_{\infty}}\mathcal{A}_{\infty}(R).
\]

In this section, we investigate the modules
\begin{align*}
M\ls{\sigma} &:=R\ls{\sigma}\otimes_{R[\tau]}M,\quad \text{and}\\
\mathcal{B}_{\infty}(M) &:= M\otimes_{A\otimes R} \mathcal{B}_{\infty}(R).
\end{align*}

We define topologies on each of them, and show that $\mathcal{B}_{\infty}(R)$ (resp.~$R\ls{\sigma}$) acts continuously on $M\ls{\sigma}$ (resp.~on $\mathcal{B}_{\infty}(M)$).
Finally, we deduce that we indeed have a homeomorphism connecting both which is compatible with the $R\ls{\sigma}$-action and the $\mathcal{B}_{\infty}(R)$-action. This generalizes \cite[Proposition 7.8]{maurischat}.

\medskip

The very same statements can be given for the comotive $N(E)$ of a coabelian Anderson $A$-module $E$ by switching from left-actions to right-actions of the non-commutative rings. The proofs are similar enough so that we skip those.

\subsection{The $\sigma$-adic topology}\label{sec:sigma-top}

We start with the topology on $M\ls{\sigma}$.

Note that $M\ls{\sigma}$ is finitely generated as a module over $R\ls{\sigma}$ by Lemma \ref{lem:finite-over-R[tau]}.

\begin{Definition}
A \emph{$R\ps{\sigma}$-lattice} in $M\ls{\sigma}$ is a finitely generated $R\ps{\sigma}$-submodule $\Lambda$ of $M\ls{\sigma}$ that contains a finite generating subset of  $M\ls{\sigma}$ over $R\ls{\sigma}$.
\end{Definition}

The notion of lattices allows us to define a linear topology on $M\ls{\sigma}$. Let $\Lambda$ be an $R\ps{\sigma}$-lattice in $M\ls{\sigma}$; we call \emph{the $\sigma$-topology on $M\ls{\sigma}$} the one given by the fundamental system of open submodules $(\sigma^n \Lambda)_{n\geq 0}$. This topology does not depend on the choice of the lattice $\Lambda$, as the next lemma shows.

\begin{Lemma}\label{lem:inclusion-of-lattices}
Given two $R\ps{\sigma}$-lattices $\Lambda,\Lambda'$ in $M\ls{\sigma}$, there are integers $n_1\geq n_2$ such that
\[   \sigma^{n_1}\Lambda\subseteq \Lambda' \subseteq \sigma^{n_2}\Lambda.\]
\end{Lemma}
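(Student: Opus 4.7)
The plan is to unravel the definition of an $R\ps{\sigma}$-lattice and reduce the claim to a ``clearing denominators'' argument in $R\ls{\sigma}$; once the first inclusion is established the second one will follow by symmetry.

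First, I would choose finite generating sets: let $\lambda_1,\dots,\lambda_k$ generate $\Lambda$ over $R\ps{\sigma}$ and simultaneously generate $M\ls{\sigma}$ over $R\ls{\sigma}$ (which is possible, up to enlarging the generating family, since $\Lambda$ contains by definition an $R\ls{\sigma}$-generating set and $\Lambda$ itself is $R\ps{\sigma}$-finite); similarly let $m'_1,\dots,m'_\ell$ generate $\Lambda'$ over $R\ps{\sigma}$. Because $R\ls{\sigma}\cdot \{\lambda_i\} = M\ls{\sigma}$, we can write each $m'_j = \sum_i r_{ij}\lambda_i$ with $r_{ij}\in R\ls{\sigma}$.

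The key step is then to observe that since only finitely many coefficients $r_{ij}$ are involved, the integer $D:=\max_{i,j}\{\deg_\tau(r_{ij}),0\}$ is finite, and by the unique $\sigma$-expansion of Remark~\ref{rem:sigma-expansion} we have $\sigma^D r_{ij}\in R\ps{\sigma}$ for all $i,j$. Using the commutation $R\ps{\sigma}\sigma = \sigma R\ps{\sigma}$ (valid because $R$ is perfect), multiplying on the left by $\sigma^D$ gives
\[
\sigma^D \Lambda' \;=\; \sum_{j}R\ps{\sigma}\,\sigma^D m'_j \;=\; \sum_{j}R\ps{\sigma}\sum_i(\sigma^D r_{ij})\lambda_i \;\subseteq\; \Lambda,
\]
which rewrites as $\Lambda'\subseteq \sigma^{-D}\Lambda$. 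Thus $n_2:=-D$ works.

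Exchanging the roles of $\Lambda$ and $\Lambda'$ in the previous paragraph produces some $D'\geq 0$ with $\sigma^{D'}\Lambda\subseteq\Lambda'$; setting $n_1:=D'$ finishes the proof, noting $n_1\geq 0\geq n_2$ so in particular $n_1\geq n_2$. The only potential subtlety, and what I view as the main (minor) obstacle, is the careful handling of the noncommutativity when shifting $\sigma$-powers across $R$-coefficients and across the $\lambda_i$; but this is harmless because perfection of $R$ turns $\sigma^D\cdot R\ps{\sigma}$ into $R\ps{\sigma}\cdot\sigma^D$, so the $\sigma^D$ may be absorbed into the $R\ps{\sigma}$-module structure without difficulty.
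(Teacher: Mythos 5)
Your proof is correct and takes essentially the same approach as the paper's: express generators of one lattice as $R\ls{\sigma}$-linear combinations of generators of the other, then shift by a sufficiently large power of $\sigma$ to clear negative $\sigma$-degrees. The only cosmetic difference is that you arrange $n_1\geq 0\geq n_2$ by construction, whereas the paper deduces $n_1\geq n_2$ after the fact from the inclusion chain; both are fine.
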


\begin{proof}
Since $\Lambda$ contains a finite generating subset $G$ of $M\ls{\sigma}$, every element of a generating set for $\Lambda'$ is an $R\ls{\sigma}$-linear combination of elements of $G$. Choosing $n_2$ such that all the coefficients of those linear combinations are in $\sigma^{n_2}R\ps{\sigma}$ implies $\Lambda'\subseteq \sigma^{n_2}\Lambda$. For the other inclusion, switch the roles of $\Lambda$ and $\Lambda'$ in the previous argument to obtain an integer $-n_1$ with $\Lambda\subseteq \sigma^{-n_1}\Lambda'$ so that $\sigma^{n_1}\Lambda\subseteq \Lambda'$.

The inequality $n_1\geq n_2$ is a consequence of the inclusion $\sigma^{n_1}\Lambda\subseteq  \sigma^{n_2}\Lambda$.
\end{proof}

\begin{Lemma}\label{lem:abelian-implies-sigma-lattice}
Let $a\in A$ be an element of positive degree. Then, there exists an $R\ps{\sigma}$-lattice $\Lambda\subset M\ls{\sigma}$ and an integer $\ell>0$ such that
\[
\text{for~all~}n\geq \ell: \quad \tau \Lambda\subset a^n\Lambda
\]
\end{Lemma}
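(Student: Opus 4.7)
The plan is to choose a lattice from $R[\tau]$-generators of $M$ and to control the action of a suitable power of $a$ via its matrix representation.

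First, I would invoke Anderson's classical theorem (combined with the left-Noetherianity of $R[\tau]$ for $R$ perfect, cf.~footnote~\ref{foot:not-noetherian}) to obtain generators $m_1,\dots,m_s$ of $M$ as a left $R[\tau]$-module, and set $\Lambda_0 := R\ps{\sigma}m_1+\cdots+R\ps{\sigma}m_s \subset M\ls{\sigma}$. Because $\tau_M$ is $(A\otimes R)$-linear, the $A$-action on $M$ commutes with the $R[\tau]$-action, so $a \in A$ acts on the generators through a matrix $C_a \in \Mat_s(R[\tau])$. The central claim I would establish is that, for $a$ of positive degree, there exist integers $k,e \geq 1$ with $C_{a^k}$ a unit in $\Mat_s(R\ls{\sigma})$ and $(C_{a^k})^{-1} \in \sigma^e \Mat_s(R\ps{\sigma})$. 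This yields by iteration $(C_{a^k})^{-n} \in \sigma^{en} \Mat_s(R\ps{\sigma})$ and hence $\tau \cdot (C_{a^k})^{-n} \in \Mat_s(R\ps{\sigma})$ for $n \geq \lceil 1/e \rceil$; in lattice terms, $\tau \Lambda_0 \subset a^{nk}\Lambda_0$ for such $n$.

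To extend this to $\tau \Lambda \subset a^m\Lambda$ for all $m\geq \ell$ (not just multiples of $k$), I would replace $\Lambda_0$ by the enlarged lattice $\Lambda := \Lambda_0 + a^{-1}\Lambda_0 + \cdots + a^{-(k-1)}\Lambda_0$. This remains finitely generated: the inclusion $a^{-k}\Lambda_0 \subset \Lambda_0$ (consequence of $(C_{a^k})^{-1} \in \Mat_s(R\ps{\sigma})$) truncates the formal sum $\sum_{r \geq 0} a^{-r}\Lambda_0$. Moreover $a^{-1}\Lambda \subset \Lambda$, so the family $(a^m\Lambda)_{m\geq 0}$ is non-decreasing. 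Using that $a$ and $\tau$ commute on $M\ls{\sigma}$, one then computes $\tau\Lambda = \sum_{r=0}^{k-1}a^{-r}\tau\Lambda_0 \subset \sum_{r=0}^{k-1} a^{nk-r}\Lambda_0 \subset a^{nk}\Lambda$, and monotonicity extends this to $\tau\Lambda \subset a^m\Lambda$ for all $m \geq \ell := k\lceil 1/e \rceil$.

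The hardest part will be the central claim on $(C_{a^k})^{-1}$. Reducing to the case $E = \Ga^d$ via faithfully flat descent (Lemma~\ref{lem:perf-respects-faithfulllyflat}), the matrix $C_a$ becomes the Anderson matrix $\Phi_a$. The desired invertibility together with the inclusion $(C_{a^k})^{-1} \in \sigma^e\Mat_s(R\ps{\sigma})$ then reduces---via the commutation rule $\tau b = b^q \tau$ and a geometric-series expansion---to the invertibility in $\Mat_s(R)$ of the leading $\tau$-coefficient of $\Phi_{a^k}$. Although this can fail for $k=1$ (witness the Maurischat example of Section~\ref{sec:applications}, where $\Phi_t$ has rank-one leading matrix but $\Phi_{t^2}$ has invertible leading matrix), the combination of abelianness of $M$ and the Anderson nilpotence condition on $\mathrm{Lie}_E(a) - \iota(a)$ forces invertibility of the leading coefficient of $\Phi_{a^k}$ for $k$ sufficiently large.
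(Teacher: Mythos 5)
There is a genuine gap in your central claim, and your proposed argument for it would fail. You assert that ``the combination of abelianness of $M$ and the Anderson nilpotence condition forces invertibility of the leading coefficient of $\Phi_{a^k}$ for $k$ sufficiently large,'' and cite the Maurischat example as evidence, claiming $\Phi_{t^2}$ has invertible leading matrix there. That last claim is incorrect unless $p=2$. For $\Phi(t)=\begin{pmatrix}\theta+\tau^2 & \tau^3 \\ 1+\tau & \theta+\tau^2\end{pmatrix}$, the degree (in $\tau$) profile of $\Phi(t)^k$ is $\begin{pmatrix}2k & 2k+1 \\ 2k-1 & 2k\end{pmatrix}$, and a short induction on the top coefficients shows that the $\tau^{2k+1}$-coefficient of the $(1,2)$-entry is $2^{k-1}$ up to Frobenius twists. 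So for $p\neq 2$ the maximal $\tau$-degree $2k+1$ is achieved only in the $(1,2)$-entry for every $k$, the leading $\tau$-coefficient matrix of $\Phi(t^k)$ is $\begin{pmatrix}0 & 2^{k-1} \\ 0 & 0\end{pmatrix}$, and that is nilpotent for all $k$. Invertibility of the leading $\tau$-coefficient of $\Phi(a^k)$ is essentially a purity condition, and the Maurischat example was designed precisely as a non-pure Anderson module; the geometric-series inversion you plan to use therefore has no starting point. (Your lattice enlargement step and the monotonicity argument are fine once the central claim is granted --- but the claim itself is the heart of the lemma and is left unsupported.)

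The paper's proof sidesteps this entirely by a different structural choice. Instead of trying to invert the matrix $C_a$ of the $a$-action written over $R[\tau]$, it chooses a finite $R$-submodule $V$ of $M$ whose elements generate $M$ simultaneously as an $R[\tau]$-module and as an $R[a]$-module (possible because $M$ is finite projective over $A\otimes R$ and finite over $R[\tau]$). One then expresses $\tau v_i=\sum_j p_{ij}v_j$ with coefficients $p_{ij}\in R[a]$, i.e.\ the \emph{$\tau$-action} is written in terms of the \emph{$a$-action}, which is the transpose of what you set up. Writing the coefficient matrix as $P=P_0+P_1a+\cdots+P_\ell a^\ell$ and moving the $a^0$-part to the left gives $(1-P_0\sigma)\tau v=a\bigl(\sum_{k>0}P_ka^{k-1}\bigr)v$, and the only inversion needed is $(1-P_0\sigma)^{-1}$, which always converges in $\Mat(R\ps{\sigma})$ since $P_0\sigma$ is topologically nilpotent --- no hypothesis on any leading $\tau$-coefficient is required. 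This is the idea you are missing.
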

\begin{proof}
Let $V\subset M$ be a finitely generated $R$-submodule which both generates $M$ as an $R[\tau]$\nobreakdash-module and an $R[a]$-module, where $R[a]$ is understood as a subring of $A\otimes R$. Consider $\Lambda_0$ the $R[\![\sigma]\!]$-lattice in $M(\!(\sigma)\!)$ generated by $S$. Let $(v_i)_{i\in I}$ be a finite $R$-generating subset of $V$. For $i\in I$, we write
\begin{equation}\label{eq:tau*s-expression}
\tau v_i =\sum_{j\in I}{p_{ij}\cdot v_j} 
\end{equation}
for coefficients $p_{ij}\in R[a]$ indexed by $I^2$. Let also $\ell:=\max_{i,j}\{\deg_a(p_{ij})\}$ and set 
\[
\Lambda=\sum_{\ell>k\geq 0}{a^k\Lambda_0}.
\]
We claim that $\ell$ and $\Lambda$ are as sought, \emph{i.e.}, that $\tau\Lambda\subset a^n\Lambda$ for all $n\geq \ell$. Indeed, let $P$ be the matrix $(p_{ij})_{i,j}$ with coefficients in $R[a]$ and indexed by $I^2$ which we write as 
\[
P=P_0+P_1 a+\ldots+P_\ell a^\ell \quad \text{for}\quad P_k\in \mathcal{M}_{I}(R).
\] 
From \eqref{eq:tau*s-expression}, we obtain
\[
(1-P_0 \sigma)\tau (v_i)_{i\in I}=(\tau-P_0) (v_i)_{i\in I} = \left(\sum_{\ell\geq k>0}{P_k a^k}\right) (v_i)_{i\in I}
\]
and then
\[
\tau (v_i)_{i\in I}=a(1-P_0\sigma)^{-1}\left(\sum_{\ell\geq k>0}{P_k a^{k-1}}\right)(v_i)_{i\in I}.
\]
From this expression we deduce
\begin{equation}\label{eq:tauLambda-in-aLambda0}
\tau\Lambda_0\subseteq a\Lambda
\end{equation}
and then $\Lambda=\sum_{\ell>k\geq 0}{a^k \Lambda_0}=\Lambda_0+\sum_{\ell>k> 0}{a^k \Lambda_0}\subset \tau \Lambda_0+a\sum_{\ell>k\geq 0}{a^k \Lambda_0}$ which, using \eqref{eq:tauLambda-in-aLambda0}, gives:
\begin{equation}\label{eq:Lambda-in-aLambda}
\Lambda\subseteq a\Lambda.
\end{equation}
Inductively, for $k\in \{0,\ldots, \ell\}$, we have $\Lambda\subseteq a^k\Lambda\subseteq a^\ell\Lambda$ and then for $k\in \{0,\ldots,\ell-1\}$,
\[
\tau a^k\Lambda_0=a^k \tau \Lambda_0\subseteq a^{k+1}\Lambda \subseteq a^\ell \Lambda
\]
where we used \eqref{eq:tauLambda-in-aLambda0} for the first inclusion. Using \eqref{eq:Lambda-in-aLambda} inductively, gives the result as stated. 
\end{proof}

Lemma \ref{lem:abelian-implies-sigma-lattice} has the following consequence.
\begin{Corollary}\label{cor:K-automorphisms}
For any non-zero $a\in A$, the multiplication by $a$ is an isomorphism on $M\ls{\sigma}$. In particular, the $A$-action on $M\ls{\sigma}$ extends uniquely to a $K$-action on $M\ls{\sigma}$.
\end{Corollary}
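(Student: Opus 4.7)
The strategy is to deduce invertibility of multiplication by $a$ from Lemma~\ref{lem:abelian-implies-sigma-lattice}, addressing surjectivity and injectivity separately, and then to derive the $K$-action formally. One may assume $\deg(a) > 0$, since $\mathbb{F}^\times$ acts invertibly on any $\mathbb{F}$-vector space. Applying Lemma~\ref{lem:abelian-implies-sigma-lattice} supplies an $R\ps{\sigma}$-lattice $\Lambda \subset M\ls{\sigma}$ and an integer $\ell \geq 1$ with $\tau\Lambda \subset a^\ell\Lambda$; the proof of that Lemma also establishes the inclusion $\Lambda \subset a\Lambda$, so the sequence $\Lambda \subset a\Lambda \subset a^2\Lambda \subset \cdots$ is increasing in $M\ls{\sigma}$.

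For surjectivity, iterating yields $\tau^n\Lambda \subset a^{n\ell}\Lambda$. Since $\Lambda$ generates $M\ls{\sigma}$ over $R\ls{\sigma} = \bigcup_n \tau^n R\ps{\sigma}$, one has $M\ls{\sigma} = \bigcup_n \tau^n\Lambda \subseteq \bigcup_n a^n\Lambda$, hence $M\ls{\sigma} = \bigcup_n a^n\Lambda$. Any $x \in M\ls{\sigma}$ thus lies in some $a^n\Lambda$ with $n \geq 1$, giving $x = a \cdot (a^{n-1}v)$ for some $v \in \Lambda$; multiplication by $a$ is therefore surjective.

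Injectivity is the more delicate part, and my plan is to combine surjectivity with a stabilization argument. By Anderson's classical finiteness (in the spirit of Lemma~\ref{lem:finite-over-R[tau]}), $M$ is finitely generated over $R[\tau]$, so $M\ls{\sigma}$ is finitely generated over $R\ls{\sigma}$. For $R$ noetherian, $R\ls{\sigma}$ is left-noetherian by the Hilbert basis theorem for the Ore extension $R[\tau;\phi]$ combined with its $\sigma$-adic completion, whence $M\ls{\sigma}$ is a noetherian left $R\ls{\sigma}$-module. In the general perfect case, one reduces to the noetherian one by observing that $E$ is defined (as an abelian Anderson module) over a finitely generated perfect sub-$A$-algebra and that the formation of $M\ls{\sigma}$ is compatible with base change among perfect algebras. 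Once noetherianity is in hand, the ascending chain $\ker(a) \subseteq \ker(a^2) \subseteq \cdots$ stabilizes at some $\ker(a^N)$. For $y \in \ker(a)$, surjectivity of $a$ iterated $N$ times gives $y = a^N z$ for some $z \in M\ls{\sigma}$; then $a^{N+1}z = ay = 0$ forces $z \in \ker(a^{N+1}) = \ker(a^N)$, whence $y = a^N z = 0$. With multiplication by every non-zero $a$ thus invertible, the extension of the $A$-action to $K = \Frac(A)$ is immediate from the universal property of the fraction field.

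The main obstacle I anticipate is making the chain-condition step rigorous in full generality, since perfection does not preserve noetherianity (e.g.\ the perfection of $\mathbb{F}[t]$ is non-noetherian). The reduction argument therefore needs care both in selecting the finitely generated sub-$A$-algebra of definition and in verifying that the chain condition established there transfers to $M\ls{\sigma}$ over the original perfect $R$; alternatively, one could try to bypass the noetherian route entirely by establishing that $R\ls{\sigma}$ is right-flat over $R[\tau]$ and exploiting that $a$ is a non-zero-divisor on the $A\otimes R$-projective module $M$.
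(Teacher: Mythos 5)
Your surjectivity argument is correct and matches the paper's: Lemma~\ref{lem:abelian-implies-sigma-lattice} produces a lattice $\Lambda$ with $\tau\Lambda\subseteq a^n\Lambda$, so $a^n\Lambda$ contains an $R\ls{\sigma}$-generating set and multiplication by $a$ is onto.

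The injectivity argument as your main route, however, has a genuine gap that cannot be repaired the way you suggest. A noetherian perfect $\FF_q$-algebra is automatically a finite product of perfect fields: if $R$ is perfect, noetherian, and reduced (Lemma~\ref{lem:reduced}), then for any prime $\mathfrak p$ and nonzero $x\in R/\mathfrak p$ the ascending chain $(x)\subseteq(x^{1/p})\subseteq(x^{1/p^2})\subseteq\cdots$ must stabilize, which forces $x$ to be a unit; hence every prime is maximal and $R$ is Artinian reduced. In particular, there is no meaningful notion of a \emph{finitely generated} perfect sub-$A$-algebra of definition for $E$ --- such a ring would be a finite product of finite fields, which cannot receive the structure map $\iota:A\to R$ carrying, e.g., a transcendental $\theta$. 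So the proposed reduction collapses, and the noetherian/stabilization route does not reach the general perfect case at all; it handles precisely the case where $R$ is a finite product of perfect fields and nothing more.

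The correct route is exactly the one you flag at the end as an ``alternative,'' and it is in fact the paper's proof: since $E$ is abelian, $M$ is finite projective of constant rank over $A\otimes R$; any nonzero $a\in A$ is a non-zero-divisor on $A\otimes R$ (because $a:A\to A$ is injective with flat base change along $\FF\to R$), hence a non-zero-divisor on the projective module $M$; and since $R[\tau]\to R\ls{\sigma}$ is flat, tensoring preserves injectivity of $a\cdot$ on $M\ls{\sigma}=R\ls{\sigma}\otimes_{R[\tau]}M$. This is both shorter and avoids any chain-condition hypotheses. You should promote that fallback to the actual argument and drop the noetherian detour entirely.
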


\begin{proof}
The statement is clear for $a\in \FF\setminus \{0\}$. So we assume that $a$ has positive degree. 
By assumption $M$ is a finite projective $A\otimes R$-module. In particular, the multiplication by a non zero element $a\in A$ on $M$ is an injective $R[\tau]$-linear operation. Since $R[\tau]\to R\ls{\sigma}$ is flat, the multiplication by $a$ stays injective on $M\ls{\sigma}$.\\
By Lemma \ref{lem:abelian-implies-sigma-lattice}, there exists an $R\ps{\sigma}$-lattice in $M\ls{\sigma}$ and an integer $n\geq 0$ such that $\tau\Lambda\subseteq a^n\Lambda$.
In particular, $a^n\Lambda$ contains an $R\ls{\sigma}$-generating set of $M\ls{\sigma}$. This implies that multiplication with $a^n$ is surjective, and hence also multiplication with $a$ is surjective.
\end{proof}

Next, we upgrade Lemma \ref{lem:abelian-implies-sigma-lattice}. 
\begin{Proposition}\label{prop:st-lattice}
Let $\Lambda$ be an $R\ps{\sigma}$-lattice in $M\ls{\sigma}$. Let $a\in A$ be an element of positive degree. 
Then there is an integer $N>0$ such that
\[ \tau \Lambda \subseteq a^{N}\Lambda. \]
\end{Proposition}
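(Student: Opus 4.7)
The strategy is to bootstrap from Lemma \ref{lem:abelian-implies-sigma-lattice}, which produces a special lattice $\Lambda_0$ together with an integer $\ell>0$ satisfying $\tau\Lambda_0\subseteq a^{\ell}\Lambda_0$, and then transfer this inclusion to the arbitrary lattice $\Lambda$ through the commensurability relation furnished by Lemma \ref{lem:inclusion-of-lattices}. The essential algebraic fact I will exploit is that $a\in A$, $\tau$ and $\sigma$ commute pairwise as operators on $M\ls{\sigma}$: on $M(E)=\Hom_{\FF}(E,\Ga)$, the element $a$ acts by pre-composition with the $A$-structure of $E$ while $\tau$ acts by post-composition with $\Frob_{\Ga}$, so they commute; this commutation passes to $M\ls{\sigma}$, and since $\sigma\tau=\tau\sigma=1$ in $R\ls{\sigma}$, the element $a$ commutes with $\sigma$ as well.

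The first step is to iterate the inclusion of Lemma \ref{lem:abelian-implies-sigma-lattice}. Using $a\tau=\tau a$, one inductively obtains
\[
\tau^{k}\Lambda_0 \;=\; \tau^{k-1}(\tau\Lambda_0)\;\subseteq\; a^{\ell}\tau^{k-1}\Lambda_0 \;\subseteq\;\ldots\;\subseteq\; a^{k\ell}\Lambda_0, \qquad \text{for every }k\geq 0.
\]
The second step is to apply Lemma \ref{lem:inclusion-of-lattices} to $\Lambda$ and $\Lambda_0$, obtaining integers $n_1\geq n_2$ with $\sigma^{n_1}\Lambda_0\subseteq \Lambda\subseteq \sigma^{n_2}\Lambda_0$. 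Writing $\sigma^{n_2}=\sigma^{n_1}\tau^{n_1-n_2}$ and using the commutations above, I chain
\[
\tau\Lambda \;\subseteq\; \tau\sigma^{n_2}\Lambda_0 \;=\; \sigma^{n_2}\tau\Lambda_0 \;\subseteq\; a^{\ell}\sigma^{n_1}\tau^{n_1-n_2}\Lambda_0 \;\subseteq\; a^{\ell}\sigma^{n_1}a^{(n_1-n_2)\ell}\Lambda_0 \;=\; a^{\ell(n_1-n_2+1)}\sigma^{n_1}\Lambda_0 \;\subseteq\; a^{\ell(n_1-n_2+1)}\Lambda,
\]
so that $N:=\ell(n_1-n_2+1)$ satisfies the claim.

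I do not anticipate any serious obstacle: once the roles of $a$, $\tau$ and $\sigma$ are organised, the proof reduces to bookkeeping of inclusions. The only mild subtlety worth flagging is the free conversion between $\sigma$-powers and $\tau$-powers, which is legitimate because $\sigma=\tau^{-1}$ inside $R\ls{\sigma}$ (so $\sigma^{n_2}=\sigma^{n_1}\tau^{n_1-n_2}$ when $n_1\geq n_2$), together with the commutation of $a$ with both $\tau$ and $\sigma$ already noted. Corollary \ref{cor:K-automorphisms} is invoked implicitly to ensure that multiplication by $a^{N}$ is a well-defined operator on $M\ls{\sigma}$ that preserves inclusions.
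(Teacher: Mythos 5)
Your proof is correct and follows essentially the same route as the paper: bootstrap from Lemma~\ref{lem:abelian-implies-sigma-lattice} to get a special lattice $\Lambda_0$ satisfying $\tau\Lambda_0\subseteq a^{\ell}\Lambda_0$, iterate, and transfer to the arbitrary lattice $\Lambda$ via the commensurability of Lemma~\ref{lem:inclusion-of-lattices}. The only difference is cosmetic bookkeeping: the paper first normalizes by replacing $\Lambda_0$ with $\tau^k\Lambda_0$ so that $\Lambda\subseteq\Lambda_0$, then uses a single $n$ with $\Lambda_0\subseteq\tau^n\Lambda$ and conjugates by $\sigma^n$ at the end, whereas you keep both exponents $n_1\geq n_2$ from Lemma~\ref{lem:inclusion-of-lattices} and chain them directly; both yield an $N$ of the form $\ell\cdot(\text{commensurability gap}+1)$. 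One minor remark: Corollary~\ref{cor:K-automorphisms} is not actually needed here --- multiplication by $a^N$ preserves inclusions simply because it is a function, and the commutation of $a\in A$ with $\tau$ and $\sigma$ on $M\ls{\sigma}=R\ls{\sigma}\otimes_{R[\tau]}M$ is immediate because $a$ acts on the $M$-factor while $\tau,\sigma$ act on the $R\ls{\sigma}$-factor.
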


\begin{proof}
Let $\ell>0$ and $\Lambda_a$ be as in Lemma \ref{lem:abelian-implies-sigma-lattice}, namely
\[  \tau\Lambda_a \subseteq a^\ell\Lambda_a.\]
Up to replacing $\Lambda_a$ by $\tau^k\Lambda_a$ for some $k$ large enough, we may assume that $\Lambda \subseteq \Lambda_a$. Further, by Lemma \ref{lem:inclusion-of-lattices}, there is an integer $n\geq 0$ for which $\Lambda_a\subseteq \tau^n \Lambda$.
Hence,
\[  \tau^{n+1}\Lambda \subseteq \tau^{n+1}\Lambda_a \subseteq a^{(n+1)\ell}\Lambda_a\subseteq a^{(n+1)\ell}\tau^n\Lambda. \]
Multiplying the whole chain of inclusions by $\sigma^n$, gives the desired inclusion with $N=(n+1)\ell$. 
%An element $b\in A$ as desired is obtained by taking $b=a^N$ with $a$ and $N$ as before. 
\end{proof}

\begin{Proposition}\label{prop:continuous-Binfty-action}
The $K$-action on $M\ls{\sigma}$ extends uniquely to a continuous action of $\KI$ and even a continuous action of $\mathcal{B}_{\infty}(R)$. 
\end{Proposition}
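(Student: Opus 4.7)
The strategy is to extend the $K$-action first to $\KI$ and then to $\mathcal{B}_\infty(R)$ by a density-and-completeness argument. The continuity required for these extensions will be supplied by Proposition~\ref{prop:st-lattice}.

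The first task is to establish the following uniform continuity: for each $R\ps{\sigma}$-lattice $\Lambda\subset M\ls{\sigma}$ and each integer $k\geq 0$, there exists $L=L(\Lambda,k)$ such that $\{\lambda\in K\ens v_\infty(\lambda)\geq L\}\cdot \Lambda\subseteq \sigma^k\Lambda$. Fix $a\in A$ of positive degree. Since the $A$-action on $M\ls{\sigma}$ commutes with $\tau$ (as $\tau_M$ is $A\otimes R$-linear), it also commutes with $\sigma$; hence Proposition~\ref{prop:st-lattice} yields $N\geq 1$ with $\tau\Lambda\subseteq a^N\Lambda$, equivalently $a^{-N}\Lambda\subseteq\sigma\Lambda$, and iteration gives $a^{-nN}\Lambda\subseteq \sigma^n\Lambda$ for all $n\geq 0$. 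To upgrade this from powers of $a^{-1}$ to arbitrary $\lambda\in K$ of large $v_\infty$, one writes $\lambda=a^{-nN}\mu$ with $\mu\in K\cap\mathcal{O}_\infty$ and argues that the residual action of such $\mu$ inflates a lattice by only a bounded amount, i.e.\ $\mu\Lambda\subseteq \sigma^{-c}\Lambda$ for some $c=c(\Lambda)$ independent of $\mu$; combining gives $\lambda\Lambda\subseteq \sigma^{n-c}\Lambda$, which yields the desired bound.

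With this uniform continuity in hand, the density of $K$ in $\KI$ together with the completeness of $M\ls{\sigma}$ in its $\sigma$-topology (it is finitely generated over the complete ring $R\ls{\sigma}$) delivers a unique continuous extension to a $\KI$-action. Finally, this $\KI$-action commutes with the built-in $R$-action on $M\ls{\sigma}$, yielding a continuous $\KI\otimes R$-action; a last density/completeness step, using the definitions $\mathcal{A}_\infty(R)=\varprojlim_n(\mathcal{O}_\infty/\mathfrak{m}_\infty^n\otimes R)$ and $\mathcal{B}_\infty(R)=\KI\otimes_{\mathcal{O}_\infty}\mathcal{A}_\infty(R)$, gives the continuous $\mathcal{B}_\infty(R)$-action.

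I expect the main obstacle to lie in the uniform continuity in the second paragraph, specifically the passage from powers of $a^{-1}$ to general elements $\lambda\in K$ of large $v_\infty$. The subtlety is that $\mathcal{O}_\infty\cap K$ is a large subring containing many units not lying in $A$, and one must show that multiplying a lattice by such units cannot blow it up without bound. A plausible route is to replace $\Lambda$ by a slightly enlarged $R\ps{\sigma}$-lattice stabilized under a suitable $\mathcal{O}_\infty\cap K$-subring, exploiting the finite-projectivity of $M$ over $A\otimes R$ to keep the enlargement under control.
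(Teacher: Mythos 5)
You correctly identify the main obstacle, and it is a genuine gap that your proposal does not close. The claim that $\mu\Lambda\subseteq\sigma^{-c}\Lambda$ for all $\mu\in K\cap\mathcal{O}_\infty$ with $c$ depending only on $\Lambda$ cannot be obtained directly: the ring $K\cap\mathcal{O}_\infty$ is not finite-dimensional over $\bF$, and a priori (before continuity is established) the action of an element $\mu$ of small valuation is given by Corollary~\ref{cor:K-automorphisms} as an ``algebraic'' composite $\Phi(p)\circ\Phi(q)^{-1}$ with $p,q\in A$ of unbounded degree, for which no uniform lattice bound is evident. Replacing $\Lambda$ by an enlarged lattice stable under a sub\emph{ring} of $K\cap\mathcal{O}_\infty$, as you suggest, would require the enlargement to be finitely generated over $R\ps{\sigma}$ while absorbing infinitely many independent elements, which is exactly the circularity one must avoid.

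The paper breaks this circularity not by decomposing $\lambda\in K_\infty$ as (large power of $a^{-1}$) $\times$ (element of $K\cap\mathcal{O}_\infty$), but by expanding it $z$-adically with coefficients drawn from a \emph{finite} set. Concretely: by Proposition~\ref{prop:st-lattice} one picks $b\in A\setminus\bF$ with $b^{-1}\Lambda\subseteq\sigma\Lambda$, sets $z:=b^{-1}\in\mathcal{O}_\infty$, and notes that $\mathcal{O}_\infty/z\mathcal{O}_\infty$ is a finite-dimensional $\bF$-vector space; one chooses representatives $g_1,\ldots,g_\ell\in\mathcal{O}_\infty\cap K$ of a basis. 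Every $g\in\mathcal{B}_\infty(R)$ then has a unique convergent expansion $g=\sum_{j\geq -j_0}z^j\bigl(\sum_{i=1}^\ell g_i\otimes c_{ij}\bigr)$ with $c_{ij}\in R$. The crucial point is that the ``coefficients'' $g_i$ form a finite list, so Corollary~\ref{cor:K-automorphisms} applied to these finitely many elements yields a single $\nu$ with $g_i\Lambda\subseteq\sigma^{-\nu}\Lambda$ for all $i$; combined with $z^j\Lambda\subseteq\sigma^j\Lambda$, this forces convergence of $\sum_j(z^j\sum_i g_i\otimes c_{ij})\cdot m$ and gives both uniqueness and continuity. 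Your density-and-completeness skeleton is sound, and your use of Proposition~\ref{prop:st-lattice} to bound negative powers of $a$ is exactly what the paper does to bound $z^j$; what is missing is the pivot to a finite set of coset representatives in place of an arbitrary element of $K\cap\mathcal{O}_\infty$.
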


\begin{proof}
We settle some notations first. Let $\Lambda$ be an $R\ps{\sigma}$-lattice in $M\ls{\sigma}$. By Proposition \ref{prop:st-lattice}, there exists $b\in A\setminus \bF$ such that $b^{-1}\Lambda \subseteq \sigma\Lambda$.
Let $z:=b^{-1}\in \cO_\infty$ and let $g_1,\ldots, g_\ell\in \cO_\infty\cap K$ be representatives in $K$ of an $\bF$-basis of $\cO_\infty/z\cO_\infty$.
Then any $g\in \mathcal{B}_{\infty}(R)$ can uniquely be written as a convergent series
\[ g=\sum_{j=-j_0}^\infty{ z^{j}\left(\sum_{i=1}^\ell {g_i\otimes c_{ij}}\right)}  \]
for appropriate $j_0\in\ZZ$ and $c_{ij}\in R$.\\
By Corollary \ref{cor:K-automorphisms}, we know that each $g_i\in K$ acts as an automorphism on $M\ls{\sigma}$; in particular, $g_i\Lambda$ is also a $R\ps{\sigma}$-lattice in $M\ls{\sigma}$. Let $\nu$ be a large enough integer for which $g_i\Lambda \subseteq \sigma^{-\nu}\Lambda$ for all $i\in \{1,\ldots, \ell\}$. 

Back to the statement, we have to show that there exists a unique continuous dashed arrow making the following diagram commute
\[
\begin{tikzcd}
K\times M\ls{\sigma} \arrow[r,"\text{Cor.}~\ref{cor:K-automorphisms}"]\arrow[d] & M\ls{\sigma} \\
\mathcal{B}_{\infty}(R)\times M\ls{\sigma} \arrow[ur,dashed] & 
\end{tikzcd}
\]
Given such an action denoted by a dot, $m\in M\ls{\sigma}$ and $j\in \mathbb{Z}$, the expression 
\[
\left(z^{j}\sum_{i=1}^\ell {g_i\otimes c_{ij}}\right)\cdot m
\]
is uniquely determined because the element inside the parenthesis is in $K\otimes R$. For some integer $k_0$, we have $m\in \sigma^{-k_0} \Lambda$ and since
\begin{equation}\label{eq:inclusion-for-continuity}
\left(z^{j}\sum_{i=1}^\ell {g_i\otimes c_{ij}}\right)\cdot \sigma^{-k_0}\Lambda = \sigma^{-k_0}z^{j}\left(\sum_{i=1}^\ell {g_i\otimes c_{ij}}\right)\cdot \Lambda \subseteq \sigma^{-k_0}z^{j}\sigma^{-\nu}\Lambda \subseteq   \sigma^{-(\nu+k_0)+j}\Lambda,
\end{equation}
the following sum converges in $M\ls{\sigma}$:
\begin{equation}\label{eq:definition-of-dot}
\sum_{j=-j_0}^\infty{\left( z^{j}\sum_{i=1}^\ell {g_i\otimes c_{ij}}\right) \cdot m }
\end{equation}
and belongs to $\sigma^{-(\nu+k_0+j_0)}\Lambda$. Continuity of $\cdot$ enforces $g\cdot m$ to coincide with the above expression. This shows uniqueness. To prove existence, it remains to show that the assignation $(g,m)\mapsto \eqref{eq:definition-of-dot}$ is continuous; but this follows from \eqref{eq:inclusion-for-continuity} and \eqref{eq:definition-of-dot} as well.
\end{proof}

\subsection{The $\infty$-adic topology}\label{sec:infty-top}

We now change the roles of $R\ls{\sigma}$ and $\mathcal{B}_{\infty}(R)$, and
consider the scalar extension
\[ \mathcal{B}_{\infty}(M):= M\otimes_{A\otimes R}\mathcal{B}_{\infty}(R), \]
define a topology on it, and show that it admits a continuous $R\ls{\sigma}$-action extending the $R[\tau]$-action.

Note the canonical isomorphism $\mathcal{B}_{\infty}(\tau^*M)\cong \tau^* \mathcal{B}_{\infty}(M)$.

\begin{Definition}
An \emph{$\mathcal{A}_{\infty}(R)$-lattice} in $\mathcal{B}_{\infty}(M)$ is a finitely generated $\mathcal{A}_{\infty}(R)$-submodule which generates $\mathcal{B}_{\infty}(M)$ over $\mathcal{B}_{\infty}(R)$. 
\end{Definition}
\begin{Remark}
We do not require lattices to be locally-free as opposed to \cite[Definition 3.4.4]{mornev}.
\end{Remark}

Similarly, $\mathcal{A}_{\infty}(R)$-lattices are useful to define a topology. Let $\Lambda$ be a $\mathcal{B}_{\infty}(R)$-lattice in $\mathcal{B}_{\infty}(M)$; we call \emph{the $\infty$-topology on $\mathcal{B}_{\infty}(M)$} the one given by the fundamental system of open submodules $(\mathfrak{m}_{\infty}^n \Lambda)_{n\geq 0}$. This topology does not depend on the choice of the lattice $\Lambda$ as one shows in the same way as Lemma \ref{lem:inclusion-of-lattices}.\\

The connection between the $\sigma$-topology and the $\infty$-topology is made through the following lemma. 
\begin{Proposition}\label{prop:existence-of-sigmastablelattice}
Let $\Lambda$ be an $\mathcal{A}_{\infty}(R)$-lattice of $\mathcal{B}_{\infty}(M)$. Then, there exists an integer $s_0>0$ such that $\sigma^s\Lambda\subset \mathfrak{m}_{\infty}\Lambda$ for all $s\geq s_0$.
\end{Proposition}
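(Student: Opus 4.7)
By an analogue of Lemma~\ref{lem:inclusion-of-lattices} for $\mathcal{A}_\infty(R)$-lattices in $\mathcal{B}_\infty(M)$ (proved identically, with a uniformizer of $\mathfrak{m}_\infty$ playing the role of $\sigma$), any two such lattices are commensurable. It therefore suffices to exhibit a single lattice $\Lambda^\ast$ and some $s_0\geq 1$ with $\sigma^{s_0}\Lambda^\ast \subseteq \mathfrak{m}_\infty\Lambda^\ast$: since $\sigma$ acts as the identity on $\mathcal{O}_\infty\subseteq\mathcal{A}_\infty(R)$ (being inverse-Frobenius only on the $R$-factor), $\sigma$ commutes with multiplication by elements of $\mathfrak{m}_\infty$, so iterating together with the commensurability of $\sigma^j\Lambda^\ast$ and $\Lambda^\ast$ extends the bound to every $s \geq s_0'$ for a suitable $s_0'$.

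Next, I would set up the $\sigma$-action on $\mathcal{B}_\infty(M)$. Fix $a\in A$ of positive degree $d_a$. The Lie condition \ref{item:Lie-compatible} of Definition~\ref{def:anderson-modules} implies that $\mathrm{coker}(\tau_M)$ is annihilated by $(a-\iota(a))^n$ for some $n$. The factorization $a-\iota(a)=a(1-\iota(a)/a)$ in $\mathcal{A}_\infty(R)$, combined with $\iota(a)/a\in\mathfrak{m}_\infty^{d_a}$, shows the second factor is a unit of $\mathcal{A}_\infty(R)$. Hence $\tau_M \otimes \mathcal{B}_\infty(R)$ is surjective; together with injectivity (from flatness of $A\otimes R\to\mathcal{B}_\infty(R)$ and projectivity of $M$) it is an isomorphism, so $\sigma:=\tau_M^{-1}$ is defined on $\mathcal{B}_\infty(M)$. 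Working locally so that $M$ is free, choose a basis $V=\{v_1,\ldots,v_d\}$ over $A\otimes R$, and let $P\in\mathrm{Mat}_d(A\otimes R)$ be the matrix of $\tau$. Since $\mathrm{Fitt}_0(\mathrm{coker}\,\tau_M)=(\det P)$ and this cokernel is killed by $(a-\iota(a))^n$, there exists $g\in A\otimes R$ with $(a-\iota(a))^n=g\det P$; comparing leading $a$-coefficients forces the leading $a$-coefficient of $\det P$ to be a unit in $R$. Consequently, in $\mathcal{B}_\infty(R)$ one has $\det P = (\text{unit of }\mathcal{A}_\infty(R))\cdot a^\delta$ with $\delta\geq 1$, so $(\det P)^{-1}\in \mathfrak{m}_\infty^{\delta d_a}\mathcal{A}_\infty(R)^\times$. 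Setting $\Lambda^\ast=\mathcal{A}_\infty(R)\cdot V$, the $\sigma$-action in the basis $V$ is multiplication by $(P^\sigma)^{-1}$, whose determinant lies in $\mathfrak{m}_\infty^{\delta d_a}\mathcal{A}_\infty(R)^\times$.

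The main obstacle is that individual entries of $(P^\sigma)^{-1}$ need not lie in $\mathfrak{m}_\infty$; only their product (the determinant) does, so one iteration of $\sigma$ may not suffice. I would overcome this by iterating: the matrix representing $\sigma^s$ is a Frobenius-twisted product of $s$ successive inverse-Frobenius transforms of $(P^\sigma)^{-1}$, whose determinant has $v_\infty$ at least $s\delta d_a$, while its cofactor sizes grow only polynomially in $s$. Reducing modulo $\mathfrak{m}_\infty$, the iterate defines a Frobenius-semilinear endomorphism of the finite $R/\mathfrak{m}_\infty R$-module $\Lambda^\ast/\mathfrak{m}_\infty\Lambda^\ast$ whose determinant vanishes. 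A Fitting-decomposition argument for such operators over perfect base rings forces the ``iso part'' to be trivial, since any $\sigma$-stable direction with unit determinant would lift to an $\mathcal{A}_\infty(R)^\times$-eigendirection contradicting the determinant estimate. Hence $\sigma^{s_0}(V)\subseteq \mathfrak{m}_\infty\Lambda^\ast$ for $s_0$ sufficiently large, yielding the proposition via the reduction of the first paragraph.
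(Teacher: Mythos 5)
Your approach is genuinely different from the paper's, and it has a conceptual gap that I do not think can be patched without bringing in the same finiteness input the paper uses.

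The paper's argument hinges on the fact (Lemma~\ref{lem:finite-over-R[tau]}) that $M$ is finitely generated over $R[\tau]$; this is the input from the Anderson module that makes $\sigma$ ``contracting''. Concretely, the paper finds $N_0$ with $\sigma\Lambda_0 \subset \Lambda_0 + \tau\Lambda_0 + \cdots + \tau^{N_0}\Lambda_0$, uses this to build the $\sigma$-stable module $\Lambda_0^{\mathrm{st}} = \sum_{n\geq 0}\sigma^n\Lambda_0$, verifies $\sigma^{N_0}\Lambda_0^{\mathrm{st}}\subset\mathfrak{m}_\infty\Lambda_0^{\mathrm{st}}$ directly, and then transfers the estimate to $\Lambda$ by commensurability. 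Your proposal never invokes $R[\tau]$-finiteness; you only use that $M$ is finite projective over $A\otimes R$ with $\mathfrak{j}$-power-torsion cokernel. This is weaker — there are effective $A$-motives satisfying these conditions whose $\mathcal{B}_\infty(M)$ is isoclinic of slope $0$ and for which no lattice satisfies $\sigma^s\Lambda\subset\mathfrak{m}_\infty\Lambda$ — so some ingredient in your argument must be doing the work of ruling out such cases, and the one you propose does not.

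That ingredient is your final ``Fitting-decomposition'' step, and it is where the gap lies. First, as you note yourself, $\sigma$ does not preserve $\Lambda^\ast = \mathcal{A}_\infty(R)\cdot V$: the matrix $(P^\sigma)^{-1}$ has entries of possibly negative $v_\infty$. So there is no induced Frobenius-semilinear endomorphism of $\Lambda^\ast/\mathfrak{m}_\infty\Lambda^\ast$ to which a Fitting decomposition could be applied, and the reduction step is not well defined. Second, your growth comparison --- ``cofactor sizes grow only polynomially in $s$'' versus ``$v_\infty(\det)$ at least $s\delta d_a$'' --- is not correct as stated: if the entries of $(P^\sigma)^{-1}$ have $v_\infty \geq -c$ with $c>0$, then the entries of the iterated product $Q_s = Q\,Q^\sigma\cdots Q^{\sigma^{s-1}}$ may have $v_\infty$ as low as $-sc$, which decreases linearly, while $v_\infty(\det Q_s) \geq s\delta d_a$ increases linearly. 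A matrix whose determinant has large positive valuation may still have entries of large negative valuation (think of $\mathrm{diag}(t^{-N}, t^{2N})$ over $\FF_q\ls{1/t}$), so the determinant estimate alone does not force $Q_s$ into $\mathfrak{m}_\infty\cdot\mathrm{Mat}_d(\mathcal{A}_\infty(R))$. Third, the slope-decomposition intuition you invoke (``the iso part must be trivial'') is Dieudonné theory over a perfect \emph{field}; for a general perfect $R$, the residue $R/\mathfrak{m}_\infty R$ need not be a field and such a decomposition is not available without extra work. To repair the proof you would need to inject the $R[\tau]$-finiteness of $M$ at the point where you argue the iterates eventually contract; at that point the argument essentially collapses to the one in the paper.

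Two smaller points: (a) you assert $\delta\geq 1$ without justification --- this does hold because $\mathrm{coker}(\tau_M)$ is the nonzero co-Lie module of an Anderson module, but it should be said; and (b) in the reduction to a single lattice, note that a bound of the form $\sigma^{s_0}\Lambda^\ast\subset\mathfrak{m}_\infty\Lambda^\ast$ for one $s_0$ does not yet give the statement for all $s\geq s_0$ unless $\Lambda^\ast$ is $\sigma$-stable; the paper's $\Lambda_0^{\mathrm{st}}$ is constructed precisely to have this stability.
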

\begin{proof}
Because $M$ comes from an Anderson $A$-module over $R$, it is finitely generated as an $R[\tau]$-module by Lemma \ref{lem:finite-over-R[tau]}. In particular there exists a finite $R$-submodule $V\subset M$ for which
\begin{equation}\label{eq:Mfinite}
M=V+\tau V+\tau^2V+\ldots
\end{equation}
If $\Lambda_0\subset \mathcal{B}_{\infty}(M)$ denotes a finitely generated $\mathcal{A}_{\infty}(R)$-submodule of $\mathcal{B}_{\infty}(M)$ which generates it and contains $V$, \eqref{eq:Mfinite} implies:
\[
\mathcal{B}_{\infty}(M)=\Lambda_0+\tau\Lambda_0+\tau^2\Lambda_0+\ldots
\]
In particular, there exists some integer $N_0> 0$ for which we have both
\begin{equation}\label{eq:fundamental-inclusions}
\sigma\Lambda_0\subset \Lambda_0+\tau\Lambda_0+\ldots+\tau^{N_0}\Lambda_0, \quad \text{and}\quad \mathfrak{m}_{\infty}^{-1}\Lambda_0\subset \Lambda_0+\tau\Lambda_0+\ldots+\tau^{N_0}\Lambda_0.
\end{equation}
By induction, the first inclusion implies  $\sigma^n\Lambda_0\subset \Lambda_0+\tau\Lambda_0+\ldots+\tau^{N_0}\Lambda_0$ for all $n\geq 0$. Setting
\[
\Lambda^{\mathrm{st}}_0:=\sum_{n\geq 0}{\sigma^n \Lambda_0}
\]
gives a module stable by $\sigma$ such that $\Lambda^{\mathrm{st}}_0\subset \Lambda_0+\tau\Lambda_0+\ldots+\tau^{N_0}\Lambda_0$, and which contains $\Lambda_0$. Hence it is generating, and is contained in a finitely generated submodule; note that it is not necessarily a lattice as it may not be of finite type (typically if $R$ is not noetherian). Applying $\sigma^{N_0}$ to the second inclusion of \eqref{eq:fundamental-inclusions} yields $\mathfrak{m}_{\infty}^{-1}\sigma^{N_0}\Lambda_0 \subset \Lambda_0+\sigma \Lambda_0+\ldots+\sigma^{N_0} \Lambda_0$. Applying then $\sigma^n$ and summing over all $n\geq 0$ gives $\sigma^{N_0}\Lambda^{\mathrm{st}}_0\subset \mathfrak{m}_{\infty}\Lambda^{\mathrm{st}}_0$.

By construction of $\Lambda^{\mathrm{st}}_0$, there exists two integers $k_1$ and $k_2$ for which $\mathfrak{m}_{\infty}^{k_1}\Lambda^{\mathrm{st}}_0\subset \Lambda \subset \mathfrak{m}_{\infty}^{-k_2}\Lambda^{\mathrm{st}}_0$. Set $s_0:=N_0(k_1+k_2+1)$. Then for all $s\geq s_0$,
\[
\sigma^{s}\Lambda = \sigma^{N_0(k_1+k_2+1)+(s-s_0)}\Lambda\subset \mathfrak{m}_{\infty}^{-k_2}\sigma^{N_0(k_1+k_2+1)}\sigma^{(s-s_0)}\Lambda^{\mathrm{st}}_0\subset \mathfrak{m}_{\infty}^{-k_2}\mathfrak{m}_{\infty}^{k_1+k_2+1}\Lambda^{\mathrm{st}}_0\subset \mathfrak{m}_{\infty}\Lambda.
\]
In particular, the integer $N_1$ works as desired.
\end{proof}

Note that $\tau$ acts bijectively on $\mathcal{B}_{\infty}(M)$ as is shown, \emph{e.g.}, in \cite[Proposition 3.17]{gazda}. In particular,  $\mathcal{B}_{\infty}(M)$ is canonically an $R[\tau,\tau^{-1}]$-module. With the help of Proposition \ref{prop:existence-of-sigmastablelattice} and reasoning as in the proof of Proposition \ref{prop:continuous-Binfty-action}, we obtain:
\begin{Proposition}\label{prop:continuous-R((sigma))-action}
The left-action of $R[\tau,\tau^{-1}]$ on $\mathcal{B}_{\infty}(M)$ extends uniquely to a continuous left-action of $R(\!(\sigma)\!)$.
\end{Proposition}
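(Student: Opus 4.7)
My strategy is to repeat the argument of Proposition \ref{prop:continuous-Binfty-action} with the roles of the $\sigma$-adic and $\infty$-adic topologies interchanged, using Proposition \ref{prop:existence-of-sigmastablelattice} as the analogue of Proposition \ref{prop:st-lattice}. I fix an $\mathcal{A}_\infty(R)$-lattice $\Lambda \subset \mathcal{B}_\infty(M)$ and pick $s_0 > 0$ with $\sigma^{s_0}\Lambda \subset \mathfrak{m}_\infty\Lambda$, as supplied by Proposition \ref{prop:existence-of-sigmastablelattice}. Since the elements of $\mathcal{O}_\infty$ (in particular generators of $\mathfrak{m}_\infty$) are fixed by $\tau$, they commute with $\sigma$, so iteration yields $\sigma^{ks_0}\Lambda \subset \mathfrak{m}_\infty^k\Lambda$ for all $k \geq 0$.

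For $0 \leq j < s_0$, each $\sigma^j\Lambda$ is again an $\mathcal{A}_\infty(R)$-lattice: it is finitely generated (the images under $\sigma^j$ of a generating set of $\Lambda$ will do), it is stable under $\mathcal{A}_\infty(R)$ thanks to the twisted commutation $\alpha\sigma^j = \sigma^j\alpha^{\tau^{-j}}$ (which uses the perfection of $R$), and it generates $\mathcal{B}_\infty(M)$ over $\mathcal{B}_\infty(R)$ because $\sigma^j$ is bijective on $\mathcal{B}_\infty(M)$. By the analogue of Lemma \ref{lem:inclusion-of-lattices} for the $\infty$-topology, there is a uniform integer $C \geq 0$ with $\sigma^j\Lambda \subset \mathfrak{m}_\infty^{-C}\Lambda$ for $0 \leq j < s_0$. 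Combining the two estimates, $\sigma^i\Lambda \subset \mathfrak{m}_\infty^{\lfloor i/s_0\rfloor - C}\Lambda$ for all $i \geq 0$, so $\sigma^i\Lambda$ shrinks to zero in the $\infty$-topology as $i \to \infty$.

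Given $f = \sum_{i \geq r} \sigma^i a_{-i} \in R\ls{\sigma}$ (using Remark \ref{rem:sigma-expansion}) and $m \in \mathcal{B}_\infty(M)$ with $m \in \mathfrak{m}_\infty^{-\ell}\Lambda$, the above inclusion shows that the partial sums $\sum_{i=r}^N \sigma^i a_{-i} m$ form a Cauchy sequence for the $\infty$-topology; I define $f \cdot m$ to be their limit. When $f \in R[\tau,\tau^{-1}]$ the series is finite and the new action agrees with the old one. Continuity in both variables is read directly off the inclusions above by tracking the $\sigma$-order of $f$ and the $\mathfrak{m}_\infty$-order of $m$; uniqueness of the continuous extension then follows from the density of $R[\tau,\tau^{-1}]$ in $R\ls{\sigma}$, and the associativity identity $(fg)\cdot m = f\cdot(g\cdot m)$ follows by continuity from its validity on the dense subring $R[\tau,\tau^{-1}]$.

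The only genuinely non-routine step is checking that $\sigma^j\Lambda$ remains an $\mathcal{A}_\infty(R)$-submodule of $\mathcal{B}_\infty(M)$: because $\sigma$ does not commute with the $R$-action but only twists it via $\sigma a^q = a\sigma$, one has to promote this relation to the $\mathfrak{m}_\infty$-adic completion and use that the $q$-Frobenius is a continuous automorphism of $\mathcal{A}_\infty(R)$ (which relies on $R$ being perfect). Once this noncommutative bookkeeping is settled, everything else is parallel to Proposition \ref{prop:continuous-Binfty-action}.
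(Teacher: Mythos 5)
Your proposal is correct and takes exactly the route the paper has in mind: the paper states that the proof is obtained by transposing the argument of Proposition \ref{prop:continuous-Binfty-action} with Proposition \ref{prop:existence-of-sigmastablelattice} playing the role of Proposition \ref{prop:st-lattice}, which is precisely what you carry out (iterate the $\sigma$-stable-lattice inclusion, bound the finitely many intermediate shifts $\sigma^j\Lambda$ uniformly, then sum the resulting Cauchy series). One small slip: the twisted commutation should read $\alpha\sigma^j = \sigma^j\,\tau^j(\alpha)$ rather than $\sigma^j\alpha^{\tau^{-j}}$, though this does not affect the argument since $\tau^j$ still preserves $\mathcal{A}_{\infty}(R)$.
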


\subsection{Isomorphism of topological spaces}\label{sec:homeomorphism}

The next theorem is a surprising byproduct of most results of Sections \ref{sec:sigma-top} \& \ref{sec:infty-top}. It generalizes \cite[Proposition 7.8 (a)]{maurischat}.
\begin{Theorem}
The topological $R$-modules $\mathcal{B}_{\infty}(M)$ and $M\ls{\sigma}$ are canonically homeomorphic. 
\end{Theorem}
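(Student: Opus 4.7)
The plan is to construct two natural continuous maps
$$\alpha\colon M\ls{\sigma}\longrightarrow \mathcal{B}_{\infty}(M) \quad \text{and}\quad \beta\colon \mathcal{B}_{\infty}(M)\longrightarrow M\ls{\sigma}$$
and to show they are mutually inverse homeomorphisms. For $\alpha$, the natural $R[\tau]$-linear map $M\to \mathcal{B}_{\infty}(M)$ combined with the continuous $R\ls{\sigma}$-action of Proposition~\ref{prop:continuous-R((sigma))-action} factors, via the universal property of $M\ls{\sigma}=R\ls{\sigma}\otimes_{R[\tau]}M$, through an $R\ls{\sigma}$-linear map $\alpha\colon f\otimes m\mapsto f\cdot m$. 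Symmetrically, the $A\otimes R$-linear inclusion $M\to M\ls{\sigma}$ combined with the continuous $\mathcal{B}_{\infty}(R)$-action of Proposition~\ref{prop:continuous-Binfty-action} yields, through the universal property of $\mathcal{B}_{\infty}(M)=M\otimes_{A\otimes R}\mathcal{B}_{\infty}(R)$, a $\mathcal{B}_{\infty}(R)$-linear map $\beta\colon m\otimes g\mapsto g\cdot m$. By construction, both restrict to the identity on the image of $M$ inside either completion.

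For continuity of $\alpha$, I would fix $R[\tau]$-generators $m_1,\ldots,m_s$ of $M$ and consider the lattices $\Lambda_1:=\sum_i R\ps{\sigma}m_i$ in $M\ls{\sigma}$ and $\Lambda_2:=\sum_i \mathcal{A}_{\infty}(R)m_i$ in $\mathcal{B}_{\infty}(M)$. Proposition~\ref{prop:existence-of-sigmastablelattice} applied to $\Lambda_2$ yields $s_0>0$ with $\sigma^{s_0}\Lambda_2\subseteq \mathfrak{m}_\infty\Lambda_2$; hence $\Lambda_2':=\sum_{k=0}^{s_0-1}\sigma^k\Lambda_2$ is a lattice containing $R\ps{\sigma}\cdot\{m_i\}$. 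Consequently $\alpha(\Lambda_1)\subseteq \Lambda_2'$, and iteratively $\alpha(\sigma^n\Lambda_1)\subseteq \mathfrak{m}_\infty^{\lfloor n/s_0\rfloor}\Lambda_2'$, giving continuity. For $\beta$, tracing through the explicit convergent-series construction of the $\mathcal{B}_{\infty}(R)$-action in the proof of Proposition~\ref{prop:continuous-Binfty-action}, one shows that $\beta(\mathfrak{m}_{\infty}^k\Lambda_2)\subseteq \sigma^{k-c}\Lambda_1$ for a constant $c$ depending only on $\Lambda_1$ and the chosen generators, which yields continuity.

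Finally I would promote $\beta$ to $R\ls{\sigma}$-linearity. The key observation is that the $\tau$-action on both $\mathcal{B}_{\infty}(M)$ and $M\ls{\sigma}$ is $\mathcal{B}_{\infty}(R)$-semi-linear via the Frobenius twist $\Phi\colon\mathcal{B}_{\infty}(R)\to \mathcal{B}_{\infty}(R)$ extending $a\otimes r\mapsto a\otimes r^q$ on $A\otimes R$; explicitly, the formula $\tau(g x)=\Phi(g)\,\tau(x)$ holds on both sides. Since $\beta|_M=\mathrm{id}$ and both $\tau$-actions restrict to $\tau_M$ on $M$, we have $\beta\circ\tau=\tau\circ\beta$ on $M$; this relation propagates to all of $\mathcal{B}_{\infty}(M)$ through the semi-linearity formula together with the $\mathcal{B}_{\infty}(R)$-linearity of $\beta$. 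Hence $\beta$ is $R[\tau,\tau^{-1}]$-linear, and by continuity plus the density of $R[\tau,\tau^{-1}]$ in $R\ls{\sigma}$, even $R\ls{\sigma}$-linear. A mirror argument shows $\alpha$ is $\mathcal{B}_{\infty}(R)$-linear. Thus $\beta\circ\alpha$ is a continuous $R\ls{\sigma}$-linear endomorphism of $M\ls{\sigma}$ fixing the $R\ls{\sigma}$-generating set $M$, forcing $\beta\circ\alpha=\mathrm{id}$, and symmetrically $\alpha\circ\beta=\mathrm{id}$.

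The main subtlety lies in this last paragraph: the actions of $R\ls{\sigma}$ and $\mathcal{B}_{\infty}(R)$ do not commute on either module, so upgrading $\alpha$ and $\beta$ from ``single-ring-linearity'' to ``double-ring-linearity'' genuinely hinges on exploiting the Frobenius semi-linearity of $\tau$ uniformly on both sides, and then invoking density and continuity in the right order. The lattice bookkeeping establishing continuity is routine once Proposition~\ref{prop:existence-of-sigmastablelattice} is available.
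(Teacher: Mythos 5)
Your proposal follows essentially the same route as the paper's own proof: you construct the same two maps from the universal properties of the tensor products together with Propositions~\ref{prop:continuous-Binfty-action} and \ref{prop:continuous-R((sigma))-action} (your $\alpha$, $\beta$ are the paper's $\iota_2$, $\iota_1$), establish continuity by lattice bookkeeping via Propositions~\ref{prop:existence-of-sigmastablelattice} and~\ref{prop:st-lattice}, upgrade each map to linearity over the other ring, and conclude by comparing on the generating subset $M$. You are more explicit than the paper about the Frobenius-semi-linearity mechanism behind the double-linearity upgrade (which the paper compresses into a single sentence), and that is a genuine clarification. Two small imprecisions worth fixing: your $\Lambda_2:=\sum_i \mathcal{A}_{\infty}(R)m_i$ is only an $\mathcal{A}_{\infty}(R)$-lattice if the $m_i$ also generate $M$ over $A\otimes R$, so as in the paper you should choose a finite $R$-submodule of $M$ that generates simultaneously over $R[\tau]$ and over $A\otimes R$; and the continuity estimate $\beta(\mathfrak{m}_{\infty}^k\Lambda_2)\subseteq \sigma^{k-c}\Lambda_1$ is too optimistic in general, since passing from $\mathfrak{m}_\infty$-filtration to $\sigma$-filtration introduces a linear scaling factor (the paper gets $\iota_1(\mathfrak{m}_{\infty}^{Nk}\Lambda_{\infty})\subseteq \sigma^k\Lambda_{\sigma}^{\mathrm{st}}$ for some fixed $N$), although this does not affect continuity.
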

\begin{proof}

Proposition \ref{prop:continuous-Binfty-action} gives a unique continuous action of $\mathcal{B}_{\infty}(R)$ on $M\ls{\sigma}$. Hence, the universal property of the tensor product then gives a unique $\mathcal{B}_{\infty}(R)$-linear map $\iota_1$ making the following diagram commute:
\[
\begin{tikzcd}
\mathcal{B}_{\infty}(R)\times M \arrow[r]\arrow[d] & M\ls{\sigma} \\
\mathcal{B}_{\infty}(M)\arrow[ur,dashed,"\iota_1"']
\end{tikzcd}
\]
Similarly, Proposition \ref{prop:continuous-R((sigma))-action} gives a unique left-$R\ls{\sigma}$-linear map $\iota_2$ inserting in a commutative diagram
\[
\begin{tikzcd}
 & M\ls{\sigma}\arrow[dl,dashed,"\iota_2"'] \\
\mathcal{B}_{\infty}(M) & R\ls{\sigma}\times M \arrow[u]\arrow[l]
\end{tikzcd}
\]
We claim that $\iota_1$ and $\iota_2$ are continuous. Indeed, let $V\subset M$ be a finite $R$-submodule which is both generating for the $R[\tau]$ and the $A\otimes R$-module actions. Let $\Lambda_{\infty}\subset \mathcal{B}_{\infty}(M)$ be the $\mathcal{A}_{\infty}(R)$-lattice it generates, and $\Lambda_{\sigma}\subset M\ls{\sigma}$ the $R\ps{\sigma}$-lattice it generates. By Proposition \ref{prop:st-lattice} there exists $N>0$ large enough for which $\sigma^N\Lambda_{\infty}\subset \mathfrak{m}_{\infty}\Lambda_{\infty}$. We then set
\[
\Lambda_{\infty}^{\mathrm{st}}:=\sum_{n=0}^{N-1}{\sigma^n \Lambda_{\infty}}
\] 
which defines another $\mathcal{A}_{\infty}(R)$-lattice in $\mathcal{B}_{\infty}(M)$ which is stable under the action of $R[\![\sigma]\!]$. In particular $\iota_2(\Lambda_{\sigma})\subset \Lambda_{\infty}^{\mathrm{st}}$ and for any positive integer $k$
\[
\iota_2(\sigma^{Nk}\Lambda_{\sigma})=\sigma^{Nk} \iota_2(\Lambda_{\sigma})\subset \sigma^{Nk}\Lambda_{\infty}^{\mathrm{st}} \subset \mathfrak{m}_{\infty}^k\Lambda_{\infty}^{\mathrm{st}}
\]
proving the continuity of $\iota_2$. 

For $\iota_1$, the argument is similar: Proposition \ref{prop:st-lattice} implies the existence of $z\in \mathcal{O}_{\infty}\cap K$ such that $z\Lambda_{\sigma}\subset \sigma\Lambda_{\sigma}$. Setting
\[
\Lambda_{\sigma}^{\mathrm{st}}:=\sum_{g\in S}{g\Lambda_{\sigma}},
\]
where $S\subset K$ is a set of representative in $K$ of the quotient $\mathcal{O}_{\infty}/z\mathcal{O}_{\infty}$, we find $\mathfrak{m}_{\infty}^N\Lambda_{\sigma}^{\mathrm{st}}\subset \sigma \Lambda_{\sigma}^{\mathrm{st}}$ where $N$ is the positive integer for which $(z)=\mathfrak{m}_{\infty}^N$ as ideals of $\mathcal{O}_{\infty}$. This implies $\iota_1(\Lambda_{\infty})\subset \Lambda_{\sigma}^{\mathrm{st}}$ and 
\[
\iota_1(\mathfrak{m}_{\infty}^{Nk}\Lambda_{\infty})=\mathfrak{m}_{\infty}^{Nk} \iota_1(\Lambda_{\infty})\subset \mathfrak{m}_{\infty}^{Nk}\Lambda_{\sigma}^{\mathrm{st}} \subset \sigma^k\Lambda_{\sigma}^{\mathrm{st}}
\]
proving the continuity of $\iota_1$. In fact, this further shows that $\iota_1$ is also $R(\!(\sigma)\!)$-linear and $\iota_2$ is $\mathcal{B}_{\infty}(R)$-linear. 

That $\iota_1$ and $\iota_2$ are mutual inverse to each other then follows immediately from the fact that $\iota_1\circ \iota_2|_{M}=\iota_2\circ \iota_1|_{M}=\id_M$.
\end{proof}

\section{The residue-in-$\tau$ pairing}\label{sec:residue-in-tau}

We again assume that the $A$-algebra $R$ is perfect. 
We let $E$ be an Anderson $A$-module over $R$, $M(E)=\Hom(E,\mathbb{G}_a)$ be its motive and $N(E)=\Hom(\mathbb{G}_a,E)$ its comotive. For better readability, we denote by $\Phi:A\to \End_{\mathbb{F}}(E)$ the $A$-module scheme structure of $E$. 

Assuming that $E$ is abelian or coabelian, we reinterpret the pairing of Hartl and Juschka \cite[Theorem 2.5.13]{hartl-juschka} and construct an $A\otimes R$-linear map
\begin{equation}\label{eq:Res-tau}
\mathrm{Res}_{\tau}:\tau^{*}M(E)\otimes_{A\otimes R} N(E)\longrightarrow \ka\otimes R. 
\end{equation}

\subsection{Construction of $\mathrm{Res}_{\tau}$}

The construction of the map $\mathrm{Res}_{\tau}$ is done in several steps.
\paragraph{Step 1.} Given $m:E\to \mathbb{G}_a$ in $M(E)$ and $n:\mathbb{G}_a\to E$ in $N(E)$, we may compose them to obtain $m\circ n
\in \End(\Ga)=R[\tau]$. Better, we obtain a map 
\begin{equation}\label{eq:composition}
M(E)\times N(E)\longrightarrow \Hom_{\bF}(A,R[\tau]), \quad (m,n)\longmapsto (a\mapsto m\circ \Phi(a) \circ n) 
\end{equation}
into the $\bF$-linear homomorphism from $A$ to $R[\tau]$. It is, by design, $A$-bilinear and hence factors uniquely through $M(E)\otimes_A N(E)$.

\paragraph{Step 2.} If $E$ is abelian we may upgrade the previous construction thanks to Corollary \ref{cor:K-automorphisms} and Proposition \ref{prop:continuous-Binfty-action}. Indeed, the composition map $M(E)\otimes_{\mathbb{F}} N(E)\to \End(\mathbb{G}_a)=R[\tau]$ promotes, extending scalars on the left along $R[\tau]\to R\ls{\sigma}$, to
\[
M(E)\ls{\sigma}\otimes_{\mathbb{F}}N(E)\longrightarrow R\ls{\sigma}.
\]
The same formula \eqref{eq:composition} allows to define a map 
\begin{equation}\label{eq:construction2}
M(E)\otimes_A N(E)\longrightarrow \Hom_{\bF}(\KI,R\ls{\sigma})
\end{equation}
assigning, to a couple $(m,n)$, the map $f\in \KI \mapsto (m\circ \Phi(f))\circ n\in R\ls{\sigma}$ where $\Phi$ now denotes the extended action of Proposition \ref{prop:continuous-Binfty-action}. 

Respectively, if $E$ is coabelian, scalars are extended on the right and we rather consider the composition $m\circ (\Phi(f)\circ n)$.

\begin{Proposition}\label{prop:continuous-pairing}
The map \eqref{eq:construction2} lands in $\Hom_{\bF}^{\mathrm{cont}}(\KI,R\ls{\sigma})$; \emph{i.e.} the submodule of continuous homomorphisms with respect to the $\infty$-adic topology on $K_{\infty}$ and the $\sigma$-adic topology on $R\ls{\sigma}$. 
\end{Proposition}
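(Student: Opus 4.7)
The plan is to factor the evaluation $f\mapsto m\circ\Phi(f)\circ n$ through the topological module $M\ls{\sigma}$, and then combine two continuity statements — one for the scalar action on $M\ls{\sigma}$ and one for the right-composition with $n$. For fixed $(m,n)\in M(E)\times N(E)$, I view $\varphi_{m,n}:K_\infty\to R\ls{\sigma}$, $f\mapsto m\circ\Phi(f)\circ n$, as the composition
\[
K_\infty \xrightarrow{\ \alpha_m\ } M\ls{\sigma} \xrightarrow{\ \beta_n\ } R\ls{\sigma},
\]
where $\alpha_m(f):=f\cdot m$ uses the $K_\infty$-action on $M\ls{\sigma}$ obtained by restricting the $\mathcal{B}_\infty(R)$-action supplied by Proposition~\ref{prop:continuous-Binfty-action}, and $\beta_n$ is the $R\ls{\sigma}$-linear extension of the composition map $M(E)\to R[\tau]$, $m'\mapsto m'\circ n$.

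Continuity of $\alpha_m$ is immediate from Proposition~\ref{prop:continuous-Binfty-action}, since the canonical inclusion $K_\infty \hookrightarrow \mathcal{B}_\infty(R)$ is a topological embedding for the $\infty$-adic topologies. For $\beta_n$, I first justify well-definedness: the assignment $m'\mapsto m'\circ n$ is left-$R[\tau]$-linear, because $(\tau\cdot m')\circ n = \tau\circ(m'\circ n)$ in $\End(\Ga)$; it therefore extends uniquely to an $R\ls{\sigma}$-linear map on $M\ls{\sigma}=R\ls{\sigma}\otimes_{R[\tau]}M(E)$. For continuity, I pick a finite $R[\tau]$-generating family $m_1,\ldots,m_r\in M(E)$ (which exists by Lemma~\ref{lem:finite-over-R[tau]}) and let $\Lambda$ be the $R\ps{\sigma}$-lattice in $M\ls{\sigma}$ they generate. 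Each $m_i\circ n\in R[\tau]$ has bounded $\tau$-degree, say at most $d$; a direct rewriting via the relation $\sigma b = b^{1/q}\sigma$ (as in Lemma~\ref{lem:rewriting}) shows $R\ps{\sigma}\cdot(m_i\circ n)\subseteq \sigma^{-d}R\ps{\sigma}$. Hence $\beta_n(\Lambda)\subseteq \sigma^{-d}R\ps{\sigma}$, and by $R\ls{\sigma}$-linearity $\beta_n(\sigma^N\Lambda)\subseteq \sigma^{N-d}R\ps{\sigma}$ for all $N\ge 0$, which is the desired continuity.

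Combining, $\varphi_{m,n}=\beta_n\circ\alpha_m$ is continuous. The coabelian case proceeds symmetrically: one writes $\varphi_{m,n}(f)=m\circ(n\cdot f)$ using the continuous right-action of $\mathcal{B}_\infty(R)$ on $N\ls{\sigma}$ (the comotive analogue of Proposition~\ref{prop:continuous-Binfty-action}, to which the authors appeal tacitly at the start of Section~\ref{sec:residue-in-tau}), and the left-composition-with-$m$ map is $R\ls{\sigma}$-linear and continuous by the same lattice argument. The only mildly delicate step is verifying that $\beta_n$ genuinely preserves the $\sigma$-adic topology; once one recognises that it is an $R\ls{\sigma}$-linear map sending a lattice into a bounded subset of $R\ls{\sigma}$, continuity is automatic and everything else is bookkeeping.
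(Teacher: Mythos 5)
Your proof is correct and relies on the same two ingredients as the paper's: the continuous $K_\infty$-action on $M\ls{\sigma}$ from Proposition~\ref{prop:continuous-Binfty-action}, and the fact that right-composition with $n$ shifts the $\sigma$-degree by a bounded amount (the paper's constant $v$ is your $d$). The paper rolls these two steps into a single chain of lattice inclusions with explicit constants $u,v,C,\delta_D$, whereas you factor $\varphi_{m,n}=\beta_n\circ\alpha_m$ and verify continuity of each factor separately; the organization is cleaner but the content is the same (one small nit: the finite generation of $M(E)$ over $R[\tau]$ is really Proposition~3.5 rather than Lemma~\ref{lem:finite-over-R[tau]}, though the paper itself blurs this).
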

\begin{proof}
This is a consequence of the continuous action of $K_{\infty}$ on $M$. In details, let $m\in M$, $n\in N$ and let $h:K_{\infty}\to R\ls{\sigma}$ be the $\mathbb{F}$-linear homomorphism associated to $m\otimes n$ via \eqref{eq:construction2}.  We shall show that, for all $D\geq 0$ there exists $\delta_D$ such that if $g\in K_{\infty}$ with $v_{\infty}(g)\geq \delta_D$ then $h(g)\in \sigma^{D}R[\![\sigma]\!]$. 

Let $\kappa=(\kappa_1,\ldots,\kappa_d)$ be generators of $M$ over $R[\tau]$ and let $u$ be the maximal degree of the polynomials in $\tau$ appearing as coefficients in an expression of $m$ written in $\kappa$. Let $\Lambda_{\kappa}$ be the $R[\![\sigma]\!]$-lattice of $M(\!(\sigma)\!)$ generated by $\kappa$ (so that $m\in \sigma^{-u}\Lambda_\kappa$). Let also $v:=\max_i\{\deg_{\tau}(\kappa_i\circ n)\}$ so that any element of $\Lambda_\kappa$ composed with $n$ belongs to $\sigma^{-v}R[\![\sigma]\!]$.

By Proposition \ref{prop:continuous-Binfty-action}, let $\delta_D$ be such that $g\Lambda_\kappa\subset \sigma^C \Lambda_\kappa$ for all $g\in K_{\infty}$ with $v_{\infty}(g)\geq \delta_D$, where $C=u+v+D$. Then,
\[
m\circ g \circ n \in \sigma^{-u}\Lambda_\kappa\circ g\circ n \subseteq \sigma^{C-u}\Lambda_\kappa\circ n \subseteq \sigma^{C-(u+v)}R\sps{\sigma}=\sigma^{D}R\sps{\sigma}.
\]
\end{proof}

\paragraph{Step 3.} Still, the morphism \eqref{eq:construction2} is not $R$-linear. To address this, for some integer $\ell$, we compose with the continuous map $\mathrm{coeff}_{-\ell}:R\ls{\sigma}\to R$ of Definition \ref{def:coeffn} which by Remark \ref{rem:convention-coeff} amounts to
\begin{equation}\label{eq:construction3}
M(E)\otimes_A N(E)\longrightarrow \Hom_{\bF}^{\mathrm{cont}}(\KI,R), \quad m\otimes n\longmapsto (g\mapsto \mathrm{coeff}_{0}(\tau^{\ell}\circ m\circ \Phi(g)\circ n)).
\end{equation}
The resulting map \eqref{eq:construction3} is now $R$-sesquilinear with respect to the $q^\ell$-power map on $R$, hence factors through $\tau^{\ell*}M(E)\otimes_{A\otimes R} N(E)$. While it is tempting to take $\ell=0$ to have $R$-linearity, the choice $\ell=1$ has the quite pleasant feature that any element in the image of \eqref{eq:construction3} vanishes on $A\subset \KI$. That is, for $\ell=1$, \eqref{eq:construction3} becomes:
\begin{equation}\label{eq:construction4}
\tRestau: \tau^*M(E)\otimes_{A\otimes R} N(E)\longrightarrow \Hom_{\bF}^{\mathrm{cont}}(\KI/A,R)
\end{equation}
(the quotient $\KI/A$ is endowed with the quotient topology).

\paragraph{Step 4.} The final step relies on residue-duality (\emph{e.g.} \cite[Theorem 8]{poonen}), which asserts that the pairing
\[
\ka\times (\KI/A)\longrightarrow \mathbb{F}, \quad \omega\otimes (g+A)\longmapsto \mathrm{Tr}_{\mathbb{F}_{\infty}|\mathbb{F}}(\mathrm{Res}_{\infty}(g\omega)).
\]
identifies $\ka$ as the continuous dual of $\KI/A$. Extending scalars along $\mathbb{F}\to R$, we get an $A\otimes R$-linear isomorphism
\begin{equation}\label{eq:residue-duality}
\ka\otimes_{\mathbb{F}} R \cong \Hom_{\mathbb{F}}^{\mathrm{cont}}(\KI/A,R). 
\end{equation}
The combination of \eqref{eq:construction4} and \eqref{eq:residue-duality} yields the construction of \eqref{eq:Res-tau}.
\begin{Definition}
If $E$ is abelian, we define \emph{the residue-in-$\tau$ pairing of $E$} to be the unique $A\otimes R$-linear map 
\[
\mathrm{Res}_{\tau}:\tau^*M(E)\otimes_{A\otimes R}N(E)\longrightarrow \ka\otimes_{\mathbb{F}} R
\]
assigning to an elementary tensor $m\otimes n$ the unique differential form $\omega$ which satisfies, for all $g\in K_{\infty}$, 
\[\mathrm{Tr}_{\mathbb{F}_{\infty}|\mathbb{F}}(\mathrm{Res}_{\infty}(g\omega))=\mathrm{coeff}_{0}(\tau\circ (m\circ \Phi(g))\circ n).\]  

If $E$ is coabelian, we define this map via the same formula, but with a change of parenthesis:
\[ \mathrm{Tr}_{\mathbb{F}_{\infty}|\mathbb{F}}(\mathrm{Res}_{\infty}(g\omega))=\mathrm{coeff}_{0}(\tau\circ m\circ (\Phi(g)\circ n)).\]  
\end{Definition}

In the following, we will omit the inner most parenthesis to unify presentation for both case of $E$ being abelian or $E$ being coabelian.
Indeed, we will see in Corollary \ref{cor:abelian=coabelian}, that our pairing enables us to show that the properties of being abelian and being coabelian are equivalent.

\begin{Remark}\label{rem:explicit-residue-duality}
If $C$ denotes the curve $\mathbb{P}^1$ over $\mathbb{F}$ and $\infty$ the point $[0:1]$, then $A$ identifies with $\bF[t]$ for some function $t$ on $\mathbb{P}^1$ having a simple pole at $\infty$. The field $K_{\infty}$ identifies with $\mathbb{F}\ls{\varpi}$ where we set $\varpi:=1/t$. \\
In this context, the isomorphism $\Hom_{\bF}^{\mathrm{cont}}(\KI/A,R)\to \Omega_{A/\bF}^1\otimes R$ is explicitly given by sending a continuous $\mathbb{F}$-linear map $f:K_{\infty}/A\to R$ to the differential
\[
-\sum_{k=0}^\infty{f\left(\varpi^{k+1}\right)t^k dt}.
\]
The continuity of $f$ ensures that this sum is finite.

The residue pairing is therefore
\[ \Restau(m\otimes n) = -\sum_{k=0}^\infty \mathrm{coeff}_{0}\left( \tau\circ m\circ \Phi(\varpi^{k+1})\circ n\right) t^k dt.\]
\end{Remark}

An immediate corollary of the construction is the following property (which is less obvious to see in \cite{hartl-juschka}).
\begin{Proposition}[Commutation with $\tau$]\label{prop:commutation-with-tau}
We have $\Restau((\tau\circ m)\otimes n)=\tau(\Restau(m\otimes (n\circ \tau)))$ in $\ka\otimes R$, for all couples $(m,n)$ in $M(E)\times N(E)$.
\end{Proposition}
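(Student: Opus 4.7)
The plan is to use the residue duality isomorphism \eqref{eq:residue-duality} in order to reduce the equality in $\ka\otimes R$ to a pointwise identity. Since $\omega \mapsto \bigl(g\mapsto \mathrm{Tr}_{\FI|\bF}(\mathrm{Res}_\infty(g\omega))\bigr)$ is an $A\otimes R$-linear isomorphism $\ka\otimes R \xrightarrow{\sim} \Hom_{\bF}^{\cont}(\KI/A,R)$, it will suffice to show that the two sides of the proposition induce the same continuous $R$-valued functional on $\KI/A$; equivalently, that they coincide after pairing with an arbitrary $g\in\KI$.

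For the left-hand side, the definition of $\Restau$ directly gives $\mathrm{Tr}_{\FI|\bF}(\mathrm{Res}_\infty(g\cdot\Restau((\tau\circ m)\otimes n)))=\coeff_0(\tau\circ(\tau\circ m)\circ\Phi(g)\circ n)=\coeff_0(\tau^{2}\circ m\circ\Phi(g)\circ n)$. For the right-hand side, I would first observe that the $\tau$-action on $\ka\otimes R$ is trivial on $\ka$ and equals the $q$-th power on $R$; in particular it commutes with $\mathrm{Res}_\infty$, $\mathrm{Tr}_{\FI|\bF}$ and scalar multiplication by $g\in\KI$, as these operations only touch the $\ka$-factor. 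This yields
$$\mathrm{Tr}_{\FI|\bF}(\mathrm{Res}_\infty(g\cdot\tau\omega))=\tau\bigl(\mathrm{Tr}_{\FI|\bF}(\mathrm{Res}_\infty(g\omega))\bigr)$$
for any $\omega\in\ka\otimes R$ and $g\in\KI$; applied to $\omega=\Restau(m\otimes(n\circ\tau))$, it rewrites the right-hand side of the proposition as $\tau\bigl(\coeff_0(\tau\circ m\circ\Phi(g)\circ n\circ\tau)\bigr)$.

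The claim then boils down to a purely formal identity in $R\ls{\sigma}$, namely $\coeff_0(\tau^{2}h) = \tau\bigl(\coeff_0(\tau\,h\,\tau)\bigr)$ for all $h\in R\ls{\sigma}$ (applied with $h=m\circ\Phi(g)\circ n$). I would verify this directly: expanding $h=\sum_i\tau^i a_i$ in the right-handed form of Lemma \ref{lem:coefficient}, left multiplication by $\tau^2$ only shifts indices so that the left-hand side is $a_{-2}$, while the commutation rule $a\tau=\tau a^{1/q}$ gives $h\tau=\sum_i\tau^{i+1}a_i^{1/q}$, hence $\tau h\tau=\sum_i\tau^{i+2}a_i^{1/q}$, so $\coeff_0(\tau h\tau)=a_{-2}^{1/q}$; applying $\tau$ (the $q$-th power on $R$) recovers $a_{-2}$. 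The hard part here is not any single computation, but the conceptual task of disentangling the several incarnations of $\tau$—as an element of $R[\tau]$ composed with $m$ or $n$, as the Frobenius endomorphism of $A\otimes R$, and as its induced action on $\ka\otimes R$—and of using the right-handed coefficient convention of Lemma \ref{lem:coefficient} consistently throughout.
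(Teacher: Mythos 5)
Your proposal is correct and follows essentially the same route as the paper: it invokes compatibility of residue duality with $\tau$ to reduce to $\tRestau$, and then verifies the resulting identity by computing the coefficient of $\tau^0$ after left- and right-multiplication by $\tau$ in $R\ls{\sigma}$, using the relation $a\tau=\tau a^{1/q}$. Your formulation as the free-standing identity $\coeff_0(\tau^{2}h)=\tau\bigl(\coeff_0(\tau h\tau)\bigr)$ is a mild repackaging of the paper's expansion of $\tau\circ m\circ\Phi(f)\circ n$ as $\sum_i\sigma^i c_{-i}$, with $c_{-1}$ as the crucial coefficient, but the mathematical content is identical.
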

\begin{proof}
As the residue duality is compatible with the $\tau$-action, we can equivalently show the formula for $\tRestau$.
 
For $f\in K_{\infty}$, we write $\tau\circ m\circ \Phi(f)\circ n\in R\ls{\sigma}$ as $\sum_{i}{\sigma^ic_{-i}}$. Note that $c_0=\tRestau(m\otimes n)(f)$. Then $\tau\circ (\tau\circ m)\circ \Phi(f)\circ n$ corresponds to $\sum_{i}{\tau\sigma^ic_{-i}}$ whose zeroth coefficient is $c_{-1}$; on the other hand $\tau\circ m\circ \Phi(f)\circ n\circ \tau$ rather corresponds to $\sum_{i}{\sigma^ic_{-i}\tau}$ whose zeroth coefficient is $(c_{-1})^{\!1/q}$.
\end{proof}

\subsection{The pairing $\mathrm{Res}_{\tau}$ is perfect}
%Let $E$ be an Anderson $A$-module over a perfect $A$-algebra $R$. Let $M=M(E)$ be the motive of $E$ and $N=N(E)$ its comotive. 

Our main result of this subsection is the following.
\begin{Theorem}\label{thm:res-perfect}
Let $R$ be a perfect $A$-algebra. Assume that $E$ is abelian (respectively coabelian). Then, the pairing $\mathrm{Res}_{\tau}$ is perfect; i.e.~both induced maps:
\begin{align*}
\Restau(-\otimes \star) &: N\longrightarrow \Hom_{A\otimes R}\left( \tau^*M,\Omega_{A/\bF}^1\otimes R\right), \quad n\longmapsto \Restau(-\otimes n),\quad \text{and} 
\\
\Restau(\star \otimes -) &: \tau^*M\longrightarrow \Hom_{A\otimes R}\left( N,\Omega_{A/\bF}^1\otimes R\right), \quad m\longmapsto \Restau(m\otimes -),
\end{align*}
are isomorphisms of $A\otimes R$-modules. In addition, these isomorphisms are compatible with $\tau$ in the sense that the following two diagrams of $A\otimes R$-modules commute:
\[\begin{tikzcd}[column sep=4.1em]
	N & {\Hom_{A\otimes R}(\tau^*M,\ka\otimes R)} & {\tau^*M} & {\Hom_{A\otimes R}(N,\ka\otimes R)} \\
	{\tau^*N} & {\tau^*\Hom_{A\otimes R}(\tau^{*}M,\ka\otimes R)} & M & {\sigma^*\Hom_{A\otimes R}(N,\ka\otimes R)}
	\arrow["{\Restau(-\otimes\star)}", from=1-1, to=1-2]
	\arrow["\tau"', from=1-1, to=2-1]
	\arrow["{\eta\mapsto \tau^{-1}\circ \eta\circ \tau}", from=1-2, to=2-2]
	\arrow["{\Restau(\star\otimes -)}", from=1-3, to=1-4]
	\arrow["\tau"', from=1-3, to=2-3]
	\arrow["{\eta\mapsto \tau \circ \eta\circ \tau}", from=1-4, to=2-4]
	\arrow["{\tau^*\!\Restau(-\otimes\star)}", from=2-1, to=2-2]
	\arrow["{\sigma^*\!\Restau(\star\otimes -)}", from=2-3, to=2-4]
\end{tikzcd}\]
\end{Theorem}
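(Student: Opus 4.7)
My plan is to separate the statement into its $\tau$-compatibility (the commutativity of the two diagrams) and the perfectness of the pairing itself. The $\tau$-compatibility I would treat first, as it is essentially formal: the residue-duality isomorphism \eqref{eq:residue-duality} is $\tau$-equivariant by construction, and unwinding the definitions in either diagram reduces the commutativity to the commutation identity $\Restau(\tau m\otimes n)=\tau\bigl(\Restau(m\otimes n\circ\tau)\bigr)$ of Proposition \ref{prop:commutation-with-tau}. The two diagrams differ only in which of the two arguments one fixes and which one lets vary, and in both cases the relevant ``$\tau^{-1}\circ\eta\circ\tau$''-map is the canonical one induced by the $\tau$-structure on the Hom.

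For perfectness, my strategy is to reduce to a trivialised situation via faithfully flat descent and then compute. Since the pairing is compatible with base change and since the formation of $N$ commutes with flat base change among perfect rings (Corollary \ref{cor:N-commutes-with-flat-perfect-base-change}), the property of the induced maps being isomorphisms of $A\otimes R$-modules descends along faithfully flat maps between perfect $A$-algebras. Combining Lemma \ref{lem:perf-respects-faithfulllyflat} with Definition \ref{def:anderson-modules}, I may assume $E=\Ga^d_S$ for some perfect $A$-algebra $S$; then $M\cong S[\tau]^d$ and $N\cong S[\tau]^d$ as $S[\tau]$-modules, with the $A$-action recorded by a matrix representation $\Phi:A\to \Mat_d(S[\tau])$, and the explicit formula of Remark \ref{rem:explicit-residue-duality} turns $\Restau$ into a concrete matrix-and-coefficient expression in $S\ls{\sigma}$.

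The heart of the proof will lie in verifying perfectness in this trivialised case. Here I would lean on the topological identification $M\ls{\sigma}\cong \mathcal{B}_\infty(M)$ from Section \ref{sec:homeomorphism}: under this homeomorphism, the coefficient-extraction $\coeff_0$ can be matched with a classical $\infty$-adic residue, and $\Restau$ becomes, through \eqref{eq:residue-duality}, dual to the natural $K_\infty$-action from Proposition \ref{prop:continuous-Binfty-action}. The nilpotency hypothesis of Definition \ref{def:anderson-modules}(\ref{item:Lie-compatible}) then supplies the non-degeneracy needed to conclude. An equivalent path would be to extend Hartl--Juschka's explicit $\Xi$-map from \cite{hartl-juschka} to all perfect $A$-algebras using the topological framework of Section \ref{sec:noncommut}, and to check directly that $\Restau(-\otimes\star)$ is a two-sided inverse of $\Xi$. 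The main obstacle I foresee is the analytic bookkeeping involved in rigorously translating $\coeff_0$ into the $\infty$-adic residue through the homeomorphism of Section \ref{sec:homeomorphism} while preserving the $A\otimes R$-module structures; once this is in place, perfectness drops out. As a pleasant byproduct, the target being finite projective automatically forces $N$ (respectively $\tau^*M$) to be finite projective in the abelian (respectively coabelian) case, yielding the Corollary ``abelian equals coabelian'' of the introduction in one go.
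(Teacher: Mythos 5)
Your outline correctly identifies the preliminary reductions — proving $\tau$-compatibility from Proposition \ref{prop:commutation-with-tau} and passing to a perfect faithfully flat extension $S$ where $E$ splits as $\Ga^d_S$ — and you correctly flag that the secondary route through an explicit Hartl–Juschka-style $\Xi$-map is available. That secondary route is, in fact, essentially what the paper does: it defines $\Xi_\kappa$ by the formula \eqref{eq:expression-of-xi(eta)}, verifies well-definedness via Lemma \ref{lem:etainfty-continuous}, and then proves in Proposition \ref{prop:properties-of-Xi} that $\Xi_\kappa\circ\tRestau(-\otimes\star)=\id$ and that $\Xi_\kappa$ is injective; bijectivity of $\tRestau(-\otimes\star)$ follows, and then finite projectivity of $N$ drops out from the abelianness of $M$, from which bijectivity of $\Restau(\star\otimes-)$ is deduced by an evaluation-pairing argument.

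However, the path you put forward as your \emph{primary} strategy has a genuine gap. You propose to use the homeomorphism $M\ls{\sigma}\cong\mathcal B_\infty(M)$ of Section \ref{sec:homeomorphism} to ``match $\coeff_0$ with a classical $\infty$-adic residue'' and then to extract non-degeneracy from the nilpotency hypothesis of Definition \ref{def:anderson-modules}\ref{item:Lie-compatible}. Neither of these is a complete argument. The homeomorphism lets you view the same module under two topologies, but it does not by itself furnish a perfect pairing; the paper does not in fact invoke that theorem in the proof of Theorem \ref{thm:res-perfect} (it uses only the subsidiary Propositions \ref{prop:continuous-Binfty-action} and \ref{prop:existence-of-sigmastablelattice}). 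More importantly, the attribution of non-degeneracy to the nilpotency condition is misplaced: what actually forces non-degeneracy is the finite projectivity of $M$ over $A\otimes R$ (the abelianness hypothesis), which enters through the choice of a finite $A\otimes R$-generating set of $M$ in Lemma \ref{lem:etainfty-continuous} and in the injectivity argument of Proposition \ref{prop:properties-of-Xi}\ref{item:Xi-injective}. The nilpotency hypothesis feeds into the topological preliminaries (Lemma \ref{lem:abelian-implies-sigma-lattice}), but it is not the crux of perfectness. Since you explicitly flag the ``analytic bookkeeping'' as an unresolved obstacle, the proof of perfectness — the heart of the theorem — is not actually carried out in your proposal; you would need to commit to the $\Xi_\kappa$ route and prove the two properties of Proposition \ref{prop:properties-of-Xi}, or supply a genuinely different mechanism for non-degeneracy.
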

\begin{Remark}
Since $R$ is perfect, the homomorphism $\tau:\tau^*\ka\otimes R\to \ka\otimes R$ is an isomorphism, and hence, $\tau^{-1}\circ \eta\circ \tau$ is a well-defined homomorphism $\tau^{2*}M\to \tau^*\ka\otimes R$.
The right vertical map in the left diagram is then well-defined, since pullbacks commute with homomorphisms; \emph{i.e.} $\rho^*\Hom_{A\otimes R}(U,V)=\Hom_{A\otimes R}(\rho^*U,\rho^*V)$ for any ring endomorphism $\rho$ of $A\otimes R$ and couple of $A\otimes R$-modules $(U,V)$.

We remark, that this definition of the $\tau$-action on $\Hom_{A\otimes R}(\tau^*M,\ka\otimes R)$ is the natural way of defining a right $\tau$-action on the homomorphisms for a left $\tau$-module $\tau^*M$.

Similarly, the right vertical map in the right diagram is well-defined, and is the natural way of defining a left $\tau$-action on the homomorphisms for a right $\tau$-module $N$.

That these diagrams commute is a simple reformulation of Proposition \ref{prop:commutation-with-tau}.
\end{Remark}

We will actually prove the theorem for $\tRestau$ from which the one for $\Restau$ directly follows using the isomorphism from residue duality \eqref{eq:residue-duality}.

Assuming $E$ to be abelian, i.e.~$M$ to be finite projective of constant rank as $A\otimes R$-module, we will construct an inverse map 
\begin{equation}\label{eq:HartlJuschkaXi}
\Xi_{\kappa}:\Hom_{A\otimes R}\left(\tau^* M, \Hom^{\mathrm{cont}}_{\mathbb{F}}(\KI/A,R)\right)\longrightarrow N,
\end{equation}
to $\tRestau(-\otimes \star)$. The map $\Xi_\kappa$ is inspired by the work of Hartl--Juschka. Concurrently, this proves the compatibility of $\Restau$ with Hartl--Juschka's construction.

\paragraph{Local inverse and Hartl--Juschka's construction.}
The definition of $\Xi_{\kappa}$ builds upon the following lemma:
\begin{Lemma}\label{lem:extension-to-eta-infty}
Given a map $\eta:\tau^*M\to \Hom^{\mathrm{cont}}_{\mathbb{F}}(\KI/A,R)$ of $A\otimes R$-modules, there exists a unique map $\eta_{\infty}$ of $\mathcal{B}_\infty(R)$-modules making the following diagram commute:
\[
\begin{tikzcd}
\tau^* M \arrow[r,"\eta"]\arrow[d] & \Hom^{\mathrm{cont}}_{\mathbb{F}}(\KI/A,R) \arrow[d] \\
\tau^* \mathcal{B}_{\infty}(M) \arrow[r,"\eta_{\infty}"] & \Hom^{\mathrm{cont}}_{\mathbb{F}}(\KI,R)
\end{tikzcd}
\]
where the vertical maps are the canonical ones.
\end{Lemma}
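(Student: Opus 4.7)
The strategy is to break the problem into two independent steps: first enlarge the codomain from $\Hom^{\mathrm{cont}}_{\mathbb{F}}(\KI/A,R)$ to $\Hom^{\mathrm{cont}}_{\mathbb{F}}(\KI,R)$, and then enlarge the domain from $\tau^*M$ to $\tau^*\mathcal{B}_{\infty}(M)$ via the universal property of scalar extension.

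First, I would compose $\eta$ with the canonical map $\pi^{*}:\Hom^{\mathrm{cont}}_{\mathbb{F}}(\KI/A,R)\to \Hom^{\mathrm{cont}}_{\mathbb{F}}(\KI,R)$ obtained by precomposition with the projection $\pi:\KI\to \KI/A$. Since $\pi$ is continuous and $A$-linear, and since $R$ acts only on the target $R$, the map $\pi^{*}$ is $A\otimes R$-linear, so $\tilde{\eta}:=\pi^{*}\circ \eta$ is an $A\otimes R$-linear map $\tau^*M\to \Hom^{\mathrm{cont}}_{\mathbb{F}}(\KI,R)$.

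Next, I would endow $\Hom^{\mathrm{cont}}_{\mathbb{F}}(\KI,R)$ with a natural $\mathcal{B}_{\infty}(R)$-module structure extending the obvious $\KI\otimes R$-action, given by $((f\otimes r)\cdot\varphi)(x):=r\cdot \varphi(fx)$. Concretely, writing $b\in\mathcal{B}_\infty(R)$ as a convergent series in terms of a uniformizer at $\infty$, say $b=\sum_j f_j\otimes r_j$ with $f_j\to 0$ in $\KI$, continuity of $\varphi$ ensures that $\varphi(f_j x)$ is eventually $0$ in $R$ for every fixed $x\in \KI$, so $(b\cdot \varphi)(x):=\sum_j r_j \varphi(f_j x)$ defines an element of $\Hom^{\mathrm{cont}}_{\mathbb{F}}(\KI,R)$ (this is essentially the ``dual'' counterpart of the continuous $\mathcal{B}_\infty(R)$-action provided by Proposition~\ref{prop:continuous-Binfty-action}). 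With this structure in hand, the universal property of the tensor product produces a unique $\mathcal{B}_{\infty}(R)$-linear extension of $\tilde{\eta}$ to $\mathcal{B}_{\infty}(R)\otimes_{A\otimes R}\tau^*M\cong \tau^{*}\mathcal{B}_{\infty}(M)$, which is the sought $\eta_{\infty}$. The commutativity of the diagram is then immediate from the construction, and uniqueness follows from $\mathcal{B}_{\infty}(R)$-linearity together with the fact that the image of $\tau^*M$ generates $\tau^*\mathcal{B}_{\infty}(M)$ as a $\mathcal{B}_{\infty}(R)$-module.

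The main obstacle I anticipate is the clean verification that the $\mathcal{B}_{\infty}(R)$-action on $\Hom^{\mathrm{cont}}_{\mathbb{F}}(\KI,R)$ is well-defined: one has to check independence of the series representation and continuity of the resulting map, which is morally dual to the continuity statements established in Section~\ref{sec:sigma-top}. Once this module structure is in place, the extension and uniqueness are formal consequences of the universal property.
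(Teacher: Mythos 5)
Your proposal is correct and follows essentially the same route as the paper: one equips $\Hom^{\mathrm{cont}}_{\mathbb{F}}(\KI,R)$ with its natural $\mathcal{B}_{\infty}(R)$-module structure (using continuity to make the action of the completed ring $\mathcal{A}_{\infty}(R)$, and then of $\mathcal{B}_\infty(R)=\KI\otimes_{\mathcal{O}_\infty}\mathcal{A}_\infty(R)$, well-defined), and then invokes the universal property of scalar extension $\tau^*\mathcal{B}_{\infty}(M)\cong \mathcal{B}_{\infty}(R)\otimes_{A\otimes R}\tau^*M$. The only point the paper is slightly more explicit about is the intermediate step that the $\mathcal{O}_\infty\otimes R$-action on a continuous $h$ factors through $\mathcal{A}_\infty(R)$ because $(\mathfrak{m}_\infty^n\otimes R)\cdot h=0$ for $n\gg 0$, which is what your ``continuity ensures eventual vanishing'' observation encodes.
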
 
\begin{proof}
Uniqueness and existence both follow from the universality of the tensor product and the fact that $\Hom^{\mathrm{cont}}_{\mathbb{F}}(\KI,R)$ is canonically a $\mathcal{B}_{\infty}(R)$-module: $\Hom^{\mathrm{cont}}_{\mathbb{F}}(\KI,R)$ is indeed an $\cO_{\infty}\otimes R$-module, where $x=\sum_i{b_i\otimes r_i}$ acts on $h:\KI\to R$ via
\[
x\cdot h:=\left(b\in \KI\longmapsto \sum_i{h(a_i)r_i}\right).
\]
Since $h$ is continuous, $(\mathfrak{m}_{\infty}^n\otimes R)\cdot h=0$ for all large enough integers $n$, and thus $\Hom^{\mathrm{cont}}_{\mathbb{F}}(\KI,R)$ is canonically an $\mathcal{A}_{\infty}(R)$-module. It is also a $\KI$-vector space and both actions coincide on $\mathcal{O}_{\infty}$; hence this action extends uniquely to $\mathcal{B}_{\infty}(R)$.
\end{proof}

\begin{Lemma}\label{lem:etainfty-continuous}
For all $m\in M$, there exists $s_0$ for which $\eta_{\infty}(\sigma^sm)(1)=0$ for all $s\geq s_0$.
\end{Lemma}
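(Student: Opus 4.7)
The plan is to transport the $\sigma$-adic smallness of $\sigma^s m$ into $\infty$-adic smallness using the homeomorphism of Section~\ref{sec:homeomorphism}, and then combine this with the $\mathcal{A}_{\infty}(R)$-linearity of $\eta_{\infty}$ provided by Lemma~\ref{lem:extension-to-eta-infty}.

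First, I would view $m\otimes 1\in \tau^*M$ as an element of $\tau^*\mathcal{B}_{\infty}(M)$, and pick an $\mathcal{A}_{\infty}(R)$-lattice $\Lambda$ containing it. Proposition~\ref{prop:existence-of-sigmastablelattice}, applied to $\tau^*M$ in place of $M$, furnishes an integer $s_0>0$ with $\sigma^{s_0}\Lambda\subset \mathfrak{m}_{\infty}\Lambda$; iterating gives $\sigma^{ks_0}\Lambda\subset \mathfrak{m}_{\infty}^k\Lambda$ for every $k\geq 0$. Thus for $s$ sufficiently large, $\sigma^s m$ lies arbitrarily deep in $\mathfrak{m}_{\infty}^k \Lambda$.

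Next I would examine $\eta_{\infty}(\Lambda)$, which is a finitely generated $\mathcal{A}_{\infty}(R)$-submodule of $\Hom_{\mathbb{F}}^{\mathrm{cont}}(\KI,R)$, say generated by continuous maps $h_1,\ldots,h_r$. Each $h_i$ vanishes on some $\mathfrak{m}_{\infty}^{N_i}$; I would set $N:=\max_i N_i$, observing that every element of $\eta_{\infty}(\Lambda)$ still annihilates $\mathfrak{m}_{\infty}^N$, because the $\mathcal{O}_{\infty}$-action preserves this annihilator. For $s\geq Ns_0$, writing $\sigma^s m=\sum_j y_j\lambda_j$ with $y_j\in \mathfrak{m}_{\infty}^N\mathcal{A}_{\infty}(R)$ and $\lambda_j\in \Lambda$, the $\mathcal{B}_{\infty}(R)$-linearity of $\eta_{\infty}$ together with the action formula recalled in Lemma~\ref{lem:extension-to-eta-infty} yields
\[
\eta_{\infty}(\sigma^s m)(1) \;=\; \sum_j (y_j\cdot \eta_{\infty}(\lambda_j))(1) \;=\; \sum_j \eta_{\infty}(\lambda_j)(y_j) \;=\; 0,
\]
the last equality because each $\eta_{\infty}(\lambda_j)$ annihilates $\mathfrak{m}_{\infty}^N$ and each $y_j$ belongs to $\mathfrak{m}_{\infty}^N\mathcal{A}_{\infty}(R)$.

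The main conceptual input is the interplay between the $\sigma$-adic and $\infty$-adic topologies, already encoded in Section~\ref{sec:homeomorphism} and Proposition~\ref{prop:existence-of-sigmastablelattice}; once this is in place, the rest is a short computation using the $\mathcal{A}_{\infty}(R)$-action on $\Hom_{\mathbb{F}}^{\mathrm{cont}}(\KI,R)$. The one subtlety worth checking is that the uniform vanishing of the generators $h_i$ on $\mathfrak{m}_{\infty}^N$ is inherited by their $\mathcal{A}_{\infty}(R)$-linear combinations; this follows at once from $(y\cdot h)(1)=h(y)$ and the stability $\mathcal{O}_{\infty}\cdot \mathfrak{m}_{\infty}^N\subseteq \mathfrak{m}_{\infty}^N$.
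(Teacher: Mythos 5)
Your proof is correct and follows essentially the same route as the paper's: both rely on Proposition~\ref{prop:existence-of-sigmastablelattice} to push $\sigma^s m$ deep into a power of $\mathfrak{m}_\infty$ times a lattice, and then invoke the $\mathcal{A}_\infty(R)$-action formula from Lemma~\ref{lem:extension-to-eta-infty} together with continuity of $\eta_\infty$ on a finite generating set to force the value at $1$ to vanish. The only cosmetic differences are that you take a uniform $N=\max_i N_i$ and phrase things with a single lattice $\Lambda$ (and write the slightly abusive shorthand $\eta_\infty(\lambda_j)(y_j)$ for $\sum_i \eta_\infty(\lambda_j)(b_{ji}) r_{ji}$, omitting the harmless Frobenius twist on the $R$-factor), where the paper keeps the individual $N_i$ attached to each generator $f_i$ and spells out the decomposition $\alpha_i=\sum_j b_{ij}\otimes r_{ij}$.
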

\begin{proof}
Pick $(f_1,\ldots,f_\ell)$ a family of generators of $M$ as an $A\otimes R$-module. Because $\eta_{\infty}(f_i)$ is continuous, there exists an integer $N_i$ for which $\eta_{\infty}(f_i)(\mathfrak{m}_{\infty}^n)=0$ for all $n\geq N_i$. By Proposition~\ref{prop:existence-of-sigmastablelattice}, for all large enough integer $s$ we have
\[
\sigma^sm\in \mathfrak{m}_{\infty}^{N_1}\mathcal{A}_{\infty}(R)f_1+\ldots+\mathfrak{m}_{\infty}^{N_\ell}\mathcal{A}_{\infty}(R)f_\ell.
\]
For such an $s$ there exist $\alpha_i\in \mathcal{A}_{\infty}(R)$ such that $\sigma^sm=\pi_{\infty}^{N_1}\alpha_1 f_1+\ldots+\pi_{\infty}^{N_\ell}\alpha_\ell f_\ell$, and hence
\begin{align}\label{eq:etainfty}
\eta_{\infty}(\sigma^sm)(1) &= \eta_{\infty}(\pi_{\infty}^{N_1}\alpha_1 f_1+\ldots+\pi_{\infty}^{N_\ell}\alpha_\ell f_\ell)(1) \nonumber \\
&= \sum_{i=1}^\ell{\eta_{\infty}(\pi_{\infty}^{N_i}\alpha_i f_i)(1)} \nonumber \\
&= \sum_{i=1}^\ell{\eta_{\infty}(\alpha_i f_i)(\pi_{\infty}^{N_i})}
\end{align}
Now if $\alpha_i\in \mathcal{A}_{\infty}(R)$ decomposes as $\sum_{j}{b_{ij}\otimes r_{ij}}$ with $b_{ij}\in \mathcal{O}_{\infty}$ and $r_{ij}\in R$, then
\[
\eta_{\infty}(\alpha_i f_i)(\cdot)=\sum_{j}{\eta_{\infty}(f_i)(b_{ij} \cdot )r_{ij}^{q}}
\]
and hence \eqref{eq:etainfty} is zero.
\end{proof}

To find the inverse map, we reduce to the situation where $G$ is split. For this, we let $R\to S$ be a faithfully flat morphism of $A$-algebra on which $E$ becomes isomorphic to $\mathbb{G}_a$. Without loss of generality, we may use Lemma \ref{lem:perf-respects-faithfulllyflat} to assume that $S$ is perfect. The pairing then commutes with this particular base change according to Corollary \ref{cor:N-commutes-with-flat-perfect-base-change} and, since faithful flatness preserves isomorphisms, it is enough to prove the perfectness of the pairing after base change to $S$. 

Let $\kappa:E\stackrel{\sim}{\to}\mathbb{G}_a^d$ be a choice of coordinates for $E_S$. For $i\in \{1,\ldots,d\}$, we set $\kappa_i$ (respectively~$\dk_i$) to be the composite maps
\[
\kappa_i:E\xrightarrow{\sim} \mathbb{G}_a^d\xrightarrow{\mathrm{proj}_i} \mathbb{G}_a \quad \left(\text{respectively}\quad \dk_i:\mathbb{G}_a \xrightarrow{\mathrm{inj}_i} \mathbb{G}_a^d \xrightarrow{\sim} E \right).
\]
Following Hartl--Juschka, to $\eta$ as in Lemma \ref{lem:extension-to-eta-infty}, we assign
\begin{equation}\label{eq:expression-of-xi(eta)}
\Xi_\kappa(\eta):=\sum_{i=1}^d{\dk_i\left\{\sum_{s=0}^{\infty}{\tau^s\eta_{\infty}(\tau^{-(s+1)}\kappa_i)(1)}\right\}} \in N
\end{equation}
(it is a formal computation to check that \eqref{eq:expression-of-xi(eta)} coincides with $\check{m}_{\eta}$ in \cite[Theorem 2.5.13]{hartl-juschka} under residue duality). That \eqref{eq:expression-of-xi(eta)} is well-defined amounts to showing that the embraced expression is polynomial in $\tau$; \emph{i.e.} that  $\eta_{\infty}(\tau^{-(s+1)}\kappa_i)(1)=0$ for $s\gg 0$. But this follows from Lemma \ref{lem:etainfty-continuous}.\\

Theorem \ref{thm:res-perfect} is an immediate consequence of the next proposition.
\begin{Proposition}\label{prop:properties-of-Xi}
The following holds:
\begin{enumerate}[label=$(\roman*)$]
\item\label{item:composition=identity} For all $n\in N$, $\Xi_{\kappa}(\tRestau(-\otimes n))=n$;
\item\label{item:Xi-injective} $\Xi_\kappa$ is injective.
\end{enumerate}
\end{Proposition}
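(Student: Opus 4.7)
Both parts are established after a reduction to the split setting via faithfully flat descent (Corollary \ref{cor:N-commutes-with-flat-perfect-base-change} and Lemma \ref{lem:perf-respects-faithfulllyflat}). Over a perfect faithfully flat $R$-algebra $S$, we fix a trivialization $E_S\cong \Ga^d$, yielding coordinates $\kappa_i$ and sections $\dk_i$; any $n\in N$ then admits a unique decomposition $n = \sum_{i=1}^d \dk_i\circ n_i$ with $n_i\in R[\tau]$.

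For part~$(i)$, the key computation is
\[
\tRestau(\sigma^{s+1}\kappa_i\otimes n)_\infty(1) \;=\; \coeff_0\bigl(\tau\circ\sigma^{s+1}\kappa_i\circ n\bigr) \;=\; \coeff_0(\sigma^s n_i) \;=\; \coeff_s(n_i),
\]
where the second equality uses $\tau\sigma = 1$ together with $\kappa_i\circ n = n_i$, and the third is Remark~\ref{rem:convention-coeff}. Substituting into the formula \eqref{eq:expression-of-xi(eta)} and recognizing $\sum_s \tau^s\coeff_s(n_i) = n_i$ gives $\Xi_\kappa(\tRestau(-\otimes n)) = \sum_i \dk_i\circ n_i = n$.

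For part~$(ii)$, suppose $\Xi_\kappa(\eta) = 0$. Injectivity of the coordinate embeddings $\dk_i:\Ga\to E$ forces $\sum_{s\geq 0}\tau^s\eta_\infty(\sigma^{s+1}\kappa_i)(1) = 0$ in $R[\tau]$ for each $i$, and reading this coefficient by coefficient yields $\eta_\infty(\sigma^{s+1}\kappa_i)(1) = 0$ for every $s\geq 0$ and every $i$. The $\mathcal{B}_\infty(R)$-linearity of $\eta_\infty$, combined with the identity $\eta_\infty(m)(g) = \eta_\infty(\Phi(g)m)(1)$ for $g\in\mathcal{B}_\infty(R)$ (which follows from the description of the $\mathcal{B}_\infty(R)$-action recalled in the proof of Lemma~\ref{lem:extension-to-eta-infty}), upgrades this to the following statement: $\eta_\infty$ is the zero element of $\Hom^{\mathrm{cont}}_\FF(\KI,R)$ on every $m$ in the $\mathcal{B}_\infty(R)$-submodule $V\subseteq \tau^*\mathcal{B}_\infty(M)$ generated by $\{\sigma^{s+1}\kappa_i : s\geq 0,\ 1\leq i\leq d\}$.

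The main obstacle is to show that $V = \tau^*\mathcal{B}_\infty(M)$, from which $\eta = 0$ on $\tau^*M$ follows via the commutativity of the diagram in Lemma~\ref{lem:extension-to-eta-infty} together with the injectivity $\tau^*M\hookrightarrow \tau^*\mathcal{B}_\infty(M)$ (a consequence of $M$ being finite projective over $A\otimes R$). For this, we use the Anderson condition~\ref{item:Lie-compatible}: for any $a\in A$ of positive degree, expanding $\Phi(a) \in \End_\FF(E)$ as a polynomial in $\tau$ and exploiting the relation $\tau\sigma = 1$ in the continuous $R\ls{\sigma}$-action on $\mathcal{B}_\infty(M)$ produces an explicit $\mathcal{B}_\infty(R)$-linear expression of each $\kappa_i$ in terms of the generators $\sigma\kappa_j$ of $V$ (e.g., for the Carlitz module $\kappa_1 = (t\otimes 1 - 1\otimes t^{1/q})\cdot\sigma\kappa_1$). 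Iterating the argument using $\Phi(a)$ itself, each $\tau^j\kappa_i$ lies in $V$, and since these elements generate $\mathcal{B}_\infty(M)$ as a $\mathcal{B}_\infty(R)$-module, we conclude.
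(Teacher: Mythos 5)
Part~$(i)$ is correct. You argue by a direct computation for arbitrary $n=\sum_i\dk_i\circ n_i$, whereas the paper first establishes $R[\tau]$-linearity of $n\mapsto\Xi_\kappa(\tRestau(-\otimes n))$ (using Proposition~\ref{prop:commutation-with-tau}) and then reduces to $n=\dk_j$; both routes are valid and yours is arguably more direct.

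Part~$(ii)$ has a genuine gap. From $\Xi_\kappa(\eta)=0$ you correctly extract $\eta_\infty(\sigma^{s+1}\kappa_i)(1)=0$ for all $i,s$, but the ``upgrade'' to $\eta_\infty$ vanishing (as a homomorphism) on the $\mathcal{B}_\infty(R)$-span $V$ does not follow. The $\mathcal{B}_\infty(R)$-action on $\Hom^{\mathrm{cont}}_\FF(\KI,R)$ recalled in Lemma~\ref{lem:extension-to-eta-infty} sends $h$ to $b\mapsto\sum_i h(a_i b)r_i$, so $(c\cdot h)(1)$ involves values of $h$ at many points, not just at $1$; vanishing of $h(1)$ is not preserved under this action. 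Trying instead to first show $\eta_\infty(\sigma^{s+1}\kappa_i)\equiv 0$ via $\eta_\infty(\sigma^{s+1}\kappa_i)(g)=\eta_\infty(g\cdot\sigma^{s+1}\kappa_i)(1)$ is circular: $g\cdot\sigma^{s+1}\kappa_i$ is a general element of $V$, and you would need precisely the statement you are trying to prove. The paper's argument crucially uses one more ingredient that your sketch never invokes: $\eta$ lands in $\Hom^{\mathrm{cont}}_\FF(\KI/A,R)$, i.e.\ it vanishes on $A$. Concretely, for $m\in\tau^*M$ and $a\in\KI$ one writes $am=\sum_i f_i(\sigma)\kappa_i$ in $M\ls{\sigma}$ and splits each $f_i$ into its strictly-negative-degree part $f_i^-\in\sigma S\ps{\sigma}$ and its polynomial part $f_i^+\in S[\tau]$; the $f_i^-$-contribution dies by your hypothesis together with continuity of $\eta_\infty$, and the $f_i^+$-contribution lies in $M$, so $\eta$ evaluated at $1\in A$ on it is zero. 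Without the second observation, the ``positive $\tau$-degree'' part is not controlled, and the claim $V=\tau^*\mathcal{B}_\infty(M)$ (whose sketch is also quite informal) would not by itself close the argument.
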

\begin{proof}
First, we claim that $\Xi_{\kappa}(\tRestau(-\otimes n))$ is $R[\tau]$-linear in $n$. This is the combination of two observations: that $\tRestau(-\otimes n\tau)=\sigma(\tRestau(\tau-\otimes n))$ by Proposition \ref{prop:commutation-with-tau}, and that $\Xi_\kappa (\sigma(\eta\tau))=\Xi_{\kappa}(\eta)\tau$ for any $\eta$. The latter follows from the computation:
\begin{align*}
\Xi_\kappa (\sigma(\eta\tau)) &=\sum_{i=1}^d{\dk_i\left\{\sum_{s=0}^{\infty}{\tau^s\eta_{\infty}(\tau^{-(s+1)}\tau\kappa_i)(1)^{1/q}}\right\}} \\
&= \sum_{i=1}^d{\dk_i\left\{\sum_{s=1}^{\infty}{\tau^s\eta_{\infty}(\tau^{-s}\kappa_i)(1)^{1/q}}\right\}} \\
&= \sum_{i=1}^d{\dk_i\left\{\sum_{s=1}^{\infty}{\tau^{s-1}\eta_{\infty}(\tau^{-s}\kappa_i)(1)}\right\}\tau} \\
&= \Xi_{\kappa}(\eta)\tau.
\end{align*}
Therefore, it suffices to check \ref{item:composition=identity} for $n=\dk_j$, $j\in \{1,\ldots,d\}$. 
\begin{align*}
\Xi_{\kappa}(\tRestau(-\otimes \dk_j)) &= \sum_{i=1}^d{\dk_i\left\{\sum_{s=0}^{\infty}{\tau^s\mathrm{coeff}_{0}(\tau^{-s}\kappa_i\dk_j)}\right\}} \\
&= \sum_{s=0}^{\infty}{\dk_j\tau^s\mathrm{coeff}_0(\tau^{-s})}\\
&=\dk_j.
\end{align*}
For point \ref{item:Xi-injective}, assume that $\eta$ is such that $\Xi_{\kappa}(\eta)=0$; \emph{i.e.} that $\eta_{\infty}(\tau^{-(s+1)}\kappa_i)(1)=0$ for all $i\in \{1,\ldots,d\}$ and all $s\geq 0$. We must show that $\eta=0$. \\
We know that $aM\ls{\sigma}=M\ls{\sigma}$ for all $a\in \KI\setminus\{0\}$. Hence, for all $m\in M$ and all $a\in \KI$, we may write $am$ as
\[
am=\sum_{i=1}^d{f_i(\sigma)\kappa_i} \quad \text{where~}f_i(\sigma)\in S\ls{\sigma}.
\]
For all $i$, we decompose $f_i(\sigma)$ uniquely as $f_i^{-}(\sigma)+f_i^{+}(\tau)$ where $f_i^{-}(\sigma)\in \sigma S\ps{\sigma}$ and $f_i^{+}(\tau)\in S[\tau]$. We then have
\begin{align*}
\eta(m)(a)&=\eta_{\infty}(m)(a)=\eta_{\infty}(am)(1)=\eta_{\infty}\left(\sum_{i=1}^d{f_i(\sigma)\kappa_i}\right)(1)\\
&=\eta_{\infty}\left(\sum_{i=1}^d{f_i^{-}(\sigma)\kappa_i}\right)(1)+\eta_{\infty}\left(\sum_{i=1}^d{f_i^{+}(\tau)\kappa_i}\right)(1)
\end{align*}
But both are zero: the former because $\eta_{\infty}(\tau^{-(s+1)}\kappa_i)(1)=0$ for all $i$ and $s\geq 0$ and  because $\eta_{\infty}$ is continuous for the $\infty$-adic topology; the latter because the expression in parenthesis belongs to $M$ and $1\in A$. 
\end{proof}

\begin{proof}[Proof of Theorem \ref{thm:res-perfect}]
As it is enough to prove the perfectness of the pairing after a faithfully flat base change $R\to S$, we may assume that there exists a choice of coordinates $\kappa$ for $E_S$ (hence $\Xi_{\kappa}$ is defined). Proposition \ref{prop:properties-of-Xi} then implies that the following diagram commutes:
\[
\begin{tikzcd}[column sep=5em]
N_S \arrow[r,"{\tRestau(-\otimes \star)}"] \arrow[dr,"\id_N"'] & {\Hom\left(\tau^*M_S,\Hom_{\mathbb{F}}^{\mathrm{cont}}(\KI/A,S)\right)} \arrow[d,"{\Xi_{\kappa}}"] \\
& N_S
\end{tikzcd}
\]
In particular $\Xi_\kappa$ is surjective; it is also injective by Proposition \ref{prop:properties-of-Xi}, hence is bijective and so is $\tRestau(-\otimes \star)$. As mentioned before, this implies that $\Restau(-\otimes \star)$ is bijective. 
%\Quentin{We deduce in particular that $N$ is finite projective. That the converse pairing  $\Restau(\star\otimes -)$ is bijective follows because both modules $\tau^*M$ and $N$ are locally-free.}

%\Andreas{Still missing that $\tRestau(\star\otimes -)$ is bijective. I think we get it as follows:}
Since by assumption $M$ (and hence also $\tau^*M$) is finite projective as $A\otimes R$-module, the isomorphism $\Restau(-\otimes \star)$ identifies $N$ with $(\tau^*M)^\vee\otimes_{A\otimes R} (\Omega_{A/\bF} \otimes R)$, where $(\tau^*M)^\vee$ denotes the dual module. 
 In particular, $N$ is also a finite projective $A\otimes R$-module of the same rank. Further via this identification, $\Restau$ is just the composition
\[\tau^*M\otimes_{A\otimes R} N \xrightarrow{\sim} \tau^*M\otimes_{A\otimes R} (\tau^*M)^\vee\otimes_{A\otimes R} (\Omega_{A/\bF} \otimes R) \xrightarrow{\mathrm{ev}\otimes \id} (A\otimes R) \otimes_{A\otimes R} (\Omega_{A/\bF} \otimes R)=\Omega_{A/\bF} \otimes R.\]
This shows that the map $\Restau(\star\otimes -)$ is bijective, too.
\end{proof}

\section{Applications}\label{sec:applications}
As before, let $E$ be an Anderson $A$-module over $R$. We let $M=M(E)$ be the motive of $E$ and $N=N(E)$ be its comotive.
In this section, we do not assume that $R$ is perfect unless explicitly stated. 
%However, we assume that the Frobenius endomorphism on $R$ is flat, so that $R\to R^{\mathrm{perf}}$ is faithfully flat.

\subsection{Abelian equals coabelian}
The main application of Theorem \ref{thm:res-perfect} is the ``abelian equals coabelian'' statement.
\begin{Corollary}\label{cor:abelian=coabelian}
Assume that $R$ is perfect. Then $E$ is abelian if, and only if $E$ is coabelian. 
\end{Corollary}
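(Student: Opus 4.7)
The plan is essentially to read off the corollary from Theorem \ref{thm:res-perfect}. By the symmetry of the hypothesis, it suffices to prove one implication. Suppose $E$ is abelian, so that $M=M(E)$ is a finite projective $A\otimes R$-module of constant rank. Because $R$ is perfect, the Frobenius $\tau$ acts as an automorphism on $R$ and hence on $A\otimes R$, so the pullback $\tau^*M$ remains finite projective of constant rank.

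Next I would note that $\ka\otimes R$ is finite projective over $A\otimes R$. Indeed, since $\Spec A$ is a smooth affine curve over $\bF$, the module $\ka=\Omega_{A/\bF}$ is finite projective of rank one over $A$, and finite projectivity is preserved by the base change $A\leadsto A\otimes R$. Theorem \ref{thm:res-perfect}, which is available since we are assuming $E$ to be abelian, then yields an $A\otimes R$-linear isomorphism
\[
N(E) \;\xrightarrow{\;\sim\;}\; \Hom_{A\otimes R}(\tau^*M,\;\ka\otimes R).
\]
The right-hand side is $\Hom_{A\otimes R}$ between two finite projective modules of constant rank, hence is itself finite projective of constant rank (locally it is a matrix module, and the property is Zariski-local on $\Spec(A\otimes R)$). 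Consequently $N(E)$ is finite projective of constant rank and $E$ is coabelian.

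The converse follows by the symmetric version of Theorem \ref{thm:res-perfect}, which applies under the coabelian hypothesis and gives
\[
\tau^*M(E) \;\xrightarrow{\;\sim\;}\; \Hom_{A\otimes R}(N,\;\ka\otimes R).
\]
The same reasoning shows that $\tau^*M$ is finite projective of constant rank, and applying the automorphism $(\tau^{-1})^*=\sigma^*$ of the category of $A\otimes R$-modules (available because $R$ is perfect) transports this back to $M$. I do not foresee a genuine obstacle at this stage: all the substance is absorbed into Theorem \ref{thm:res-perfect}, whose proof in turn relies on the topological comparison between the $\sigma$-adic and $\infty$-adic completions in Section \ref{sec:homeomorphism} and on the explicit inverse map $\Xi_\kappa$ built via a faithfully flat split of $E$. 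The only thing to double-check is that the perfectness of $R$ enters in exactly two places, namely to make $\tau$ (resp.\ $\sigma$) an automorphism of $A\otimes R$ and to allow Lemma \ref{lem:perf-respects-faithfulllyflat} to preserve perfectness under the descent step inside Theorem \ref{thm:res-perfect}; both are invoked here as a matter of course.
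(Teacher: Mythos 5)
Your proposal is correct and follows the same route the paper intends: Corollary~\ref{cor:abelian=coabelian} is left without a separate proof precisely because it drops out of Theorem~\ref{thm:res-perfect}, whose proof already records that the isomorphism $\Restau(-\otimes\star)$ identifies $N$ with $(\tau^*M)^\vee\otimes_{A\otimes R}(\ka\otimes R)$, forcing $N$ to be finite projective of the same rank. Your observations that $\ka\otimes R$ is finite projective, that $\Hom$ between finite projectives is finite projective, and that $\sigma^*$ transports the conclusion back to $M$ under the coabelian hypothesis are all exactly the intended bookkeeping.
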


It would be satsifying to know whether some version of Corollary \ref{cor:abelian=coabelian} extend beyond the perfect case\footnote{The fact that $N$ does not commute with base-change prevent the ``naive'' descent argument along $R\to R^{\mathrm{perf}}$ to work. A comparable mistake was made in a previous version of this text.}. Note that the techniques employed here  are probably useless as the reside-in-$\tau$ pairing does not exist in the imperfect situation. Indeed, if $E$ be an Anderson $A$-module over $R$, where $R$ is not necessarily perfect, then the pairing exists over its perfection:
\[
\tau^*M \otimes_{A\otimes R} N\longrightarrow \Omega^1_{A/\mathbb{F}}\otimes R^{\mathrm{perf}}.
\]
Even for Drinfeld modules, this map does not factor through $\Omega^1_{A/\mathbb{F}}\otimes R$ as the examples below shows (see Section \ref{sec:computations}). However, if $E$ is abelian and coabelian, then the source is a finite $A\otimes R$-module and hence factor through some $R^{1/q^b} \subset R^{\mathrm{perf}}$ for some $b\geq 1$. We expect that the constant $b(E)$ may be chosen to be not too large compared to $E$. The following conjecture was supported by several examples. 

\begin{Conjecture}
Assume that $R$ is a field. In Corollary \ref{cor:abelian=coabelian}, we may chose $|b(E)w(E)|<1$ for all weights $w$ of $E$.
\end{Conjecture}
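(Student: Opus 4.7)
Since $E$ is abelian (and hence, by Corollary \ref{cor:abelian=coabelian}, also coabelian) over an imperfect $R$, the residue-in-$\tau$ pairing $\tau^{*}M \otimes_{A\otimes R} N \to \Omega^{1}_{A/\bF} \otimes R^{\mathrm{perf}}$ must, since its source is a finite $A\otimes R$-module, factor through $\Omega^{1}_{A/\bF} \otimes R^{1/q^{b}}$ for some minimal $b=b(E) \geq 0$. The plan is to trace the construction of $\Restau$ in the non-perfect case carefully, locate the exact source of the ``perfection defect'', and bound it in terms of the weights.

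First, I would analyze how Proposition \ref{prop:continuous-Binfty-action}, which extends the $K$-action on $M\ls{\sigma}$ to a continuous $\mathcal{B}_{\infty}(R)$-action, degrades when $R$ is imperfect. The $q$-power map is the only place where perfection enters: each time one commutes $\sigma$ past a coefficient, one replaces $\sigma(r\otimes m)$ by $(r^{1/q}\otimes \sigma m)$, costing a $q$-th root of an element of $R$. Via the explicit formula in Remark \ref{rem:explicit-residue-duality}, the coefficient of $t^{k}\mathrm{d}t$ in $\Restau(m\otimes n)$ reads $\coeff_{0}(\tau\circ m\circ \Phi(\varpi^{k+1})\circ n) \in R\ls{\sigma}$; hence $b(E)$ is controlled by the number of $\sigma$-shifts of $m$ produced by multiplication by $\varpi^{k+1}$ in the relevant range of $k$.

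Next, I would quantify the interaction of $\tau$ with $\infty$-adic scaling using Proposition \ref{prop:st-lattice}: if $\tau\Lambda \subseteq a^{N}\Lambda$ sharply, then $\varpi = 1/t$ moves an element up by roughly $N/\deg(a)$ powers of $\sigma$. The weights $w(E)$ are precisely the slopes that control this interaction on each isotypic piece of $M$ with respect to the $\infty$-adic filtration; for each weight $w$, the sharp $N$ in Proposition \ref{prop:st-lattice} should satisfy $N/\deg(a) \sim 1/w$. Combining this with the observation that the series in Remark \ref{rem:explicit-residue-duality} is finite of length bounded by the $\tau$-degrees of $m$ and $n$ together with the weights, I expect the inequality $|b(E) w(E)| < 1$ to emerge from direct bookkeeping.

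The hard part will be obtaining a \emph{sharp} version of Proposition \ref{prop:st-lattice}: the existing proof only guarantees the existence of $N$, with no estimate. A Newton-polygon analysis of $\tau$ on a well-chosen $R$-basis of $M$, carried out over a faithfully flat extension along which $E$ trivializes and then descended using the flat-descent arguments of Section \ref{subsec:forms-of-Gad}, should give a bound of the right shape. As a sanity check before attempting the general case, I would first verify the conjecture against the explicit examples computed later in the paper (Drinfeld modules, tensor powers of the Carlitz module, and Maurischat's example); this should also pin down the correct normalization of $w(E)$ and confirm that the bound $|b(E)w(E)| < 1$ is tight rather than merely plausible.
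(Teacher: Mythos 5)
This statement is labeled \textbf{Conjecture} in the paper, and the paper does \emph{not} prove it: the authors say only that it ``was supported by several examples.'' So there is no proof in the paper to compare yours against; any evaluation has to be on the merits of the sketch alone.

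As written, your text is a research plan rather than a proof. Nearly every step is phrased as an intention (``I would analyze'', ``I would quantify'', ``I expect the inequality to emerge from direct bookkeeping'', ``The hard part will be''), and the single ingredient you correctly identify as decisive --- a sharp, weight-sensitive version of Proposition~\ref{prop:st-lattice} giving $N/\deg(a)\sim 1/w$ on each isoclinic piece --- is precisely the thing you defer. Until that estimate is actually proved and its normalization matched against the definition of $w(E)$ you are using, nothing in the plan yields the inequality $|b(E)w(E)|<1$.

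There is also a concrete error at the outset. You write ``Since $E$ is abelian (and hence, by Corollary~\ref{cor:abelian=coabelian}, also coabelian) over an imperfect $R$''. Corollary~\ref{cor:abelian=coabelian} is proved only for \emph{perfect} $R$; the paper explicitly flags that whether ``abelian $\Rightarrow$ coabelian'' extends to imperfect $R$ is open (see the footnote after the corollary). In the setting of the conjecture one must \emph{assume} that $E$ is both abelian and coabelian --- that is part of the hypothesis, not a consequence. A correct argument would have to keep $M(E)$ and $N(E)$ as independent finite-projective inputs and track the perfection defect symmetrically on both sides, rather than deriving one from the other. Finally, note that Propositions~\ref{prop:continuous-Binfty-action} and \ref{prop:st-lattice} themselves are stated and proved only over perfect $R$; you cannot ``trace the construction in the non-perfect case'' without first reproving (or suitably weakening) those statements over the imperfect base, which is another genuine gap the sketch does not address.
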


\begin{Remark}
We refer the reader to \emph{e.g.}, \cite[\S 5.2]{taelman-artin} or \cite[Definition 3.20]{gazda} for an account on the theory of \emph{weights} of Anderson modules and motives. We use absolute values in the statement, although $b(E)$ is always positive, because the sign of the weights depends on the reference. If one follows \cite{gazda}, then one may remove absolute values.
\end{Remark}

\subsection{Tensor equivalence}\label{subsec:tensor}
Let $E$ and $E'$ be two abelian Anderson $A$-modules over a perfect $A$-algebra $R$. The tensor product $M(E)\otimes M(E')$ in the category of $A$-motives over $R$ is well-defined and we say \emph{$E$ and $E'$ admit a tensor product} if $M(E)\otimes M(E')$ is in the essential image of the functor
\[
M:\mathbf{And}_R\longrightarrow A\mathbf{Mot}_R
\]
from the category of Anderson $A$-modules over $R$ to that of $A$-motives over $R$ \cite[Theorem 3.5]{hartl}. Note that, contrary to the case where $R$ is a field, it is unclear to us whether the tensor product of $E$ and $E'$ always exists. If it does, then all Anderson $A$-modules $E''$ satisfying $M(E'')\cong M(E)\otimes M(E')$ are isomorphic; we then call $E''$ \emph{a tensor product of $E$ and $E'$} and denote it by $E\otimes E'$.\\

We call $E\otimes^{\co} E'$, when it exists, the Anderson module obtained by a similar strategy, replacing the functor $M$ with $N$.
\begin{Corollary}
$E\otimes^{\mathrm{co}}E'$ is isogenous to $E\otimes E'$ when they exist.
\end{Corollary}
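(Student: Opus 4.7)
The plan is to leverage the residue-in-$\tau$ pairing of Theorem \ref{thm:res-perfect} to translate the defining relation $N(E\otimes^{\co}E')\cong N(E)\otimes_{A\otimes R}N(E')$ (on comotives) into a relation between the $A$-motives $M(E\otimes^{\co}E')$ and $M(E\otimes E')$, and then show this relation becomes an isomorphism after tensoring with $K$ over $A$, giving an isogeny.

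First, I would invoke Theorem \ref{thm:res-perfect} for $E$, $E'$ and $E\otimes E'$, all abelian (equivalently, coabelian, by Corollary \ref{cor:abelian=coabelian}). The pairing yields natural $\tau$-equivariant isomorphisms of comotives
\[
N(X)\;\cong\;\Hom_{A\otimes R}\!\left(\tau^*M(X),\,\ka\otimes R\right)\;\cong\;(\tau^*M(X))^{\vee}\otimes_{A\otimes R}(\ka\otimes R),
\]
where $(-)^{\vee}$ denotes the $A\otimes R$-dual. Since $M(E)$ and $M(E')$ are finite projective, the natural compatibilities between $\tau^*$, $\otimes_{A\otimes R}$ and $(-)^\vee$ give
\[
N(E)\otimes_{A\otimes R} N(E') \;\cong\; (\tau^*(M(E)\otimes_{A\otimes R}M(E')))^{\vee}\otimes_{A\otimes R}(\ka\otimes R)^{\otimes 2}\;\cong\; N(E\otimes E')\otimes_{A\otimes R}(\ka\otimes R),
\]
the last step applying the pairing backwards to $E\otimes E'$ and cancelling one factor of $\ka\otimes R$.

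Next, by the defining property of $\otimes^{\co}$, the left-hand side is $N(E\otimes^{\co} E')$. Applying the pairing in reverse and using that $\ka\otimes R$ is invertible over $A\otimes R$ while $\tau^*(\ka\otimes R)\cong\ka\otimes R$ (since $\ka$ lives over $A$), this translates into an isomorphism of $A$-motives
\[
M(E\otimes^{\co} E')\;\cong\;M(E\otimes E')\otimes_{A\otimes R}(\ka\otimes R)^{-1}.
\]
Finally, since $\ka=\Omega_{A/\mathbb{F}}^{1}$ is invertible over $A$ with $\ka\otimes_A K$ a one-dimensional $K$-vector space, any $K$-basis of $\ka\otimes_A K$ yields an isomorphism $\ka\otimes_A K\cong K$. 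Because $\tau$ acts trivially on $\ka$ (and on $K$), tensoring the previous isomorphism with $K$ over $A$ trivializes the twist and produces an isomorphism
\[
M(E\otimes^{\co} E')\otimes_A K\;\cong\;M(E\otimes E')\otimes_A K
\]
of $K\otimes R$-modules with compatible $\tau$-structure. This is the standard criterion for $E\otimes E'$ and $E\otimes^{\co}E'$ to be isogenous Anderson modules, completing the argument.

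The main obstacle I expect is the careful tracking of $\tau$-equivariance across the chain of identifications, especially checking that the natural isomorphisms (duality of tensor products, commutation of $\tau^*$ with $\otimes$, inversion of the pairing) are compatible with the $\tau$-structures coming from the motivic and comotivic tensor products. The two $\tau$-compatibility squares recorded in Theorem \ref{thm:res-perfect} handle the core step, and the remainder follows by naturality; still, one must be attentive to sign conventions for $\tau$ versus $\tau^{-1}$ when transporting structures through the dual.
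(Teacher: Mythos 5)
Your argument is correct and follows essentially the same route as the paper: apply the residue-in-$\tau$ pairing to rewrite motives as duals of comotives, pass through the defining identity for the motivic (resp.\ comotivic) tensor product, and end up with $M(E\otimes E')\cong M(E\otimes^{\co}E')\otimes_A\ka$ (equivalently, your $M(E\otimes^{\co}E')\cong M(E\otimes E')\otimes\ka^{-1}$), which is then an isogeny because $\ka$ is an invertible $A$-module. The only cosmetic differences are the direction of the chain (you manipulate $N$'s and then dualize back, the paper runs the chain directly on $\tau^*M$'s) and the last step: you trivialize the twist over $K$ and invoke a ``standard criterion'', whereas the paper picks out the $\ka$-twist directly and cites Hartl (Theorems 3.5 and 5.9) for the passage from isogeny of $A$-motives to isogeny of Anderson modules. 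Your remark on tracking $\tau$-equivariance is indeed the one point to be careful about, and it is exactly what the two commuting squares in Theorem~\ref{thm:res-perfect} are there to provide; the paper leaves this implicit.
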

\begin{proof}
We have the following sequence of isomorphisms of left-$A\otimes R[\tau]$-modules 
\begin{align*}
\tau^*M(E\otimes E') &= \tau^*M(E)\otimes_{A\otimes R} \tau^*M(E') \\
&=\Hom(N(E),\Omega^1_{A/\mathbb{F}}\otimes R)\otimes_{A\otimes R} \Hom(N(E'),\Omega^1_{A/\mathbb{F}}\otimes R) \\
&= \Hom(N(E)\otimes_{A\otimes R} N(E'),\Omega^1_{A/\mathbb{F}}\otimes R)\otimes_A \Omega^1_{A/\mathbb{F}} \\
&= \Hom(N(E\otimes^{\co} E'),\Omega^1_{A/\mathbb{F}}\otimes R)\otimes_A \Omega^1_{A/\mathbb{F}} \\
&= \tau^*M(E\otimes^{\co} E')\otimes_A \Omega^1_{A/\mathbb{F}}.
\end{align*}
For the former equality we used that pullbacks commute with tensor product, for the second we used Theorem \ref{thm:res-perfect}, for the third that all modules involved are finite projective, for the fourth the definition of $\otimes^{\mathrm{co}}$ and for the fifth we used Theorem \ref{thm:res-perfect} again.

Because $R$ is perfect, pullback by $\sigma$ yields $M(E\otimes E')\cong M(E\otimes^{\co} E')\otimes_A \Omega^1_{A/\mathbb{F}}$. This implies that $M(E\otimes E')$ and $M(E\otimes^{\co} E')$ are isogenous and we conclude using Theorems 3.5 and 5.9 in~\cite{hartl}.
\end{proof}

\subsection{Barsotti-Weil type formula}
In this section, we explain how to recover a formula due to Taelman \cite[Theorem 8.1.1]{taelman}, generalizing a formula of Papanikolas and Ramachandran \cite{PR}, and Woo \cite{woo}. We also refer the reader to the recent work of 
Głoch--K\c{e}dzierski--Krasoń \cite{gkk,kk,kp} for further results in this direction. The formula was subsequently generalized for general coefficients by Mornev \cite[Theorem 9.4]{mornev}. 

\begin{Theorem}\label{thm:barsotti-weil}
Let $E$ be an abelian Anderson module over a perfect $A$-algebra $R$. Then, there is an isomorphism of $A$-modules
\[
E(R)\cong \mathrm{Ext}^{1}_{R}(M(E)_R,\mathbbm{1}_R)\otimes_A \ka
\]
where the extension module is taken in the category of $A$-motives over $R$. 
\end{Theorem}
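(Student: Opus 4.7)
The plan is to chain together three identifications of $A$-modules. First, by Lemma \ref{lem:finite-over-R[tau]} we have the exact sequence
\[
0 \longrightarrow N(E) \xrightarrow{\id - \tau} N(E) \longrightarrow E(R) \longrightarrow 0,
\]
identifying $E(R)$ with $\mathrm{coker}(\id-\tau)$, where $\tau$ denotes the right-action of $\Frob_{\mathbb{G}_a}$ on $N(E)$---an $A$-linear, $R$-semilinear endomorphism. Second, since $E$ is abelian (hence coabelian by Corollary \ref{cor:abelian=coabelian}), the residue pairing of Theorem \ref{thm:res-perfect} supplies the $A\otimes R$-linear isomorphism
\[
N(E) \;\stackrel{\sim}{\longrightarrow}\; \Hom_{A\otimes R}\!\bigl(\tau^*M,\ka\otimes R\bigr) \;\cong\; (\tau^*M)^\vee \otimes_A \ka,
\]
the last identification resting on the finite projectivity of $M$ and the canonical $\ka \otimes R \cong \ka \otimes_A (A\otimes R)$.

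Third, I would compute $\mathrm{Ext}^1_R(M,\mathbbm{1}_R)$ via the standard Yoneda classification: an extension $0 \to \mathbbm{1}_R \to P \to M \to 0$ in effective $A$-motives splits as $A\otimes R$-modules (since $M$ is finite projective), so it is determined by an off-diagonal component $\phi \in (\tau^*M)^\vee$ of the $\tau$-structure of $P$, and two such extensions are equivalent iff the $\phi$'s differ by a coboundary $f\circ\tau_M - \tau^*f$ for some $f \in M^\vee$. Hence
\[
\mathrm{Ext}^1_R(M,\mathbbm{1}_R) \;\cong\; \mathrm{coker}\!\bigl(d\colon M^\vee \to (\tau^*M)^\vee\bigr), \qquad d(f):=f\circ\tau_M - \tau^*f.
\]
Because $\ka$ is a flat $A$-module (a line bundle on the smooth affine curve $\Spec A$), tensoring preserves cokernels and $\mathrm{Ext}^1_R(M,\mathbbm{1}_R)\otimes_A \ka \cong \mathrm{coker}(d\otimes_A\id_\ka)$.

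To conclude, I would match the two cokernel presentations through the pairing. Using the commutation identity $\Restau((\tau m)\otimes n) = \tau\,\Restau(m\otimes (n\tau))$ of Proposition \ref{prop:commutation-with-tau}, a direct computation shows that under the identifications above (including $(\tau^*M)^\vee \cong \tau^*(M^\vee)$ valid for finite projective $M$, and the invertibility of $\tau$ on $\ka\otimes R$ thanks to the perfectness of $R$), the endomorphism $\id$ on $N(E)$ corresponds to the map $M^\vee\otimes_A \ka \to (\tau^*M)^\vee\otimes_A \ka$ induced by $f \mapsto f\circ\tau_M$, while the right-$\tau$ action corresponds to the map induced by Frobenius pullback $f \mapsto \tau^*f$. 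Consequently $\id-\tau$ on $N(E)$ corresponds exactly to $d\otimes\id_\ka$, and passing to cokernels yields the desired isomorphism $E(R) \cong \mathrm{Ext}^1_R(M,\mathbbm{1}_R)\otimes_A \ka$.

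The main obstacle is this last matching step. It requires careful bookkeeping of several Frobenius twists that interact simultaneously: the $R$-semilinearity of the right-$\tau$ on $N$, the Frobenius pullback $\tau^*$ on the $A\otimes R$-linear duals, the compatibility $\ka\otimes R \cong \ka\otimes_A(A\otimes R)$, and the $q$-th power occurring in the commutation formula of Proposition \ref{prop:commutation-with-tau}. The perfectness of $R$ is what allows these twists to cancel out cleanly, and Proposition \ref{prop:commutation-with-tau} provides exactly the identity linking right-composition with $\tau$ on the comotive side with left-composition with $\tau_M$ on the motive side; the remainder of the argument is then a routine diagram chase.
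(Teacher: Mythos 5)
Your proposal is correct and follows essentially the same route as the paper: Lemma~\ref{lem:finite-over-R[tau]} for the cokernel presentation of $E(R)$, Theorem~\ref{thm:res-perfect} to pass from $N(E)$ to $\Hom_{A\otimes R}(\tau^*M,\ka\otimes R)$, and Proposition~\ref{prop:commutation-with-tau} to identify $\id-\tau$ with the dual coboundary. The only minor difference is that you compute $\mathrm{Ext}^1$ via the Yoneda/Baer classification of extensions, whereas the paper constructs an explicit projective resolution $0\to\tau^*\Theta(M)\to\Theta(M)\to M\to 0$ with $\Theta(M)=\bigoplus_{i\geq 0}\tau^{i*}M$ in $(A\otimes R)[\tau]$-modules and applies $\Hom(-,\mathbbm{1})$; both yield the same two-term complex $M^\vee\to(\tau^*M)^\vee$.
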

\begin{proof}
By definition of morphisms into the category of $A$-motives over $R$, the inclusion of effective $A$-motives in the category of left\nobreakdash-$(A\otimes R)[\tau]$\nobreakdash-modules is full and faithful. Consequently, extensions of effective $A$-motives can be computed there. Because $M=M(E)$ is finite projective over $A\otimes R$, so is $\Theta(M):=\bigoplus_{i\geq 0}{\tau^{i*}M}$ over $(A\otimes R)[\tau]$ where $\tau$ now acts as $\tau_{\Theta}:(m_0,m_1,m_2,\ldots)\mapsto (0,\tau^* m_0,\tau^*m_1,\ldots)$. In addition, we have a map $\theta:\Theta(M)\to M$, $(m_i)_i\mapsto \sum_{i}{\tau^i_M(m_i)}$.  It is easy to see that $0\to \tau^*\Theta(M) \xrightarrow{\tau_\Theta-\tau_M} \Theta(M)\xrightarrow{\theta} M\to 0$ is a projective resolution of $M$ in the category of $(A\otimes R)[\tau]$-modules, and applying $\Hom_{(A\otimes R)[\tau]}(-,\mathbbm{1})$ to it leads to a long exact sequence of $A$-modules
\begin{equation}\label{eq:les}
0\to \Hom_{A\mathbf{Mot}_R}(M,\mathbbm{1})\to \Hom_{A\otimes R}(M,A\otimes R) \xrightarrow{\tau-\tau_M^{\vee}} \Hom_{A\otimes R}(\tau^* M,A\otimes R) \to \mathrm{Ext}^1_{A\mathbf{Mot}_R}(M,\mathbbm{1})\to 0
\end{equation}
where the middle arrow acts as $f\mapsto \tau\circ f-f\circ \tau_M$. Note that in the category of $A$-modules, $\tau^*H$ and $H$ are identified for all $A\otimes R$-algebra $H$ (the $R$-module structure is modified but not the $A$-module one). As such, we have
\begin{align*}
\Hom_{A\otimes R}(M,A\otimes R) &= \tau^*\Hom_{A\otimes R}(M,A\otimes R) \\
&\cong \Hom_{\tau^* A\otimes R}(\tau^* M,\tau^* A\otimes R) \\
&\cong \Hom_{A\otimes R}(\tau^*M,A\otimes R)
\end{align*}
in the category of $A$-modules. Consequently, we voluntarily forget $\tau$-pullbacks in homomorphisms when considered as $A$-modules. 

On the other-hand, by Proposition \ref{prop:commutation-with-tau} we have a commutative diagram
\[
\begin{tikzcd}[column sep=3em]
\Hom_{A\otimes R}(M, \ka\otimes R) \arrow[r,"\tau-\tau_M^{\vee}"]& \Hom_{A\otimes R}(M, \ka\otimes R) \\
N \arrow[u,"{n\mapsto \mathrm{Res}_{\tau}(-\otimes n)}"]\arrow[r,"\id-\tau"] & N\arrow[u,"{n\mapsto \mathrm{Res}_{\tau}(-\otimes n)}"']
\end{tikzcd}
\]
whose top row identifies with the middle arrow of \eqref{eq:les} in the category of $A$-modules. In addition, the vertical maps are isomorphisms by Theorem \ref{thm:res-perfect}. Altogether, the sequence \eqref{eq:les} becomes:
\[
0\to \Hom_{A\mathbf{Mot}_R}(M,\mathbbm{1})\otimes_A \ka \to N(E) \xrightarrow{\id-\tau} N(E) \to \mathrm{Ext}^1_{A\mathbf{Mot}_R}(M,\mathbbm{1})\otimes \ka \to 0
\]
and we conclude using Lemma \ref{lem:finite-over-R[tau]}.
\end{proof}

\begin{Remark}
We believe that the assumption that $R$ is perfect is not needed for the statement of Theorem \ref{thm:barsotti-weil}. But the proof would then require different techniques, closer to Mornev's approach in \cite[\S 8.9]{mornev-shtuka}.
\end{Remark}

\subsection{Twisted $L$-series of Anderson modules}
In \cite{angles-tavares-ribeiro}, Anglès--Tavares--Ribeiro introduced a deformation of (models of) Drinfeld modules where a transcendent variable $T$ appears. Their definition depends \emph{a priori} on choices of coordinates. Using the residue-pairing, we show that it does not depend on such a choice by providing an alternative construction of the $T$-deformation. \\

Let $E$ be an abelian Anderson $A$-module over an $A$-algebra $R$. Let $N(E)=\Hom(\mathbb{G}_a,E)$ denote its $A$-comotive.
\begin{Definition}
We define the \emph{$T$-deformation of $E$} to be the functor
\[
E_T: \mathbf{Alg}_R\longrightarrow \mathbf{Mod}_{A[T]}, \quad S\longmapsto N(E_S)
\]
where $N(E_S)$ is seen as an $A[T]$-module where $T$ acts as $\tau$.
\end{Definition}
\begin{Remark}
The naming \emph{$T$-deformation} is understood as follows. From the exact sequence $0\to N(E)\xrightarrow{\id-\tau} N(E)\to E(R)\to 0$ of Lemma \ref{lem:finite-over-R[tau]}, we have a commutative diagram
\[
\begin{tikzcd}
\mathbf{Alg}_R \arrow[r,"E_T"]\arrow[dr,"E"'] & \mathbf{Mod}_{A[T]} \arrow[d,"T=1"]\\
& \mathbf{Mod}_{A}
\end{tikzcd}
\]
In particular, the data of $E_T$ recovers that of $E$ at $T=1$.
\end{Remark}

If $R=k$ is a finite field, then $E_T(k)$ is a finite $A[T]$-module. In particular we may consider its Fitting ideal. The next proposition proves the compatibility of $L$-series as considered in \cite{angles-tavares-ribeiro} and the ones defined in \cite{caruso-gazda}.
\begin{Proposition}
We have $\mathrm{Fitt}~E_T(k)=\det_{A[T]}(T-\tau~|~M(E)\otimes_k k[T])$.
\end{Proposition}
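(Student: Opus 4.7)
The plan is to reformulate both sides as characteristic polynomials of the $A$-linear operator $\tau$ and to use the residue pairing of Theorem~\ref{thm:res-perfect} to identify them.

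Since $E$ is abelian over the finite (hence perfect) field $k$, Corollary~\ref{cor:abelian=coabelian} guarantees that $N(E)$ is finite projective over $A\otimes k$, and thus over $A$ (because $A\otimes k$ is free of rank $[k:\FF]$ over $A$). The right $\tau$-action on $N(E)$ commutes with $A$-scalars and so defines an $A$-linear endomorphism, so the tautological free presentation
\[
0 \longrightarrow N(E)\otimes_A A[T] \stackrel{T-\tau}{\longrightarrow} N(E)\otimes_A A[T] \longrightarrow E_T(k) \longrightarrow 0
\]
of finitely generated $A[T]$-modules identifies $\mathrm{Fitt}_{A[T]}\,E_T(k)$ with the principal ideal generated by $\chi^A_\tau(T;N(E)) := \det_{A[T]}(T-\tau\mid N(E)\otimes_A A[T])$. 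Similarly, since $M(E)\otimes_k k[T] = M(E)\otimes_A A[T]$ (both equal $M(E)\otimes_\FF \FF[T]$), the right-hand side of the claimed equality is precisely $\chi^A_\tau(T;M(E))$. The proposition is therefore equivalent to the identity $\chi^A_\tau(T;N(E)) = \chi^A_\tau(T;M(E))$ in $A[T]$.

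To establish this equality, Theorem~\ref{thm:res-perfect} furnishes an $A\otimes k$-linear isomorphism $N(E) \cong \Hom_{A\otimes k}(\tau^*M(E),\ka\otimes k)$ whose compatibility diagram with $\tau$ identifies the right $\tau$-action on $N(E)$ with the operator $\eta\mapsto \tau^{-1}\circ\eta\circ\tau$ on the Hom module. Forgetting the $k$-structure, this is an isomorphism of $A$-modules intertwining the $A$-linear operators $\tau$ on each side. A direct semi-linear algebra computation---carried out most transparently in a local rank-$r$ basis of $M$ combined with a trivialization of the rank-one module $\ka$---then verifies that if $\tau_M:\tau^*M\to M$ is represented by a matrix $\Phi\in\mathrm{Mat}_r(A\otimes k)$, then $\eta\mapsto \tau^{-1}\circ\eta\circ\tau$ is represented by the \emph{same} matrix $\Phi$ in the corresponding dual basis of the Hom module. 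The Frobenius twist contributed by $\tau^{-1}$ on $\ka\otimes k$ exactly cancels the twist coming from the pullback $\tau^*$ and the dualization, yielding the sought coincidence of characteristic polynomials.

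The main technical point is precisely this final cancellation of Frobenius twists, which must be tracked carefully. A conceptually cleaner alternative is to apply the residue pairing term by term to a projective resolution of $M(E)$ as $(A\otimes k)[\tau]$-module (in the spirit of the proof of Theorem~\ref{thm:barsotti-weil}), bypassing the explicit matrix computation entirely.
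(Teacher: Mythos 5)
The overall strategy matches the paper's: reduce $\mathrm{Fitt}\,E_T(k)$ to an $A[T]$-characteristic polynomial of $\tau$ acting on $N(E)$ via the exact sequence of Lemma~\ref{lem:finite-over-R[tau]}, then transport through the residue pairing to the motive side. Your Step~1 and Step~2 are exactly what the paper does.

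The gap is in the last step. You assert that a ``direct semi-linear algebra computation'' shows that $\eta\mapsto\tau^{-1}\circ\eta\circ\tau$ on $\Hom_{A\otimes k}(\tau^*M,\ka\otimes k)$ is represented by the same matrix $\Phi$ as $\tau_M$, hence has the same characteristic polynomial. There are two problems. First, having the same $r\times r$ matrix over $A\otimes k$ does not by itself yield the equality of the $A[T]$-determinants: the operators in question are only $A$-linear, not $A\otimes k$-linear (they are semilinear for the Frobenius of $k$), so $\det_{A[T]}(T-\tau\,|\,\cdot)$ is a degree-$rd$ polynomial, with $d=[k:\FF]$, and is not literally ``$\det(T-\Phi)$''. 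The bridge between the naive matrix over $A\otimes k$ and the actual $A[T]$-determinant is exactly the content of B\"ockle--Pink \cite[Lemma~8.1.4]{boeckle-pink}, which rewrites $\det_{A[T]}(T-\phi)$ as $\det_{A\otimes k[T]}(T^d-\phi^d)$, and this is what the paper invokes (twice). Second, even at the level of semilinear matrices there is a subtle asymmetry you gloss over: $\tau_M$ acting on $M$ is $\tau$-semilinear over $k$, whereas the right $\tau$-action on $N$ (equivalently $\eta\mapsto\tau^{-1}\eta\tau$ on the Hom module) is $\sigma$-semilinear; the identity after applying B\"ockle--Pink on both sides reads
\[
\det\bigl(T^d-\Phi\,\tau(\Phi)\cdots\tau^{d-1}(\Phi)\bigr)=\det\bigl(T^d-\bigl(\tau(\Phi)\cdots\tau^{d-1}(\Phi)\,\Phi\bigr)^{t}\bigr),
\]
which holds, but only after transpose-invariance of characteristic polynomials \emph{and} the cyclic invariance $\det(X-AB)=\det(X-BA)$; it is not the tautology your phrasing suggests. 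Your alternative suggestion (running the residue pairing through a projective resolution as in Theorem~\ref{thm:barsotti-weil}) is not developed enough to see whether it avoids these points. So the proposal identifies the right tools (the exact sequence, the residue pairing, and $M=\tau^*M$ as $A$-modules) but omits the B\"ockle--Pink step, which is the actual technical heart of the proof.
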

\begin{proof}
Let $d$ be the dimension of $k$ over $\mathbb{F}$. From the exact sequence $0\to N(E)\otimes_k k[T]\xrightarrow{T-\tau} N(E)\otimes_k k[T]\to E_T(k) \to 0$ of Lemma \ref{lem:finite-over-R[tau]}, we deduce
\begin{align*}
\mathrm{Fitt}~E_T(k)&=\mathrm{det}_{A[T]}(T-\tau~|~N(E)\otimes_k k[T]) \\
&=\mathrm{det}_{A\otimes k[T]}(T^d-\tau^d~|~N(E)\otimes_k k[T]) \\
&=\mathrm{det}_{A\otimes k[T]}(T^d-\tau^d~|~\Hom_{A\otimes k}(\tau^*M(E),\ka\otimes_R k[T]) \\
&=\mathrm{det}_{A\otimes k[T]}(T^d-\tau^d~|~\tau^*M(E)\otimes_k k[T]) \\
&=\mathrm{det}_{A[T]}(T-\tau~|~\tau^*M(E)\otimes_k k[T]) \\
&=\mathrm{det}_{A[T]}(T-\tau~|~M(E)\otimes_k k[T]).
\end{align*}
The second and fifth equalities follow from \cite[Lemma 8.1.4]{boeckle-pink}, the third from the residue-in-$\tau$ pairing. The fourth equality is the compatibility among determinants and duals, and the last from the fact that $M(E)=\tau^*M(E)$ as an $A$-module.
\end{proof}

\section{Computations}\label{sec:computations}

In this final section, we illustrate the residue-in-$\tau$ pairing by computing some examples.

Throughout, the curve $C$ is the projective line $\mathbb{P}^1$ over $\mathbb{F}$ and $\infty$ is the ``north pole'' $[0:1]$. Thus $A$ identifies with the polynomial ring $\mathbb{F}[t]$ where $t$ is any function on $\mathbb{P}^1$ with a simple pole at $\infty$ and regular elsewhere. The field $K_{\infty}$ identifies with $\mathbb{F}\ls{\varpi}$ where $\varpi:=1/t$.

We let $R$ be a perfect $\mathbb{F}[t]$-algebra and denote by $\theta$ the image of $t$ in $R$.
For what follows, we identify the ring $A\otimes R$ as $R[t]$.

We recall from Remark \ref{rem:explicit-residue-duality} that in this setting for an abelian Anderson $t$-module $(E,\Phi)$, the residue-in-$\tau$ pairing is given by
\begin{align}
\Restau:\tau^*M(E)\otimes_{R[t]}N(E) &\longrightarrow R[t]dt,\notag \\
 m\otimes n \hspace*{1.15cm} &\longmapsto -\sum_{k=1}^\infty \mathrm{coeff}_{0}\left( \tau\circ m\circ \Phi(\varpi^{k})\circ n\right) t^{k-1} dt. \label{eq:res-pairing}
\end{align}

\subsection{Drinfeld modules}
Let $E$ be a Drinfeld module of rank $r$ over $R$ \cite[Definition 3.7]{hartl}. By working Zariski locally on $\Spec R$, we may assume that $E$ is equal to $\mathbb{G}_a$ so that the $\mathbb{F}[t]$-module structure on $E$ amounts to a ring homomorphism $\Phi:\mathbb{F}[t]\to R[\tau]$ of the form
\[
\Phi(t):=\theta+g_1\tau+\ldots+g_r\tau^r
\]
where $g_i\in R$ and $g_r\in R^{\times}$.
% We recall that $\tau$ denotes the $q$-Frobenius seen as an $\mathbb{F}$-linear endomorphism of $\mathbb{G}_a$
It will be convenient to extend the notation $g_i$ for $i\in \mathbb{N}$ by declaring $g_0:=\theta$ and $g_i=0$ for $i>r$.\\ 

It is well-known that the motive and the comotive of $E$ are free of rank $r$ over $R[t]$ with basis given by $(1,\tau,\ldots,\tau^{r-1})$. In particular $E$ is both abelian and coabelian.

The computation of the pairing was partially done in \cite[Example 2.5.16]{hartl-juschka}. But we will see that our approach admits a much shorter calculation.

By $R[t]$-linearity, it suffices to do the computation of the differentials $\mathrm{Res}_{\tau}(\tau^i\otimes \tau^j)$ for all integers $0\leq i,j<r$. In this direction, we prove:
\begin{Proposition}\label{prop:computation-drinfeld}
The map \eqref{eq:res-pairing} sends $\tau^i\otimes \tau^j$ to the differential form
\[
\sum_{\substack{n\in \mathbb{N}_{>0},~v_i\in \{1,\ldots,r\} \\ v_1+\ldots+v_n=1+i+j-r}}{\!(-1)^{n+1}\!\left(\frac{g_{r-v_1}}{g_r}\right)^{q^{v_1+\ldots+v_n-j}}\!\left(\frac{g_{r-v_2}}{g_r}\right)^{q^{v_2+\ldots+v_n-j}}\!\cdots\!\left(\frac{g_{r-v_n}}{g_r}\right)^{q^{v_n-j}}\!\frac{dt}{g_r^{q^{-j}}}}
\]
\end{Proposition}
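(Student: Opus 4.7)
The strategy is to reduce the pairing to a single Laurent coefficient in $R\ls{\sigma}$, obtained by inverting $\Phi(t)$ via a geometric series. Since $\Restau$ is $R[t]$-bilinear and both $M(E)$ and $N(E)$ are free over $R[t]$ on $(\tau^i)_{0\le i<r}$, it suffices to compute $\Restau(\tau^i\otimes\tau^j)$ for $0\le i,j<r$. By formula \eqref{eq:res-pairing}, this equals
\[ \Restau(\tau^i\otimes\tau^j) \;=\; -\sum_{k\ge 1}\mathrm{coeff}_0\bigl(\tau^{i+1}\,\Phi(\varpi^k)\,\tau^j\bigr)\,t^{k-1}\,dt. \]
My plan is: (a) a $\tau$-degree count to collapse the sum to the single term $k=1$; (b) an explicit inversion of $\Phi(t)$ in $R\ls{\sigma}$; (c) extraction of the relevant coefficient via the formalism of Remark \ref{rem:convention-coeff}.

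For (a), since $g_r\in R^\times$, the $\tau$-leading coefficient of $\Phi(t)^k$ is a product of $q^i$-powers of $g_r$, hence a unit; so $\deg_\tau\Phi(t)^k=rk$ and $\deg_\tau\Phi(\varpi^k)=-rk$ in $R\ls{\sigma}$. By submultiplicativity, $\deg_\tau\bigl(\tau^{i+1}\Phi(\varpi^k)\tau^j\bigr)\le(i+1+j)-rk$, which is strictly negative once $k\ge 2$ because $i+1+j\le 2r-1<2r$. Only the $k=1$ summand survives.

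For (b), factor $\Phi(t)=g_r\tau^r(1+B)$ where $B:=\sum_{v=1}^r h_v^{q^{-r}}\sigma^v$ and $h_v:=g_{r-v}/g_r$; the factorisation uses $\tau^r a=a^{q^r}\tau^r$. Since $B\in\sigma R\ps{\sigma}$, the geometric series $(1+B)^{-1}=\sum_{n\ge 0}(-B)^n$ converges in the $\sigma$-adic topology. Expanding $B^n$ multi-index-wise and commuting each scalar past the subsequent $\sigma$-powers via $\sigma a=a^{1/q}\sigma$ (here the perfection of $R$ enters), then multiplying by $\sigma^r g_r^{-1}$ on the right and pushing all remaining scalars past $\sigma^{S_n+r}$ using $c\sigma^\ell=\sigma^\ell c^{q^\ell}$, yields
\[ \Phi(\varpi) \;=\; \sum_{n\ge 0}\ \sum_{\vec v\in\{1,\ldots,r\}^n}(-1)^n\,\sigma^{S_n+r}\,\Bigl(\prod_{k=1}^n h_{v_k}^{q^{S_n-S_{k-1}}}\Bigr)\,g_r^{-1}, \qquad S_k:=v_1+\ldots+v_k. \]

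For (c), the identity $\mathrm{coeff}_0(\tau^{i+1}x\tau^j)=\mathrm{coeff}_{-(i+1+j)}(x)^{q^{-j}}$ (a consequence of $a\tau^j=\tau^j a^{q^{-j}}$ and Remark \ref{rem:convention-coeff}, again using perfection of $R$) isolates precisely the terms of the expansion with $S_n=i+1+j-r$. Recognising that $S_n-S_{k-1}=v_k+v_{k+1}+\ldots+v_n$, raising each factor $h_{v_k}^{q^{S_n-S_{k-1}}}$ and $g_r^{-1}$ to the $q^{-j}$-power, and absorbing the outer minus sign into $(-1)^n\mapsto(-1)^{n+1}$, produces the stated closed form. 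The main obstacle is the noncommutative bookkeeping in step (b): each $h_{v_k}^{q^{-r}}$ collects successive $q^{v_{k+1}}, q^{v_{k+2}},\ldots,q^{v_n},q^r$-twists as it migrates past the remaining $\sigma$-blocks, and one must verify that these combine exactly into the exponent $q^{S_n-S_{k-1}}$ (and then to $q^{v_k+\ldots+v_n-j}$ after the final $q^{-j}$-twist); once this indexing is pinned down, the remainder of the argument is essentially relabeling.
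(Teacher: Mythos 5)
Your proof is correct and takes the same approach as the paper: a $\tau$-degree count to isolate the $k=1$ term of \eqref{eq:res-pairing}, inversion of $\Phi(t)$ by factoring out the leading term and expanding a geometric series in $\sigma$, and extraction of the relevant Laurent coefficient. The only cosmetic difference is that you factor $\Phi(t)=g_r\tau^r(1+B)$ so that $\sigma^r$ sits to the right of the geometric series, whereas the paper writes $\Phi(t)=g_r(\tfrac{g_0}{g_r}\sigma^r+\cdots+1)\tau^r$ so that $\sigma^r$ sits on the left; both yield the same expansion after commuting scalars across the $\sigma$-powers.
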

\begin{proof}
%By construction, $\mathrm{Res}_{\tau}(\tau^i\otimes \tau^j)$ is the differential corresponding to the continuous homomorphism $\eta_{i,j}:\mathbb{F}(\!(\varpi)\!)/\mathbb{F}[t]\to R$ mapping the class of $f\in \mathbb{F}(\!(\varpi)\!)$ to the coefficient in $\sigma$ of $\tau^i \Phi(f) \tau^j\in R\ls{\sigma}$. As the elements of $\varpi\mathbb{F}[\![\varpi]\!]$ form a system of representatives of the quotient $\mathbb{F}(\!(\varpi)\!)/\mathbb{F}[t]$, it suffices by linearity and continuity of $\eta_{i,j}$ to compute the image of $1/t^k=\varpi^{k}$ for all $k>0$. 

By writing 
\[ \Phi(t)=g_r(\frac{g_0}{g_r}\sigma^r+\frac{g_1}{g_r}\sigma^{r-1}+\ldots+\frac{g_{r-1}}{g_r}\sigma+1)\tau^r,\]
 we see that 
 \[ \Phi(t)^{-1}=  \sigma^r \left(1+\frac{g_{r-1}}{g_r}\sigma+\ldots+\frac{g_1}{g_r}\sigma^{r-1}+\frac{g_0}{g_r}\sigma^r\right)^{-1} \frac{1}{g_r} \in \sigma^rR\ps{\sigma}. \] In particular, we deduce that the zeroth coefficient of $\tau^{i+1}\Phi(\varpi^k)\tau^j=\tau^{i+1}\Phi(t)^{-k}\tau^j$ is zero whenever $i+j+1<kr$. Since we chose $i,j\leq r-1$, this shows that this coefficient is $0$ if $k\geq 2$. 
%In other words, $\eta_{i,j}$ corresponds to a scalar multiple of the differential\footnote{See Remark \ref{rem:explicit-residue-duality}
%Recall that the unique continuous linear map $\mathbb{F}(\!(\varpi)\!)/\mathbb{F}[t]\to R$ sending the class of $\varpi^k$ to $1$ if $k=1$ and to zero otherwise coincide with $a\mapsto \mathrm{res}(a\frac{d\varpi}{\varpi^2})=-\mathrm{res}_{\infty}(a(t)dt)$; hence it corresponds to $-dt$ through residue duality.
%} $-dt$.

Therefore, the computation of the pairing reduces to that of the zeroth coefficient of $\tau^{i+1}\Phi(t)^{-1}\tau^j$. We have:
\begin{align*}
\Phi(t)^{-1} &=\sigma^r \left(1+\frac{g_{r-1}}{g_r}\sigma+\ldots+\frac{g_1}{g_r}\sigma^{r-1}+\frac{g_0}{g_r}\sigma^r\right)^{-1} \frac{1}{g_r} \\
&=\sigma^r\left[\sum_{n=0}^{\infty}{(-1)^n\left(\frac{g_{r-1}}{g_r}\sigma+\ldots+\frac{g_1}{g_r}\sigma^{r-1}+\frac{g_0}{g_r}\sigma^r\right)^n}\right]\frac{1}{g_r}
\end{align*}
The summand can be expanded as follows:
\[
\left(\frac{g_{r-1}}{g_r}\sigma+\ldots+\frac{g_1}{g_r}\sigma^{r-1}+\frac{g_0}{g_r}\sigma^r\right)^n =\sum_{1\leq v_1,\ldots, v_n\leq r}{\left(\frac{g_{r-v_1}}{g_r}\right)\sigma^{v_1}\left(\frac{g_{r-v_2}}{g_r}\right)\sigma^{v_2}\cdots \left(\frac{g_{r-v_n}}{g_r}\right)\sigma^{v_n}}.
\]
Commuting all $\sigma^{v_i}$ to the left, altogether we obtain
\[
\Phi(t)^{-1}=\sum_{m\geq 0}{\sigma^{r+m}\!\sum_{\substack{n\in \mathbb{N}_{>0},~v_i\in \{1,\ldots,r\} \\ v_1+\ldots+v_n=m}}{\!(-1)^n\!\left(\frac{g_{r-v_1}}{g_r}\right)^{q^{v_1+\ldots+v_n}}\!\left(\frac{g_{r-v_2}}{g_r}\right)^{q^{v_2+\ldots+v_n}}\!\cdots\!\left(\frac{g_{r-v_n}}{g_r}\right)^{q^{v_n}}\!\frac{1}{g_r}}}.
\]
This formula suffices to conclude. 
\end{proof}

\begin{Remark}
We thank Ferraro for pointing out that Proposition \ref{prop:computation-drinfeld} could also be recovered as the combination of his Theorems 7.18 and 7.38 in \cite{ferraro}.
\end{Remark}

\subsection{Tensor powers of the Carlitz module}
Recall that the Carlitz module $\carl$ is the Drinfeld module of rank one which is equal to $\mathbb{G}_a$ and whose action of $t$ is described by $\Phi(t)=\theta+\tau$. Both its motive and comotive are free of rank one over $R$, generated by $\id_{\mathbb{G}_a}$. According to Proposition \ref{prop:computation-drinfeld}, we have 
\[
\mathrm{Res}_{\tau}(\id_{\mathbb{G}_a}\otimes \id_{\mathbb{G}_a})=-dt.
\]
We now undertake the computation of the $d$th tensor power of the Carlitz module, where $d$ is a positive integer, and where the tensor product is either $\otimes$ or $\otimes^{\co}$ of Subsection \ref{subsec:tensor} (because $\ka\cong \mathbb{F}[t]$, the operations $\otimes$ and $\otimes^{\co}$ are equivalent). 

Recall from e.g. \cite[Section 1.5.3]{rapid-intro} 
that the \emph{$d$-th tensor power $\carl^{\otimes d}$ of the Carlitz module} is the Anderson module 
which, as an $\mathbb{F}$-vector space scheme is $\mathbb{G}_a^d$, and whose $t$-action corresponds in canonical coordinates to the matrix
\[ \Phi(t) =\begin{pmatrix} 
\theta & 1 & 0 & \cdots & 0\\ 
0 & \ddots & \ddots & \ddots & \vdots\\
\vdots & \ddots &  \ddots & \ddots & 0 \\
0 & & \ddots & \ddots & 1 \\
\tau & 0 & \cdots & 0 & \theta \end{pmatrix} \in \Mat_{d\times d}(R\sp{\tau}).
\]
The motive $M(\carl^{\otimes d})$ is a free $R[t]$-module of rank $1$ with basis $\{\kappa_1:\mathbb{G}_a^d\to \mathbb{G}_a\}$, where $\kappa_1$ is the projection to the first coordinate, and the comotive $N(\carl^{\otimes d})$ is a free $R[t]$-module of rank $1$ with basis $\{\dk_d:\mathbb{G}_a\to \mathbb{G}_a^d\}$, where $\dk_d$ corresponds to the $d$th coordinate. Hence computing the pairing, due to $R[t]$-sesquilinearity, amounts to consider $\kappa_1\circ \Phi(f) \circ \dk_d$ for $f=\varpi^k=t^{-k}\in \KI$ (with $k\geq 1$). In matrix view, this is just the entry in the upper right corner of the matrix $\Phi(f)$ and $\coeff_{0}( \tau\circ \kappa_1\circ \Phi(f)\circ \dk_d)$ is just the right coefficient in $\sigma$ of this entry.

The matrix $\Phi(\varpi)=\Phi(t)^{-1}$ can be computed using the Gaussian algorithms, taking into account that row operations correspond to multiplications with matrices from the left, \emph{i.e.}, that we have to multiply scalars in the elementary operations from the left. This results in $\Phi(\varpi)=C_0+\sigma D$ where
\begin{align*}
 C_0 &=\begin{pmatrix} 
 0 & \cdots & \cdots & \cdots & 0\\ 
1 & \ddots &  &  & \vdots\\
-\theta & \ddots & \ddots & & \vdots \\
%\vdots & \ddots &  \ddots & \ddots & 0 \\
\vdots & \ddots & \ddots & \ddots & \vdots \\
(-\theta)^{d-2} & \cdots & -\theta & 1 & 0 \end{pmatrix}\in  \Mat_{d\times d}(R),\text{ and } \\[2mm]
D &= \left(  (-\theta)^{q(i-1)}  \left(1-\sigma (-\theta)^{qd}\right)^{-1} (-\theta)^{d-j}   \right)_{i,j}\in  \Mat_{d\times d}(R\sps{\sigma}).
\end{align*}
The coefficient of $\sigma$ of the upper right entry of $\Phi(\varpi)=C_0+\sigma D$ is therefore the coefficient of $\sigma^0$ of the upper right entry of $D$, \emph{i.e.}, equals $1$.\\
For $k\geq 2$, we have
\[ \Phi(\varpi^{k})=(C_0+\sigma D)^k\equiv C_0^k + C_0^{k-1}\sigma D+C_0^{k-2}\sigma D C_0+\cdots + \sigma D C_0^{k-1} \mod
 \sigma^2. \]
Since the first row and the last column of $C_0$ are zero, the upper right entries of all the products on the right hand side are $0$. Hence, $\coeff_{0}(\tau\circ \kappa_1\circ \Phi(\varpi^k) \circ\dk_d)=0$ as soon as $k\geq 2$.

In total, we obtain
\[ \Restau(\kappa_1\otimes \dk_d) = -dt. \]

\subsection{Maurischat's example}

In \cite[Example 6.3]{maurischat}, the second author provided an example of a simple Anderson $t$-module which nevertheless is not pure (unless the characteristic of $\bF$ is $2$).
It is defined over the rational function field $\Fq(\theta)$, and is of dimension $2$ and rank $3$. In canonical coordinates, it is given by
\[  \Phi(t)= \begin{pmatrix} \theta+\tau^2 & \tau^3 \\  1+\tau & \theta+\tau^2
\end{pmatrix}. \]
In \cite[Examples 5.4 \& 7.3]{maurischat2}, the second author showed furthermore, that a $\Fq(\theta)[t]$-basis $(e_1,e_2,e_3)$ of the motive $M(E)$ is given by $e_1=\tau\kappa_2$, $e_2=\kappa_2$, $e_3=\kappa_1$, and a $\Fq(\theta)[t]$-basis $(\de_1,\de_2,\de_3)$ of the comotive $N(E)$ is given by $\de_1=\dk_1\tau$, $\de_2=\dk_1$, $\de_3=\dk_2$.
Actually, the computations in \cite{maurischat2} didn't use any property of the base $\Fq(\theta)$, so they are valid over any $\Fq[t]$-algebra $R$, if we choose $\theta$ as the image of $t$ in $R$.

A straight forward, but tedious computation along the lines of the previous examples leads to 
\begin{align*}
 \Restau : \tau^*M(E)\otimes_{R[t]} N(E) &\to  R^\perf[t]dt, \\
 (\sum_{i} a_ie_i)\otimes (\sum_{j} b_j\de_j) & \mapsto  \begin{pmatrix} a_1^{(1)}, & a_2^{(1)},& a_3^{(1)}\end{pmatrix}\cdot 
 \begin{pmatrix}
 	1+g & 1 & -g  \\
 	1 & 0 & -1 \\
 	-g & -1 & g
 \end{pmatrix} 
 \cdot \begin{pmatrix} b_1\\ b_2\\ b_3 \end{pmatrix} dt
\end{align*}
where $g=\theta^q+\theta - 2t\in \mathbb{F}(\theta)[t]$.

Surprisingly, the image is even in $\mathbb{F}(\theta)[t]dt$ so no perfection is needed here.


\begin{thebibliography}{ABCD}

\bibitem[A]{anderson} G. W. Anderson, \emph{$t$-motives}, Duke Math. J. 53 (1986), 457–502.

\bibitem[ABP]{abp} Greg W. Anderson, W. Dale Brownawell, and Matthew A. Papanikolas. \emph{Determination of the algebraic relations among special $\Gamma$-values in positive characteristic}. Ann. of Math. (2), 160(1):237–313, 2004.

\bibitem[ANT]{angles-tavares-ribeiro} B. Angl\`es, T. Ngo Dac, F. Tavares Ribeiro, \emph{A class formula for admissible Anderson modules}, Invent. Math. 229, 563–606, 2022.

\bibitem[BP]{boeckle-pink} G. B\"ockle, R. Pink, \emph{Cohomological Theory of Crystals over Function Fields}, EMS Tracts in Math. 9 (2009).

\bibitem[B]{bourbaki} N. Bourbaki. 

\bibitem[BS]{bhatt-scholze} B. Bhatt, P. Scholze, \emph{Projectivity of the Witt vector affine Grassmannian}, Invent. Math. 209 (2017), no. 2, 329--423.

\bibitem[BrP]{rapid-intro} W. D. Brownawell, M. A. Papanikolas, \emph{A rapid introduction to Drinfeld modules, $t$-modules, and $t$-motives}, in $t$-Motives: Hodge Structures, Transcendence, and Other Motivic Aspects, European Mathematical Society, Z\"urich, 2020, pp. 3-30.

\bibitem[CG]{caruso-gazda} X. Caruso, Q. Gazda, \emph{Computation of classical and v-adic L-series of t-motives}, Res. number theory 11, 35 (2025).

\bibitem[F]{ferraro} G. H. Ferraro, \emph{A duality result about special functions for Drinfeld modules of arbitrary rank}, Res Math Sci 12, 23 (2025).

\bibitem[G]{gazda} Q. Gazda, \emph{On the integral part of $A$-motivic cohomology}, Compos. Math. 2024;160(8):1715-1783.

\bibitem[GKK]{gkk} F. G\l{}och, D.E. K\c{e}dzierski, P.Kraso\'n, \emph{Algorithms for determination of $t$-module structures on some extension groups}, arXiv:2408.08207, pp. 1-32.

\bibitem[H]{hartl} U. Hartl, \emph{Isogenies of abelian Anderson $A$-modules and $A$-motives}, Ann. Sc. Norm. Super. Pisa Cl. Sci. (5), XIX (2019), 1429–1470.

\bibitem[HJ]{hartl-juschka} U. Hartl and A.-K. Juschka, \emph{Pink’s theory of Hodge structures and the Hodge conjecture over
function fields}, in $t$-motives: Hodge structures, transcendence and other motivic aspects, EMS
Series of Congress Reports (European Mathematical Society, Z\"urich, 2020), 31–182.

\bibitem[KK]{kk} D.E. K\c{e}dzierski, P.Kraso\'n \emph{Weil-Barsotti formula for $t$-modules}, arXiv:2409.04029, pp.1-18.

\bibitem[KP]{kp} D.E. K\c{e}dzierski, P.Kraso\'n, \emph{On $\mathrm{Ext}^1$ for Drinfeld modules}, J. Number Theory, 256 (2024) pp.97-135.

\bibitem[Ma1]{maurischat} A. Maurischat, \emph{Abelian equals $A$-finite},  to appear in Ann. Inst. Fourier, \href{https://arxiv.org/abs/2110.11114}{arXiv:2110.11114} (2024).

\bibitem[Ma2]{maurischat2} A. Maurischat, \emph{Computation of bases of Anderson $t$-motives}, in preparation (2025).

\bibitem[Mo1]{mornev} M. Mornev, \emph{Tate modules of isocrystals and good reduction of Drinfeld modules}, Algebra \&
Number Theory, 15(4) 909-970 2021.

\bibitem[Mo2]{mornev-shtuka} M. Mornev, \emph{Shtuka cohomology and special values of Goss $L$-functions}, PhD thesis \href{https://arxiv.org/abs/1808.00839}{arXiv:1808.00839} (2018).

\bibitem[PR]{PR} M. A. Papanikolas, N. Ramachandran, \emph{A Weil--Barsotti formula for Drinfeld modules}, J. Number Theory 98(2) (2003), 407–431.

\bibitem[P]{poonen} B. Poonen, \emph{Fractional power series and pairings on drinfeld modules}, J. Amer. Math. Soc. 9(3), 783–812 (1996)

\bibitem[T1]{taelman-artin} L. Taelman, \emph{Artin $t$-motifs}, J. Number Theory, Volume 129, Issue 1 (2009), Pages 142--157.

\bibitem[T2]{taelman} L. Taelman, \emph{$1$-$t$-Motifs}, in $t$-motives: Hodge structures, transcendence and other motivic aspects. Z\"urich: European Mathematical Society (EMS). 417--439 (2020).

\bibitem[Wo]{woo} S.S. Woo, \emph{Extensions of Drinfeld modules of rank $2$ by the Carlitz module}, Bull. Korean Math. Soc. 32(2) (1995), 251–257.

\end{thebibliography}
\end{document}